\documentclass[12pt]{article}

\usepackage{amssymb,amsmath,amsthm}
\usepackage[shortlabels]{enumitem}
\usepackage{xcolor}

\addtolength{\textwidth}{2cm}
\addtolength{\textheight}{3cm}
\addtolength{\oddsidemargin}{-1cm}
\addtolength{\topmargin}{-2cm}

\newcommand{\Q}{\mathbb{Q}}

\newcommand{\Gbar}{\overline{G}}

\newcommand{\tst}{\textstyle}

\newcommand{\Lrightarrow}{\hbox to1cm{\rightarrowfill}}
\newcommand{\Ldownarrow}{\bigg\downarrow}

\newcommand{\Z}{\mathbb{Z}}
\newcommand{\F}{\mathbb{F}}
\newcommand{\FE}{\mathrm{FE}}
\newcommand{\U}{\mathcal{U}}
\newcommand{\kbar}{\overline{k}}
\newcommand{\Kbar}{\overline{K}}
\newcommand{\Lbar}{\overline{L}}
\newcommand{\Mbar}{\overline{M}}
\newcommand{\Pbar}{\overline{P}}
\newcommand{\psibar}{\overline{\psi}}

\newcommand{\OO}{\mathcal{O}}
\DeclareMathOperator{\ch}{char}
\DeclareMathOperator{\e}{e}
\DeclareMathOperator{\Gal}{Gal}
\DeclareMathOperator{\Hom}{Hom}
\DeclareMathOperator{\Aut}{Aut}
\DeclareMathOperator{\rank}{rank}
\DeclareMathOperator{\W}{W}
\DeclareMathOperator{\Frac}{Frac}
\newcommand{\Spec}{{\rm Spec}}
\newcommand{\Gt}{\widetilde{G}}
\newcommand{\At}{\widetilde{A}}

\newtheorem{theorem}{Theorem}
\newtheorem{lemma}[theorem]{Lemma}
\newtheorem{prop}[theorem]{Proposition}
\newtheorem{cor}[theorem]{Corollary}

\theoremstyle{definition}

\newtheorem{definition}[theorem]{Definition}

\numberwithin{equation}{section}
\numberwithin{theorem}{section}

\title{Hasse-Arf property and abelian extensions for local fields with imperfect residue fields}
\date{\today}

\begin{document}
\author{{Taichi Inoue}}
\maketitle

\begin{abstract}
For a finite totally ramified extension $L$ of a complete discrete valuation field $K$ with the perfect residue field of characteristic $p>0$, it is known that $L/K$ is an abelian extension if the upper ramification breaks are integers and if the wild inertia group is abelian. 
We prove a similar result without the assumption that the residue field is perfect. 
As an application, we prove a converse to the Hasse-Arf theorem for a complete discrete valuation field with the imperfect residue field. 
More precisely, for a complete discrete valuation field $K$ with the residue field $\Kbar$ of residue characteristic $p>2$ and a finite non-abelian Galois extension $L/K$ such that the Galois group of $L/K$ is equal to the inertia group $I$ of $L/K$, we construct a complete discrete valuation field $K'$ with the residue field $\Kbar$ and a finite Galois extension $L'/K'$ which has at least one non-integral upper ramification break and whose Galois group and inertia group are isomorphic to $I$. 
\end{abstract}

\section{Introduction}
Let $K$ be a complete discrete valuation field and let $L/K$ be a finite Galois extension.
We assume that the residue extension of $L/K$ is separable.
For a rational number $r\ge-1$, the lower ramification group $\Gal(L/K)_r$ and the (classical) upper ramification group $\Gal(L/K)^r$ are defined.
We say that $u\ge-1$ is an {\it upper ramification break} of $L/K$ if $\Gal(L/K)^{u+\epsilon}\subsetneq\Gal(L/K)^{u}$ for any $\epsilon>0$.

For a field $k$ of characteristic $p>0$, we put $\kappa(k)=\dim_{\F_p}k/\wp(k)$, where $\wp(X)=X^p-X$.
The following proposition is proved in \cite[Remark 1]{feshas}.

\begin{prop}[{\cite[Remark 1]{feshas}}]\label{HAP}
Let $K$ be a complete discrete valuation field with the perfect residue field $\Kbar$ of residue characteristic $p>0$.
We assume that $\kappa(\Kbar)\neq0$.
Let $L/K$ be a finite Galois extension, and let $M/K$ be the maximal tamely ramified subextension of $L/K$.
We assume that the Galois group $\Gal(L/K)$ of $L/K$ is equal to the inertia group $I(L/K)$ of $L/K$.
If $L/M$ is abelian and the upper ramification breaks of $L/K$ are all integers, then $L/K$ is abelian.
\end{prop}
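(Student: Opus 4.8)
The plan is to reduce the statement to a purely group-theoretic condition on the ramification filtration and then read that condition off from the integrality of the upper breaks. Since $\Gal(L/K)=I(L/K)$ and $\Kbar$ is perfect, $L/K$ is totally ramified. Write $G=\Gal(L/K)$, let $P=G_1$ be the wild inertia group, and recall that $P=\Gal(L/M)$ is the $p$-Sylow subgroup of $G$, normal in $G$, with quotient $C:=G/P=\Gal(M/K)$ cyclic of order $e$ prime to $p$. By hypothesis $P$ is abelian. I claim it suffices to show that $C$ acts trivially on $P$ by conjugation: this forces $P\subseteq Z(G)$, and since $G/P\cong C$ is cyclic, $G/Z(G)$ is then cyclic, whence $G$ is abelian.

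Next I would invoke the classical description of the graded pieces of the ramification filtration (Serre, \emph{Corps locaux}, Ch.~IV). The tame character $\theta_0\colon C\hookrightarrow\kbar^\times$ is injective with image $\mu_e$, and for each $i\ge1$ there is an injection $\theta_i\colon G_i/G_{i+1}\hookrightarrow\kbar$ into the additive group, compatible with conjugation in the sense that $\theta_i(\sigma\tau\sigma^{-1})=\theta_0(\sigma)^i\,\theta_i(\tau)$ for $\sigma\in G$ and $\tau\in G_i$. Thus $C$ acts on the graded piece $G_i/G_{i+1}$ through the character $\theta_0^{\,i}$, which is trivial precisely when $e\mid i$. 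Because $|C|=e$ is prime to $p$ while $P$ is an abelian $p$-group, a coprime-action argument shows that $C$ acts trivially on $P$ if and only if it acts trivially on every graded piece $G_i/G_{i+1}$: a coprime action that is trivial on all subquotients of a $C$-stable filtration is simultaneously unipotent and semisimple, hence trivial. The nonzero graded pieces occur exactly at the lower ramification breaks, so the problem is reduced to proving that every lower break $i$ of $L/K$ satisfies $e\mid i$.

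Finally I would extract this divisibility from the integrality of the upper breaks via the Herbrand function. Since $M/K$ is tame of degree $e$, the derivative of $\varphi_{L/K}$ equals $1/[G_0:G_u]=1/\bigl(e\,[G_1:G_u]\bigr)$, with $[G_1:G_u]$ a power of $p$, and is constant between consecutive lower breaks. Listing the lower breaks as $1\le i_1<i_2<\cdots$, the first upper break is $b_1=\varphi_{L/K}(i_1)=i_1/e$, and the successive differences are $b_{k+1}-b_k=(i_{k+1}-i_k)/(e\,p^{d_k})$ with $p^{d_k}=[G_1:G_{i_k+1}]$. If all $b_k$ are integers, then $e\mid i_1$ from the base case, and inductively $e\mid(i_{k+1}-i_k)$, hence $e\mid i_{k+1}$; so every lower break is divisible by $e$, which by the previous paragraph completes the proof.

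The only genuinely nontrivial ingredients are the conjugation formula for $\theta_i$ and the coprime-action triviality lemma, both of which are standard for a perfect residue field, so I expect no real obstacle here. The difficulty that the paper must confront is precisely the passage to imperfect residue fields, where the clean embeddings $\theta_i$ into the residue field and the single tame character $\theta_0$ have to be replaced by more delicate substitutes; the hypothesis $\kappa(\Kbar)\neq0$ plays no role in this particular deduction beyond guaranteeing that wild ramification can occur at all.
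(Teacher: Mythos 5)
Your proposal is correct, but it is genuinely different from what the paper does, because the paper contains no proof of Proposition \ref{HAP} at all: the statement is imported as a black box from Fesenko \cite{feshas}, and the author's own work in Section \ref{gene1} consists of removing the hypothesis $\kappa(\Kbar)\neq0$ by descending $L/K$ to a complete subfield whose residue field has $\kappa\neq0$ (Lemmas \ref{alg-ind}--\ref{subex} and Proposition \ref{kappa}), still invoking Fesenko's result at the end. Your argument, by contrast, is a direct classical one: the conjugation formula $\theta_i(\sigma\tau\sigma^{-1})=\theta_0(\sigma)^i\theta_i(\tau)$ is \cite[IV, \S 2, Proposition 9]{ser} and needs only that the residue extension be separable (perfectness enters solely to deduce total ramification from $\Gal(L/K)=I(L/K)$); the Herbrand bookkeeping $b_1=i_1/e$, $b_{k+1}-b_k=(i_{k+1}-i_k)/(e\,p^{d_k})$ correctly turns integrality of the upper breaks into $e\mid i_k$ for every wild lower break; and triviality on all graded pieces plus coprimality then kills the $C$-action on $P$, so $P\subseteq Z(G)$ and $G/Z(G)$ is cyclic, whence $G$ is abelian. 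The one step you should state with a proper citation rather than the slogan ``simultaneously unipotent and semisimple'' is the stability lemma: either use the fact that the stability group of a chain of subgroups of a finite $p$-group is a $p$-group \cite[Theorem 5.3.2]{fingro}, or argue via the coprime-action identity $[P,C,C]=[P,C]$, which with $[G_{i_k},C]\subseteq G_{i_k+1}$ pushes $[P,C]$ down the chain to $1$. What each route buys: Fesenko's method (and hence the paper's) needs $\kappa(\Kbar)\neq0$ to have enough cyclic $p$-extensions of the residue field to test against, forcing the paper's nontrivial descent when $\kappa(\Kbar)=0$; your argument never uses $\kappa(\Kbar)\neq0$ --- consistent with Proposition \ref{kappa}, which shows the hypothesis is indeed superfluous --- and, if checked carefully (each ingredient is standard, and I find no gap), it would subsume the entire reduction of Section \ref{gene1}, since it proves Proposition \ref{kappa} directly for every perfect residue field.
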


In \cite[Remark 3.5]{abbes-saito}, the upper numbering ramification group $\Gal(L/K)^r$  for a rational number $r\ge0$ is defined without the assumption that the residue extension of $L/K$ is separable.
In order to avoid confusion, we call them {\it the non-log upper ramification groups} and denote them by $\Gal(L/K)_{\text{n-log}}^r$.
We say that $u\ge0$ is a {\it non-log upper ramification break} of $L/K$ if $\Gal(L/K)_{\text{n-log}}^{u+\epsilon}\subsetneq\Gal(L/K)_{\text{n-log}}^{u}$ for any $\epsilon>0$.
In this article, we generalize Proposition \ref{HAP} to the case admitting $\kappa(\Kbar)=0$ or $\Kbar$ to be imperfect.
More precisely, we prove the following theorem.

\begin{theorem}[Theorem \ref{imperfect}]\label{mainth}
Let $K$ be a complete discrete valuation field with the residue field $\Kbar$, which is not necessarily perfect, of residue characteristic $p>0$.
Let $L/K$ be a finite Galois extension, and let $M/K$ be the maximal tamely ramified subextension of $L/K$.
We assume that the Galois group $\Gal(L/K)$ of $L/K$ is equal to the inertia group $I(L/K)$ of $L/K$.
If $L/M$ is abelian and the non-log upper ramification breaks of $L/K$ are all integers, then $L/K$ is abelian.
\end{theorem}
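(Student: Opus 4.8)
The plan is to follow the architecture of Proposition \ref{HAP}, replacing classical ramification theory by the Abbes--Saito non-log theory. Write $G=\Gal(L/K)$, $P=\Gal(L/M)$ and $C=\Gal(M/K)$. Since $L/K$ is totally ramified with $\Gal(L/K)=I(L/K)$ and $M/K$ is its maximal tamely ramified subextension, $C$ is cyclic of order $e$ prime to $p$, and $P$ is the wild inertia, a normal $p$-subgroup, abelian by hypothesis. The first step is the purely group-theoretic reduction: I claim $G$ is abelian if and only if the conjugation action of $C$ on $P$ is trivial, i.e. $P$ is central. One direction is clear; conversely, if $P$ is central then choosing a lift $g_0\in G$ of a generator of the cyclic group $C$ gives $G=\langle P,g_0\rangle$, and since $P$ is abelian and central, all generators commute, so $G$ is abelian. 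Thus it suffices to prove that $C$ acts trivially on $P$.

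Next I would reduce this to the graded pieces of the ramification filtration. After a residually separable (hence $e=1$) base change I may assume $\mu_e\subset K$; this is harmless because the non-log ramification groups, the Galois group, and the non-log breaks are preserved under such a base change. The non-log upper ramification groups $\Gal(L/K)^{u}_{\text{n-log}}$ are normal in $G$, and for $u>0$ they lie in $P$; hence they form a $C$-stable filtration of $P$ whose jumps occur exactly at the non-log upper breaks, which are integers by hypothesis. Because $e=|C|$ is prime to $p$, the module $P$ over the group ring of $C$ is semisimple, so any filtration by $C$-submodules splits $C$-equivariantly; consequently $C$ acts trivially on $P$ if and only if it acts trivially on each graded piece $\mathrm{gr}^{u}P=\Gal(L/K)^{u}_{\text{n-log}}/\Gal(L/K)^{u+}_{\text{n-log}}$.

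The heart of the argument is the computation of the $C$-action on $\mathrm{gr}^{u}P$ for an integral non-log break $u$. Here I would invoke the structure theory of the non-log Abbes--Saito filtration: there is a canonical $C$-equivariant injection of $\mathrm{gr}^{u}P$ into a twist of the form $\bigl(\mathfrak{m}_K^{u}/\mathfrak{m}_K^{u+}\bigr)\otimes_{\OO_K}\Omega$, where $\Omega$ is built from $\Omega^1_{\OO_K}$ (this differential contribution is exactly what replaces the one-dimensional cotangent line $\mathfrak{m}_L^{i}/\mathfrak{m}_L^{i+1}$ of the perfect-residue-field case). The point is that when $u$ is an \emph{integer}, the exponent of the cotangent twist is a multiple of $e$, so $\mathfrak{m}_K^{u}/\mathfrak{m}_K^{u+}$ is rational over $K$; since a generator of $C=\Gal(M/K)$ acts trivially on $\OO_K$ and hence on $\Omega^1_{\OO_K}$ and on $\mathfrak{m}_K^{u}/\mathfrak{m}_K^{u+}$, it fixes the entire target, and by equivariance it acts trivially on $\mathrm{gr}^{u}P$. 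This is the non-log incarnation of the classical mechanism, where the tame generator acts on $\mathrm{gr}_i$ by $\zeta^{i}$ and integrality of the upper break forces $e\mid i$, hence $\zeta^{i}=1$. Combining the three steps yields that $C$ acts trivially on $P$, so $G$ is abelian.

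The main obstacle is the third step. What must be done carefully is to set up the equivariant refined-ramification map (the characteristic form / refined Swan conductor) in the non-log Abbes--Saito theory for imperfect $\Kbar$, and to verify two things: that it is genuinely $\Gal(M/K)$-equivariant for the conjugation action, and that an integral \emph{non-log} upper break translates, through the appropriate Herbrand normalization, into a cotangent twist whose exponent is divisible by $e$ and is therefore tame-invariant. Tracking the differential factor $\Omega^1$ and pinning down the precise non-log normalization (so that integrality genuinely forces $K$-rationality of the twist, rather than the weaker statement over $M$) is the delicate part; the surrounding group theory and the reduction to graded pieces are routine by comparison.
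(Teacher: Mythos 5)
Your steps 1 and 2 are essentially sound (modulo two slips: $L/K$ need not be totally ramified here, since $\Gal(L/K)=I(L/K)$ only forces the residue extension to be purely inseparable --- though that, with tameness, is what makes $M/K$ totally ramified and $C=\Gal(M/K)$ cyclic, cf.\ Lemma \ref{tr-cyclic}; and in the non-log normalization the tame break sits at $u=1$, with the wild groups being $\Gal(L/K)_{\text{n-log}}^u$ for $u>1$). The genuine gap is your step 3, and you have in effect conceded it yourself: the existence of an \emph{injective}, $\Gal(M/K)$-\emph{equivariant} characteristic form on $\mathrm{gr}^u$ of the non-log filtration, with a normalization under which an integral non-log break produces an inertia-invariant twist $(\mathfrak{m}^u/\mathfrak{m}^{u+})\otimes\Omega$, is not a verification to be ``done carefully'' at the end --- it is the entire difficulty of the theorem in the imperfect-residue setting, and you cite no source supplying all three ingredients (injectivity, equivariance for conjugation by tame lifts, and the precise non-log normalization) at once. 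Worse, the structure theory of the non-log graded quotients that you would need to invoke is established in \cite{graded} precisely by base-changing along a tangentially dominant morphism to a complete discrete valuation field with perfect residue field; so the only known route to your key lemma already contains, as a subroutine, the reduction that constitutes the paper's \emph{entire} proof. The paper never touches characteristic forms: it takes a tangentially dominant $\OO_K\to\OO_{K'}$ with $\overline{K'}$ perfect (Lemma \ref{tan-dom}, from \cite[PROPOSITION 1.1.10]{graded}), uses \cite[COROLLARY 4.2.6]{graded} to show $L'=LK'$ is totally ramified Galois with $\Gal(L'/K')\cong\Gal(L/K)$, $\Gal(L'/M')\cong\Gal(L/M)$, and non-log breaks $u$ of $L/K$ matching classical upper breaks $u-1$ of $L'/K'$ (Lemma \ref{reduce2}), and then quotes the perfect-residue statement (Proposition \ref{kappa}, i.e.\ Fesenko's result \cite{feshas} extended in Section 2 to $\kappa(\Kbar)=0$).

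Two further cautions. First, even your model computation in the perfect-residue case is glossed: there the tame generator acts on the \emph{lower}-numbering graded piece $\mathrm{gr}_i$ by $\zeta^i$, and converting integrality of the upper breaks into $e\mid i$ is an induction through the Herbrand function (successive lower breaks satisfy $i_{j+1}=i_j+p^{a_j}(w_{j+1}-w_j)$ with $w_j$ the upper breaks of $L/M$ and $e\mid w_j$), not an immediate ``exponent divisible by $e$'' statement; your upper-numbering shortcut avoids this induction exactly at the price of the unproven equivariant form. Second, your unramified base change to arrange $\mu_e\subset K$ also deserves a citation for compatibility of the Abbes--Saito filtration, though this is comparatively harmless. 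If step 3 could be supplied rigorously, your argument would be a genuinely different and in one respect stronger route --- it never passes through the perfect-residue case, so it would bypass Fesenko's $\kappa(\Kbar)\neq0$ hypothesis and the paper's Section 2 entirely --- but as written it is a plan whose core is missing.
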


In Proposition \ref{HAP} and Theorem \ref{mainth}, the assumption that the Galois group $\Gal(L/K)$ of $L/K$ is equal to the inertia group $I(L/K)$ of $L/K$ is equivalent to that the residue extension of $L/K$ is purely inseparable.
Namely, in Proposition \ref{HAP}, $L/K$ is assumed to be totally ramified, since $\Kbar$ is perfect.
In Theorem \ref{mainth}, $L/K$ is not necessarily totally ramified.

We prove Theorem \ref{mainth} by using a tangentially dominant morphism defined in \cite[Definition 1.1.6]{graded}.
In Lemma \ref{tan-dom}, it follows from \cite[PROPOSITION 1.1.10]{graded} that there exists a complete discrete valuation field $K'$ with the perfect residue field of residue characteristic $p$ and there exists a tangentially dominant morphism $\OO_K\to \OO_{K'}$ of discrete valuation rings.
Then we can reduce the proof to the case the residue field $\Kbar$ is perfect by using \cite[COROLLARY 4.2.6]{graded}.

\vspace{\baselineskip}
In \cite{converse}, a converse to the Hasse-Arf theorem for a complete discrete valuation field with the perfect residue field is proved by using Proposition \ref{HAP}.
More precisely, the following theorem is proved in \cite[Remark 6.4]{converse} using Proposition \ref{HAP}.
\begin{theorem}[{\cite[Theorem 6.3]{converse}}]\label{converse-hasse}
Let $K$ be a complete discrete valuation field with the perfect residue field $\Kbar$ of residue characteristic $p>2$ and let $L/K$ be a finite Galois extension. 
We assume that the Galois group $G=\Gal(L/K)$ of $L/K$ is non-abelian and that $G$ is equal to the inertia group $I(L/K)$ of $L/K$.
Then there exist a complete discrete valuation field $K'$ with the residue field $\Kbar$ and a $G$-extension $L'/K'$ such that the Galois group $G=\Gal(L'/K')$ is equal to the inertia group $I(L'/K')$ of $L'/K'$, and that $L'/K'$ has a non-integral upper ramification break.
\end{theorem}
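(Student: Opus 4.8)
The plan is to deduce the theorem from the contrapositive of Proposition \ref{HAP}. Once we realize $G$ over a complete discrete valuation field $K'$ with residue field $\Kbar$ in such a way that the wild part $L'/M'$ is abelian, the non-abelianness of $G$ will force, via Proposition \ref{HAP}, a non-integer upper ramification break. Since $\Kbar$ is perfect, every totally ramified $L'/K'$ automatically satisfies $\Gal(L'/K')=I(L'/K')$, so it suffices to produce a \emph{totally ramified} realization whose wild inertia is abelian while the global group stays non-abelian. One should also arrange $\kappa(\Kbar)\neq0$ (which is harmless, e.g.\ by taking $K'$ of equal characteristic) so that Proposition \ref{HAP} is applicable.

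First I would reduce to the case of abelian wild inertia. Write $P\triangleleft G$ for the normal $p$-Sylow subgroup and $e_0=[G:P]$, which is prime to $p$; then $[P,P]$ is characteristic in $P$, hence normal in $G$, and $\Gbar=G/[P,P]$ has abelian $p$-Sylow $P/[P,P]$. The key structural point is that the upper numbering is compatible with quotients, $(\Gal(L'/K')/N)^{u}=\Gal(L'/K')^{u}N/N$, so that a non-integer upper break of any quotient extension is automatically a non-integer upper break of $L'/K'$ itself. Hence, whenever $\Gbar$ is still non-abelian, it suffices to realize $G$ and then produce the break on the $\Gbar$-quotient, where the wild inertia $P/[P,P]$ is abelian. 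A short commutator computation (using that the extension $1\to P\to G\to G/P\to1$ splits by Schur--Zassenhaus) shows that $\Gbar$ is non-abelian precisely when $[G,G]\supsetneq[P,P]$, equivalently when the tame quotient $G/P$ acts non-trivially on $P/[P,P]$.

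In this case I would carry out the construction explicitly over $K'=\Kbar((t))$. Since $e_0$ is prime to $p$ and $\mu_{e_0}\subset\Kbar$, the tame extension $M'=K'(t^{1/e_0})$ has $\Gal(M'/K')$ cyclic of order $e_0$; on top of it I would build $L'/M'$ as an abelian $p$-extension by Artin--Schreier--Witt theory, choosing the defining Witt vector so that the $\Gal(M'/K')$-action on $\Gal(L'/M')$ realizes the prescribed semidirect product (giving $\Gal(L'/K')\cong\Gbar$) and so that the wild break is coprime to $e_0$. For a tame base of degree $e_0$ the Herbrand function is multiplication by $e_0$ on the wild range, so a wild break not divisible by $e_0$ descends to a non-integer upper break of $L'/K'$; the content of Proposition \ref{HAP} is exactly that such a break cannot occur for an abelian extension. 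Thus, $L'/M'$ being abelian and the realized group being non-abelian, Proposition \ref{HAP} forces at least one non-integer upper break, settling this case.

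The main obstacle is the remaining case $[G,G]=[P,P]$, in which the tame quotient acts trivially on $P/[P,P]$ and the non-abelianness of $G$ lives entirely inside the wild inertia $P$ — most starkly when $G$ is a non-abelian $p$-group, so that $M'=K'$ and there is no tame part at all. Here $L'/M'$ can never be made abelian, and neither the quotient reduction nor Proposition \ref{HAP} applies, so one must instead produce the non-integer break by a direct ramification-theoretic analysis of a non-abelian wild extension, tracking how the lower-numbering break of a commutator $[g,h]$ is pushed off the integer lattice. This is where I expect the genuine work to lie, and where the hypothesis $p>2$ should be used; having produced one such totally wild model with a non-integer break, the quotient-compatibility of the upper numbering again exhibits the required non-integer break for the full $G$-extension.
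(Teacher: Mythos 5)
Your opening reduction and your ``nontrivial tame action'' case do align with the paper: the proof of the analogous Theorem \ref{converse2} splits according to whether the action $\psi\colon C_m\to\Aut(P)$ is trivial, and in the nontrivial case it passes to the Frattini quotient $\Pbar=P/\Phi(P)$ (equivalent to your $P/[P,P]$ reduction, by Burnside's basis theorem a $p'$-action is trivial on $P$ iff it is trivial on either quotient) and invokes the contrapositive of the Hasse--Arf property together with quotient-compatibility of the upper numbering. Two corrections there, however. First, in this case no fresh construction is needed, and your construction as written does not deliver the theorem: you only realize $\Gbar$ over $\Kbar((t))$, whereas the conclusion demands a $G$-extension, so you would still owe a solution of the embedding problem $G\twoheadrightarrow\Gbar$ with control of inertia; the paper avoids this entirely by observing that the \emph{given} $L/K$ already has a non-integral break, since $M=L^{\Phi(P)}$ is a non-abelian extension with abelian wild part and $\Gal(M/K)=I(M/K)$. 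Second, you cannot ``arrange $\kappa(\Kbar)\neq0$'' by choosing $K'$ of equal characteristic: $\kappa(\Kbar)$ depends only on $\Kbar$, which the theorem fixes. The paper handles $\kappa(\Kbar)=0$ by proving Proposition \ref{kappa}, which removes that hypothesis from Proposition \ref{HAP}; without it your appeal to Proposition \ref{HAP} is illegitimate when, say, $\Kbar$ is algebraically closed.

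The genuine gap is the case you explicitly defer: trivial action, in particular $G$ a non-abelian $p$-group. This is not a residual case but the heart of the theorem (already for $G=H(1,1)$, the Heisenberg group), and here the Hasse--Arf contrapositive is structurally useless: for a $p$-group every quotient with abelian wild part is abelian, so the non-integral break must be produced by an explicit construction, and this is where essentially all of the work in the paper's Section \ref{lemma} (following Elder--Keating) lies. Concretely: Lemma \ref{H(1,1)} builds, from Artin--Schreier generators $y^p-y=\beta$ and $x^p-x=\alpha$ with $v_K(\beta)=-b$, $v_K(\alpha)=-a$, $p\nmid b$, $a\not\equiv0,-b\pmod p$, a third generator $z^p-z=\alpha y+\gamma\alpha\beta$ yielding an $H(1,1)$-extension whose largest upper break is $a+p^{-1}b\notin\Z$; Lemma \ref{composite} controls composites via disjoint multisets of breaks; Lemma \ref{minimal} propagates the break to all minimal non-abelian $p$-groups $H(n,d)$ and $A(n,d)$, whose classification (\cite[Proposition 4.2]{converse}) is where $p>2$ is actually used, confirming your guess; and Lemma \ref{embed} (via Witt's theorem in characteristic $p$) solves the embedding problem $\Gt\twoheadrightarrow\Gbar$ while keeping Galois group equal to inertia, giving Theorem \ref{p-group}; finally the tame part is glued back by a Kummer extension of $\Kbar((t))$, with $\zeta_m\in\Kbar$ supplied by Lemma \ref{tr-cyclic}. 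Your proposal offers no substitute for this chain --- ``tracking how the break of a commutator is pushed off the integer lattice'' is exactly the open problem, not a proof --- so the main case, and with it the theorem, remains unproved.
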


As an application of Theorem \ref{mainth}, we prove a similar theorem without the assumption that the residue field is perfect.

\begin{theorem}[Theorem \ref{converse2}]\label{converse-hasse-imp}
Let $K$ be a complete discrete valuation field with the residue field $\Kbar$ of residue characteristic $p>2$ and let $L/K$ be a finite Galois extension. 
We assume that the Galois group $G=\Gal(L/K)$ of $L/K$ is non-abelian and that $G$ is equal to the inertia group $I(L/K)$ of $L/K$.
Then there exist a complete discrete valuation field $K'$ with the residue field $\Kbar$ and a $G$-extension $L'/K'$ such that the Galois group $G=\Gal(L'/K')$ is equal to the inertia group $I(L'/K')$ of $L'/K'$, and that $L'/K'$ has a non-integral non-log upper ramification break.
\end{theorem}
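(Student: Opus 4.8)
The plan is to construct a complete discrete valuation field $K'$ with residue field $\Kbar$ and a Galois extension $L'/K'$ with $\Gal(L'/K')\cong G$, $I(L'/K')=\Gal(L'/K')$, and a non-integral non-log upper ramification break. Write $G=P\rtimes C$, where $P=\Gal(L/M)$ is the wild inertia (the normal $p$-Sylow subgroup of $G=I(L/K)$) and $C=G/P$ is the tame quotient. If the given extension $L/K$ already has a non-integral non-log upper break, we are done with $K'=K$ and $L'=L$. This is automatic when $P$ is abelian: then $L/M$ is abelian, and since $L/K$ is non-abelian with $G=I(L/K)$, the contrapositive of Theorem \ref{mainth} shows that the non-log upper breaks of $L/K$ cannot all be integers. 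This is the direct application of Theorem \ref{mainth}, and it disposes of the case of abelian wild inertia.

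It remains to treat the case in which $P$ is non-abelian. Here Theorem \ref{mainth} cannot be applied to $L/K$ directly, because for every realization of $G$ with $G=I$ the wild part $L'/M'$ has Galois group $P$ and is therefore non-abelian. Instead I would reproduce the explicit construction underlying the perfect residue field converse, \cite[Theorem 6.3]{converse}, over a base with residue field $\Kbar$. That construction produces, over a complete discrete valuation field with perfect residue field of characteristic $p$, a $G$-extension with $G=I$ and a non-integral (classical) upper ramification break by means of explicit tame and wild defining equations. Choosing a complete discrete valuation field $K'$ with residue field $\Kbar$ (for instance $\Kbar((T))$) and writing the same defining data over $K'$, one obtains a Galois extension $L'/K'$ with $\Gal(L'/K')\cong G$ and $I(L'/K')=\Gal(L'/K')$.

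To certify that $L'/K'$ has a non-integral non-log upper break I would reduce to the perfect residue field case exactly as in the proof of Theorem \ref{mainth}. By Lemma \ref{tan-dom} there exist a complete discrete valuation field $K''$ with perfect residue field and a tangentially dominant morphism $\OO_{K'}\to\OO_{K''}$; base changing $L'/K'$ along it yields $L''/K''$, which by compatibility of the construction with this morphism is the extension of \cite{converse} over the perfect residue field of $K''$. The break computation there gives $L''/K''$ a non-integral classical upper break, hence a non-integral non-log upper break, since over a perfect residue field the non-log and classical (log) upper-numbering filtrations differ by the translation $r\mapsto r+1$, so that a non-integral classical break corresponds to a non-integral non-log break (cf. \cite{abbes-saito}). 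Finally, \cite[Corollary 4.2.6]{graded} shows that the non-log ramification filtration of $L'/K'$ is carried to that of $L''/K''$ by the tangentially dominant morphism, so $L'/K'$ itself has a non-integral non-log upper break.

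The step I expect to be the main obstacle is the descent of the construction to the imperfect residue field $\Kbar$ in the non-abelian wild inertia case: one must realize the construction of \cite{converse} over a base with residue field exactly $\Kbar$ while preserving $\Gal(L'/K')\cong G=I(L'/K')$, and arrange it to be compatible with a tangentially dominant morphism to a perfect-residue model so that \cite[Corollary 4.2.6]{graded} transports the break. Verifying tangential dominance of the induced morphism of valuation rings and controlling the non-log filtration through it are the delicate points; by contrast, the comparison between the log and non-log breaks over the perfect residue field is routine.
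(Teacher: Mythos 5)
Your Case A (abelian wild inertia $P$) is correct and coincides with a special case of the paper's argument, but your case split is drawn in the wrong place, and this creates a genuine gap. The paper splits not on whether $P$ is abelian but on whether the tame action $\psi\colon C_m\to\Aut(P)$ in $G\cong P\rtimes_{\psi}C_m$ is trivial. When $\psi$ is nontrivial --- which includes many groups with non-abelian $P$ --- no construction is needed at all: taking the Frattini subgroup $\Phi(P)$ and $M=L^{\Phi(P)}$, the quotient $\Gal(M/K)\cong(P/\Phi(P))\rtimes_{\psibar}C_m$ is still non-abelian (the induced action $\psibar$ is nontrivial because the kernel of $\Aut(P)\to\Aut(P/\Phi(P))$ is a $p$-group while $p\nmid m$, \cite[Chapter 5 Theorem 1.4]{fingro}), it satisfies $\Gal(M/K)=I(M/K)$, and its wild part $P/\Phi(P)$ is elementary abelian; so the contrapositive of Theorem \ref{mainth} applies to $M/K$ and yields a non-integral non-log break of $M/K$, hence of $L/K$, and one takes $K'=K$, $L'=L$. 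Your assertion that once $P$ is non-abelian ``Theorem \ref{mainth} cannot be applied'' is true only for $L/K$ itself; it can be applied to a quotient extension, and this is the key idea your proposal misses. It also matters for feasibility of your Case B: in the nontrivial-$\psi$ case there is no explicit construction in \cite{converse} to transplant (that case is handled there, as here, through the Hasse-Arf property result), so ``writing the same defining data over $K'$'' has nothing to attach to.

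After the Frattini reduction, the only case requiring a construction is $\psi$ trivial, i.e.\ $G\cong P\times C_m$ with $P$ a non-abelian $p$-group --- and there your transplantation plan names precisely the hard part and leaves it unargued. Over an imperfect residue field the paper must do real work that does not follow from repeating defining equations: (a) control composites of wildly ramified extensions via the multisets of breaks (Lemma \ref{composite}); (b) solve the Witt embedding problem while preserving $G=I$ --- since an imperfect $\Kbar$ can admit separable $p$-extensions, a solution $M/L$ could a priori be unramified, and Lemma \ref{embed} excludes this by showing it would force the group extension to split; (c) make the tame part Galois: $K'(t^{1/m})/K'$ is cyclic only if $\zeta_m\in\Kbar$, which the paper extracts from the existence of the given totally ramified $C_m$-subextension of $L/K$ via Lemma \ref{tr-cyclic}; (d) certify breaks through the tangentially dominant base change at each stage of the construction (Lemma \ref{tan-dom} and Lemma \ref{reduce2}, culminating in Theorem \ref{p-group}), not only once at the end. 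Your closing reduction via \cite[COROLLARY 4.2.6]{graded} is indeed the paper's tool (it is Lemma \ref{reduce2}), and your log/non-log translation $r\mapsto r+1$ over perfect residue fields is correct; but items (a)--(c) are genuine obstructions your proposal does not address, and, as you yourself flag, the descent to the imperfect residue field is the main obstacle --- one that in the nontrivial-$\psi$ subcase should be avoided by the Frattini argument rather than attacked by transplantation.
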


\cite[Theorem 6.5]{converse} shows that there exists a $G$-extension of fields of characteristic $0$ which satisfies the conclusion of Theorem \ref{converse-hasse}.
We can prove a similar theorem without the assumption that the residue field is perfect.

\begin{theorem}[Theorem \ref{converse3}]\label{converse-hasse-imp-mix}
Let $K$ be a complete discrete valuation field with the residue field $\Kbar$ of residue characteristic $p>2$ and let $L/K$ be a finite Galois extension. 
We assume that the Galois group $G=\Gal(L/K)$ of $L/K$ is non-abelian and that $G$ is equal to the inertia group $I(L/K)$ of $L/K$.
Then there exist a complete discrete valuation field $K'$ of characteristic $0$ with the residue field $\Kbar$ and a $G$-extension $L'/K'$ such that the Galois group $G=\Gal(L'/K')$ is equal to the inertia group $I(L'/K')$ of $L'/K'$, and that $L'/K'$ has a non-integral non-log upper ramification break.
\end{theorem}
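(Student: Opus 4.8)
The plan is to imitate the proof of Theorem~\ref{converse-hasse-imp} line by line, replacing its perfect-residue-field input \cite[Theorem 6.3]{converse} by the characteristic-$0$ input \cite[Theorem 6.5]{converse}, and to verify at each stage that the base field stays of characteristic $0$. As in the proof of Theorem~\ref{mainth}, the bridge between the prescribed, possibly imperfect residue field $\Kbar$ and a perfect one will be a tangentially dominant morphism (Lemma~\ref{tan-dom}), along which \cite[COROLLARY 4.2.6]{graded} preserves the non-log upper ramification breaks together with the Galois and inertia groups; Theorem~\ref{mainth} itself furnishes the imperfect-residue-field Hasse--Arf property that legitimizes reading off the break over $\Kbar$, in the same way Proposition~\ref{HAP} was used over a perfect residue field in \cite{converse}.

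Concretely, I would first construct the base. Since $\Kbar$ has characteristic $p$, take $\OO_{K'}$ to be a Cohen ring of $\Kbar$, a complete discrete valuation ring of characteristic $0$ that is absolutely unramified with residue field exactly $\Kbar$; the tame quotient $G/P$, where $P$ denotes the Sylow $p$-subgroup of $G$, is then realized by a tame totally ramified extension adjoining a root of a uniformizer, an operation that does not enlarge the residue field. Over the resulting $K'$ I would write down a $G$-extension $L'/K'$ following the recipe of \cite[Theorem 6.5]{converse}, arranging the defining data to have coefficients in $\OO_{K'}$ so that $\Gal(L'/K')=I(L'/K')=G$.

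Next I would apply Lemma~\ref{tan-dom} to $\OO_{K'}$ to obtain a characteristic-$0$ complete discrete valuation field $K''$ with perfect residue field and a tangentially dominant morphism $\OO_{K'}\to\OO_{K''}$, and I would base change to get $L''/K''$. By \cite[COROLLARY 4.2.6]{graded} this base change preserves $G$, the inertia group, and the non-log upper breaks, while over the perfect residue field of $K''$ these non-log breaks agree with the classical upper breaks of the now totally ramified extension. Since the construction is modeled on \cite[Theorem 6.5]{converse}, the extension $L''/K''$ is a $G$-extension with $G=I$ possessing a non-integral upper break; transporting this equality of break sets back along the morphism shows that $L'/K'$ has a non-integral non-log upper ramification break, as required. (Equivalently, whenever $P$ is abelian so that $L'/M'$ is abelian for the maximal tame subextension $M'$, the contrapositive of Theorem~\ref{mainth} yields the non-integral break directly from the non-abelianness of $G$.)

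The hard part will be the two compatibilities on which the transport rests. First, I must check that Lemma~\ref{tan-dom} can be applied so as to keep $K''$ of characteristic $0$---equivalently, so as to preserve the absolute ramification---so that \cite[Theorem 6.5]{converse}, and not its equal-characteristic counterpart, is the legitimate model over the perfect residue field. Second, I must ensure the base change along the tangentially dominant morphism does not degenerate: that $L''/K''$ remains a field, stays a $G$-extension with full inertia, and inherits precisely the break whose non-integrality \cite[Theorem 6.5]{converse} guarantees. Once these are established, the characteristic-$0$ conclusion for $K'$ follows formally, exactly as the equal-characteristic statement followed in Theorem~\ref{converse-hasse-imp}.
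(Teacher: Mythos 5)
There is a genuine gap, and it sits exactly where you flagged ``the hard part'': your base field cannot carry the extension you need, and you have no tool to produce the extension in mixed characteristic. Taking $\OO_{K'}$ to be a Cohen ring forces $K'$ to be absolutely unramified, and a tame extension adjoining a root of a uniformizer only multiplies the absolute ramification index by an integer $m$ prime to $p$, so $\e(K')$ stays bounded by $m$. But in mixed characteristic every $C_p$-subquotient of a Galois extension of $K'$ has classical upper ramification break at most $p\,\e(K')/(p-1)$, whereas the non-integral breaks produced by the $H(n,d)$ and $A(n,d)$ constructions have the shape $a+p^{-1}b$ with integers $a>b\ge1$, which grow without bound and in particular exceed $p/(p-1)<2$ already for the smallest admissible choice. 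So ``writing down a $G$-extension $L'/K'$ following the recipe of \cite[Theorem 6.5]{converse}'' over your $K'$ is impossible, not merely unverified; moreover that recipe rests on characteristic-$p$ tools (Artin--Schreier equations as in \cite[III Proposition (2.5)]{fesvos} and Witt's embedding theorem \cite{witK}, used throughout Section \ref{lemma}), which have no direct analogue over a mixed-characteristic base, with perfect residue field or not. The tangentially dominant morphism of Lemma \ref{tan-dom} does not repair this: as in Lemmas \ref{H(1,1)} and \ref{minimal} it is a device for \emph{computing} breaks of an extension you already have, not for constructing one.

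The paper's proof avoids all of this by never constructing anything in mixed characteristic. It takes the equal-characteristic example $L_1/K_1$ with residue field $\Kbar$ already furnished by Theorem \ref{converse2}, chooses a characteristic-$0$ complete discrete valuation field $K_2$ with residue field $\Kbar$ whose absolute ramification index $\e(K_2)$ exceeds \emph{all} the non-log upper ramification breaks of $L_1/K_1$, and then applies Lemma \ref{category} (Hattori's equivalence of categories $\FE_{K_1}^{<j}\simeq\FE_{K_2}^{<j}$ with $j=\e(K_2)$, from \cite[Corollary 4.18]{hattori}) to transport $L_1/K_1$ to a $G$-extension $L_2/K_2$ with the same Galois group and the same non-log breaks, hence the same non-integral break; the condition $j\le\min_i\e(K_i)$ in that lemma is precisely the quantitative form of the obstruction your Cohen-ring choice runs into. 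Your proposal never invokes this transfer mechanism, which is the one essential new ingredient of the characteristic-$0$ statement, so the plan as written cannot be completed. (Your parenthetical fallback via the contrapositive of Theorem \ref{mainth} also does not close the gap: it only applies when the wild inertia quotient situation makes $L/K$ itself exhibit the break, and says nothing in the split case $G\cong P\rtimes_\psi C_m$ with $\psi$ trivial, where a fresh construction is unavoidable.)
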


In Section \ref{gene1}, we generalize Proposition \ref{HAP} to the case $\kappa(\Kbar)\ge0$.
In Section \ref{gene2}, we prove Theorem \ref{mainth}.
In Section \ref{apply}, we prove Theorem \ref{converse-hasse-imp} and Theorem \ref{converse-hasse-imp-mix}.
In Section \ref{lemma}, we prove lemmas used in Section \ref{apply}.

\vspace{\baselineskip}
\noindent{\it Acknowledgements.}\
The author would like to thank his supervisor Yuri Yatagawa for her supports and advices on his study.

\vspace{\baselineskip}
\noindent{\it Notation.}\
For a discrete valuation field $K$, we denote by $\OO_K$, $\Kbar$, and $v_K$, respectively, the valuation ring, the residue field, and the valuation normalized so that $v_K(K^{\times})=\Z$.
For an extension $L/K$ of discrete valuation fields, we denote the ramification index of $L/K$ by $\e(L/K)$.
If $L/K$ is a finite Galois extension, we denote by $\Gal(L/K)$, $I(L/K)$, and $P(L/K)$, respectively, the Galois group, the inertia group, and the wild inertia group of $L/K$.
For a positive integer $m$, we put $C_m=\Z/m\Z$.

\section{Generalization to the case $\kappa$ is equal to zero} \label{gene1}
For a field $k$ of characteristic $p>0$, we put $\kappa(k)=\dim_{\F_p}k/\wp(k)$, where $\wp(X)=X^p-X$.
The following proposition is proved in \cite[Remark 1]{feshas}.

\begin{prop}[{\cite[Remark 1]{feshas}}]\label{HAP2}
Let $K$ be a complete discrete valuation field with the perfect residue field $\Kbar$ of residue characteristic $p>0$.
We assume that $\kappa(\Kbar)\neq0$.
Let $L/K$ be a finite Galois extension, and let $M/K$ be the maximal tamely ramified subextension of $L/K$.
We assume that the Galois group $\Gal(L/K)$ of $L/K$ is equal to the inertia group $I(L/K)$ of $L/K$.
If $L/M$ is abelian and the upper ramification breaks of $L/K$ are all integers, then $L/K$ is abelian.
\end{prop}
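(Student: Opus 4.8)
The plan is to exhibit $\Gal(L/K)$ as a semidirect product $P\rtimes C_m$ and to prove that the tame quotient acts trivially on the wild inertia, so that the whole group becomes a direct product.

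Since $\Kbar$ is perfect and $\Gal(L/K)=I(L/K)$, the extension $L/K$ is totally ramified; write $P=P(L/K)=\Gal(L/M)$ for the wild inertia, a normal Sylow $p$-subgroup, and $\Gal(M/K)\cong C_m$ for the tame quotient, where $m=\e(M/K)$ is prime to $p$. As $M/K$ is Galois, tamely and totally ramified, $K$ contains a primitive $m$-th root of unity, whose image $\zeta\in\Kbar^{\times}$ is again a primitive $m$-th root of unity. By Schur--Zassenhaus the sequence $1\to P\to\Gal(L/K)\to C_m\to1$ splits, so $\Gal(L/K)=P\rtimes C_m$. Because $P$ is abelian by hypothesis and $C_m$ is cyclic, it suffices to show that a generator $\tau$ of $C_m$ acts trivially on $P$ by conjugation; then $\Gal(L/K)=P\times C_m$ is abelian.

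First I would translate the integrality hypothesis into a divisibility statement on the lower breaks of $P$. Since $\Gal(L/K)_i=\Gal(L/M)_i$ for all $i\ge1$, the lower-numbering filtration of $P$ is the same whether computed in $L/K$ or in $L/M$. The Herbrand functions satisfy $\phi_{L/K}=\phi_{M/K}\circ\phi_{L/M}$ with $\phi_{M/K}(x)=x/m$, so an upper break $s$ of $L/M$ corresponds to the upper break $r=s/m$ of $L/K$, and integrality of $r$ forces $m\mid s$. The breaks $s$ of $L/M$ are integers by Hasse--Arf, as $L/M$ is abelian and $\Kbar$ is perfect. Running the Herbrand recursion between consecutive breaks, where the slopes of $\phi_{L/M}$ have denominators that are powers of $p$ and hence prime to $m$, I would then deduce by induction, starting from the first break where $\phi_{L/M}$ has slope $1$, that every lower break $i$ of $P$ is divisible by $m$.

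The crux is a character computation for the conjugation action on the graded pieces. For a uniformizer $\pi_L$ of $L$, the break-$i$ graded piece injects into $\Kbar$ through the map $\theta_i\colon \Gal(L/K)_i/\Gal(L/K)_{i+1}\to\Kbar$, $\sigma\mapsto(\sigma(\pi_L)-\pi_L)/\pi_L^{i+1}\bmod\mathfrak m_L$. Choosing a lift $\tilde\tau\in\Gal(L/K)$ of $\tau$ and using that $\tilde\tau$ fixes the residue field $\Kbar$ pointwise (total ramification) while $\tilde\tau(\pi_L)/\pi_L$ has residue $\zeta$, a direct computation, carefully tracking the dependence of $\theta_i$ on the uniformizer, gives $\theta_i(\tilde\tau\sigma\tilde\tau^{-1})=\zeta^{\,i}\,\theta_i(\sigma)$. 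Thus $\tau$ acts on the break-$i$ graded piece by multiplication by $\zeta^{\,i}$, which is trivial exactly when $m\mid i$. By the previous paragraph this holds at every break, so $C_m$ acts trivially on each successive quotient of the filtration $P=\Gal(L/K)_1\supseteq\Gal(L/K)_2\supseteq\cdots\supseteq1$.

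Finally, trivial action on every graded piece means that $\tau-1$ is nilpotent as an endomorphism of the abelian $p$-group $P$, so $\tau$ is a unipotent automorphism and therefore of $p$-power order; since $\tau$ has order dividing $m$, which is prime to $p$, we get $\tau=\id$, i.e. $C_m$ acts trivially on $P$. Hence $\Gal(L/K)=P\times C_m$ is abelian. I expect the main obstacle to be the equivariance computation of the third paragraph --- in particular pinning down the exponent of $\zeta$ and the uniformizer-independence bookkeeping --- together with the passage from integrality of the upper breaks of $L/K$ to divisibility of the lower breaks of $P$ by $m$.
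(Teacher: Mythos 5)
Your argument is correct, and it is genuinely different from how the paper handles this statement. The paper gives no internal proof of Proposition \ref{HAP2} at all: it quotes Fesenko \cite[Remark 1]{feshas}, whose argument rests on class-field-theoretic machinery for complete fields with perfect residue field, and that is precisely where the hypothesis $\kappa(\Kbar)\neq0$ enters (it guarantees the supply of cyclic $p$-extensions of $\Kbar$ on which that machinery is built); the paper's own contribution in Section \ref{gene1} is then to \emph{remove} this hypothesis by a descent argument, cutting $K$ down to a complete subfield whose residue field is a smaller perfect field $k$ with $\kappa(k)\neq0$ (Lemmas \ref{alg-ind}, \ref{per-clo}, \ref{finite}, \ref{reduce1}, Corollary \ref{subex}) and invoking Fesenko over that subfield. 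Your route is instead purely local and elementary: Schur--Zassenhaus gives $G=P\rtimes C_m$; the Herbrand bookkeeping is right (the first wild upper break is $i_1/m$, and $u_{j+1}-u_j=(i_{j+1}-i_j)/(mp^{a_j})$, so integrality of all the $u_j$ forces $m\mid i_j$ by induction --- note your appeal to Hasse--Arf is superfluous here, since $s/m\in\Z$ already gives $s\in m\Z$); the equivariance $\theta_i(\tau\sigma\tau^{-1})=\theta_0(\tau)^i\theta_i(\sigma)$ is exactly \cite[IV, \S 2, Proposition 9]{ser}, valid in your setting because total ramification makes the residue action trivial, and injectivity of $\theta_i$ \cite[IV, \S 2, Proposition 7]{ser} then puts $[\tilde\tau,\sigma]$ in $G_{i+1}$ at every break; finally, a coprime-order automorphism stabilizing the chain $P=G_{i_1}\supsetneq G_{i_2}\supsetneq\cdots\supsetneq 1$ with trivial graded action is unipotent of $p$-power order, hence trivial, so $G=P\times C_m$ is abelian. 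What your approach buys is considerable: besides avoiding class field theory, it never uses $\kappa(\Kbar)\neq0$, so taken at face value it proves the stronger Proposition \ref{kappa} (the case $\kappa(\Kbar)\ge0$) in one stroke and makes the paper's reduction machinery unnecessary for that purpose; what the paper's route buys is that it stays within quoted literature, and its reduction lemmas (notably Lemma \ref{reduce1} and Corollary \ref{subex}) are reused elsewhere in the article. Given that your proof bypasses a hypothesis the literature treats as essential, the two places to double-check before fully trusting it are the ones you yourself flagged: the uniformizer-independence and residue-equivariance in Serre's Proposition 9 (both fine for totally ramified extensions), and the identification $\Gal(L/K)_i=\Gal(L/M)_i$ for $i\ge1$ used to transport the breaks (fine, since $\Gal(L/K)_i\subseteq P$ for $i\ge1$).
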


In \cite{feshas}, it is stated that Proposition \ref{HAP2} in the case $\kappa(\Kbar)=0$ may be treated without any detailed proof.
In this section, we give a detailed proof for Proposition \ref{HAP2} in the case $\kappa(\Kbar)\ge0$. 

\begin{lemma} \label{alg-ind}
Let $k$ be a field of characteristic $p>0$ such that $\kappa(k)\neq0$, and let $\kbar$ be an algebraic closure of $k$. Let a subset $S$ of $\kbar$ be algebraically independent over $k$. Then the field $k(S)$ generated by $S$ over $k$ satisfies $\kappa(k(S))\neq0$.
\end{lemma}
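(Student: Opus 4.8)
The plan is to exhibit an explicit element of $k(S)$ that is not an Artin--Schreier value. Since $\kappa(k)=\dim_{\F_p}k/\wp(k)\neq0$, there is some $a\in k$ with $a\notin\wp(k)$; I claim the same $a$ witnesses $\kappa(k(S))\neq0$, i.e. that $a\notin\wp(k(S))$. Equivalently, I want the Artin--Schreier polynomial $X^p-X-a$, which is irreducible over $k$, to remain irreducible over $k(S)$ (here I read $S$ as a set of elements algebraically independent over $k$, so that $k(S)/k$ is purely transcendental).

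First I would reduce to a single transcendental. Suppose toward a contradiction that $a=\wp(f)=f^p-f$ for some $f\in k(S)$. Since $f$ is a rational expression in finitely many elements of $S$, we may replace $S$ by a finite subset $\{t_1,\dots,t_n\}$ and work in $k(t_1,\dots,t_n)$ with the $t_i$ algebraically independent over $k$. Writing $k(t_1,\dots,t_n)=k(t_1,\dots,t_{n-1})(t_n)$, an induction on $n$ reduces the whole problem to the one-variable statement: if $t$ is transcendental over a field $F$ of characteristic $p$ and $f\in F(t)$ satisfies $f^p-f\in F$, then $f\in F$. Indeed, applying this with $F=k(t_1,\dots,t_{n-1})$ forces $f\in k(t_1,\dots,t_{n-1})$, and then the inductive hypothesis yields $f\in k$.

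The one-variable statement I would prove by a pole/degree count on $\mathbb{P}^1$. At a finite place attached to a monic irreducible $\pi\in F[t]$, if $f$ had a pole then $v_\pi(f^p)=p\,v_\pi(f)<v_\pi(f)<0$, so $v_\pi(f^p-f)=p\,v_\pi(f)<0$, contradicting $f^p-f\in F$; hence $f\in F[t]$. At the place at infinity, if $\deg f\ge1$ then $\deg(f^p-f)=p\deg f\ge1$, again contradicting $f^p-f\in F$; hence $\deg f\le0$ and $f\in F$. Unwinding the induction gives $f\in k$, so $a=\wp(f)\in\wp(k)$, the desired contradiction, and therefore $\kappa(k(S))\neq0$.

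Alternatively, and more conceptually, one can argue by linear disjointness: adjoining a root $\alpha$ of $X^p-X-a$ gives a degree-$p$ Galois extension $k(\alpha)/k$, and since a purely transcendental extension $k(S)/k$ is regular it is linearly disjoint from the algebraic extension $k(\alpha)/k$, whence $[k(\alpha)(S):k(S)]=[k(\alpha):k]=p$ and $\alpha\notin k(S)$; as Artin--Schreier polynomials either split or are irreducible, this again gives $a\notin\wp(k(S))$. The only real subtlety in either route is the bookkeeping for a possibly infinite $S$, which the reduction to finitely many variables dispatches cleanly; the heart of the matter is the elementary one-variable pole estimate, so I expect no serious obstacle.
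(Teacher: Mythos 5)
Your proof is correct, and while it targets the same witness as the paper (an element $a\in k\setminus\wp(k)$ shown to remain outside $\wp(k(S))$), the route is genuinely different. The paper works in all variables at once: it writes a hypothetical root $\alpha$ of $X^p-X-a$ as $f(s_1,\dots,s_n)/g(s_1,\dots,s_n)$, uses algebraic independence to promote $\alpha^p-\alpha-a=0$ to the polynomial identity $f^p-fg^{p-1}-ag^p=0$ in $k[X_1,\dots,X_n]$, and then splits into cases on whether $g$ is a unit, deriving in the non-unit case a contradiction from a prime factor $g_1$ of $g$ via $f^p=g^{p-1}(f+ag)\in(g_1)$ while $f\notin(g_1)$. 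That prime-factor step is exactly your pole estimate $v_\pi(f^p-f)=p\,v_\pi(f)<0$ in multivariable disguise, with $v_{g_1}$ playing the role of $v_\pi$; but your decomposition --- restrict to finitely many variables, then induct one transcendental at a time on the classical fact $\wp(F(t))\cap F=\wp(F)$, proved by the pole count at finite places plus the degree count at infinity --- is cleaner bookkeeping, since it avoids the paper's tacit normalization ``we can assume $f\notin(g_1)$'' (which requires first cancelling common factors of $f$ and $g$) and its separate unit case. Your alternative argument is more conceptual and strictly more general: since $k(S)/k$ is purely transcendental it is regular, hence linearly disjoint from $k(\alpha)/k$, giving $[k(S)(\alpha):k(S)]=p$ at once; this shows $\kappa$ cannot vanish after \emph{any} regular extension, not just a purely transcendental one. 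Finally, you were right to flag the reading of $S$: as literally stated the lemma is vacuous, because a subset of the algebraic closure $\kbar$ that is algebraically independent over $k$ must be empty; your reinterpretation of $S$ as an algebraically independent family in some extension of $k$ is the intended one and matches the actual use in Proposition \ref{kappa}, where $S$ is a transcendence basis of the residue field over $\F_p$.
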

\begin{proof}
Let $a\in k\setminus\wp(k)$, and let $\alpha\in \kbar$ be a root of $X^p-X-a$. 
We assume that $\alpha\in k(S)$. 
Then $\alpha$ can be written as\[\alpha=\frac{f(s_1, s_2, \dots, s_n)}{g(s_1, s_2, \dots, s_n)}\] for $s_1, s_2, \dots, s_n\in S$ and $f,g\in k[X_1, X_2, \dots, X_n]\setminus\{0\}$. 
It follows from $\alpha^p-\alpha-a=0$ that \[f(s_1, s_2, \dots, s_n)^p-f(s_1, s_2, \dots, s_n)g(s_1, s_2, \dots, s_n)^{p-1}-ag(s_1, s_2, \dots, s_n)^p=0.\] 
Since $S$ is algebraically independent over $k$, we get $f^p-fg^{p-1}-ag^p=0$.
If $g\in k^{\times}$, then we have $f\in k^{\times}$ and $\alpha\in k^{\times}$, since $g^p=a^{-1}f(f^{p-1}-g^{p-1})$. 
This contradicts the assumption that $a\not\in \wp(k)$.
If $g\not\in k^{\times}$, then $g$ can be written as a finite product of prime elements $g_1, g_2, \dots, g_m\in k[X_1, X_2, \dots, X_n]$, since $k[X_1, X_2, \dots, X_n]$ is a unique factorization domain. 
We can assume that $f\not\in (g_1)$, and then $f^p\not\in (g_1)$.
However, we have $f^p=g^{p-1}(f-ag)\in(g_1)$.
This is a contradiction.
Therefore, we have $\alpha\not\in k(S)$ and $a\in k(S)\setminus\wp(k(S))$.
\end{proof}

\begin{lemma} \label{per-clo}
Let $k$ be a field of characteristic $p>0$ such that $\kappa(k)\neq0$. Then the perfect closure $k'=\bigcup_{n\ge0} k^{p^{-n}}$ of $k$ satisfies $\kappa(k')\neq0$.
\end{lemma}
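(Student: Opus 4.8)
The plan is to exhibit a single element of $k$ that survives outside $\wp(k')$. Since $\kappa(k)\neq0$, I would choose $a\in k\setminus\wp(k)$ and let $\alpha\in\kbar$ be a root of the Artin--Schreier polynomial $X^p-X-a$. Because $a\notin\wp(k)$, this polynomial has no root in $k$, so $\alpha\notin k$; the goal is then the sharper statement $\alpha\notin k'$, which is exactly equivalent to $a\notin\wp(k')$. Indeed, any root of $X^p-X-a$ has the form $\alpha+i$ with $i\in\F_p\subseteq k'$, so $X^p-X-a$ has a root in $k'$ if and only if $\alpha\in k'$, if and only if $a\in\wp(k')$. Conceptually the claim holds because the Artin--Schreier extension $k(\alpha)/k$ is separable while the perfect closure $k'/k$ is purely inseparable, so $\alpha$ cannot be absorbed into $k'$; I would make this concrete by a direct computation rather than invoking linear disjointness.

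The computational core is to track the $p$-power iterates of $\alpha$. From $\alpha^p=\alpha+a$ together with the Frobenius identity in characteristic $p$, a short induction on $n$ yields
\[
\alpha^{p^n}=\alpha+\sum_{i=0}^{n-1}a^{p^i}.
\]
The point of this formula is that the correction term $\sum_{i=0}^{n-1}a^{p^i}$ lies in $k$ for every $n$, since each $a^{p^i}\in k$.

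With this in hand the conclusion is immediate: if $\alpha\in k'$, then by definition of the perfect closure there is some $n$ with $\alpha\in k^{p^{-n}}$, i.e.\ $\alpha^{p^n}\in k$, and the displayed identity forces $\alpha=\alpha^{p^n}-\sum_{i=0}^{n-1}a^{p^i}\in k$, contradicting $\alpha\notin k$. Hence no root of $X^p-X-a$ lies in $k'$, so $a\in k'\setminus\wp(k')$ and $\kappa(k')\neq0$. The argument is entirely elementary: the only steps requiring care are the observation that $a\notin\wp(k)$ forces $\alpha\notin k$ (immediate from Artin--Schreier theory) and the inductive Frobenius computation, while the sole conceptual input is the separable-versus-purely-inseparable dichotomy that makes the contradiction inevitable. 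I do not expect a genuine obstacle here.
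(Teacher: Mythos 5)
Your proof is correct and follows essentially the same route as the paper: pick $a\in k\setminus\wp(k)$ and show the Artin--Schreier root $\alpha$ of $X^p-X-a$ cannot lie in $k'$. The only difference is that the paper disposes of the key step in one line by noting that $\alpha$ is separable over $k$ while $k'/k$ is purely inseparable, whereas you verify the same fact by the explicit Frobenius identity $\alpha^{p^n}=\alpha+\sum_{i=0}^{n-1}a^{p^i}$, which is a perfectly valid, self-contained substitute for that citation.
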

\begin{proof}
Let $\kbar$ be an algebraic closure of $k$.
Let $a\in k\setminus\wp(k)$, and let $\alpha\in \kbar$ be a root of $X^p-X-a$. 
Since $\alpha$ is separable over $k$ and $k'$ is purely inseparable over $k$, we have $\alpha\not\in k'$.
Hence we have $a\in k'\setminus\wp(k')$.
\end{proof}

\begin{lemma} \label{finite}
Let $k$ be a field of characteristic $p>0$ such that $\kappa(k)\neq0$. Let $k'$ be a finite extension of $k$. Then $k'$ satisfies $\kappa(k')\neq0$.
\end{lemma}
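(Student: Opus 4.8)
The plan is to factor the finite extension $k'/k$ through its maximal separable subextension and treat the separable and purely inseparable parts separately. Let $\ell$ be the separable closure of $k$ in $k'$, so that $\ell/k$ is finite separable and $k'/\ell$ is finite purely inseparable. It then suffices to show $\kappa(\ell)\neq0$ and, building on this, $\kappa(k')\neq0$.

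For the purely inseparable part $k'/\ell$, I would reuse the argument of Lemma \ref{per-clo} essentially verbatim: pick $a\in\ell\setminus\wp(\ell)$ and a root $\alpha$ of $X^p-X-a$ in $\kbar$. Since $X^p-X-a$ is separable, $\alpha$ is separable over $\ell$; as $a\notin\wp(\ell)$ we have $\alpha\notin\ell$, and because $k'/\ell$ is purely inseparable, $\alpha\notin k'$. The roots of $X^p-X-a$ are $\alpha+i$ with $i\in\F_p$, so none of them lies in $k'$; hence $a\in k'\setminus\wp(k')$, giving $\kappa(k')\neq0$ as soon as $\kappa(\ell)\neq0$.

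The substantive step is the separable part $\ell/k$, for which I would use the trace map $\Tr=\Tr_{\ell/k}\colon\ell\to k$. The key observation is the identity $\Tr(x^p)=\Tr(x)^p$ for all $x\in\ell$: writing $\Tr(x)=\sum_i\sigma_i(x)$ as a sum over the distinct $k$-embeddings $\sigma_i$ of $\ell$, the $p$-th power commutes with the sum in characteristic $p$ and with each $\sigma_i$ (which fixes $\F_p$). Consequently $\Tr\circ\wp=\wp\circ\Tr$, so $\Tr$ carries $\wp(\ell)$ into $\wp(k)$ and descends to an $\F_p$-linear map $\ell/\wp(\ell)\to k/\wp(k)$. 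Since $\ell/k$ is separable the trace is surjective, and one checks the descended map is surjective as well; therefore $\kappa(\ell)=\dim_{\F_p}\ell/\wp(\ell)\ge\dim_{\F_p}k/\wp(k)=\kappa(k)>0$.

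The main obstacle I anticipate is precisely the separable case when $p$ divides $[\ell:k]$. If $p\nmid[\ell:k]$ one can argue directly, since an Artin--Schreier extension of $k$ cannot collapse inside $\ell$ for degree reasons; but when $p\mid[\ell:k]$ a given witness $a\in k\setminus\wp(k)$ may become a $\wp$-value in $\ell$, so no single Artin--Schreier extension need survive. The trace identity $\Tr(x^p)=\Tr(x)^p$ is exactly what bypasses this difficulty, producing a surjection at the level of the quotients $k/\wp(k)$ rather than tracking individual classes.
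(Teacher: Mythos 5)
Your proof is correct, but it takes a genuinely different route from the paper's. The paper does not factor $k'/k$ at all: it invokes Witt's theorem \cite[Satz 13]{witZ}, which for $\kappa(k)\neq0$ supplies a cyclic extension $k''/k$ of degree $p^m$ for every $m\ge1$; choosing $p^m>[k':k]$ forces the composite $k'k''/k'$ to be a \emph{nontrivial} cyclic extension of $p$-power degree, whose index-$p$ quotient gives a degree-$p$ cyclic extension of $k'$, whence $\kappa(k')\neq0$ by Artin--Schreier theory. Your argument replaces this with the decomposition $k\subseteq\ell\subseteq k'$ into separable and purely inseparable parts: the inseparable step is indeed Lemma \ref{per-clo} verbatim (the roots $\alpha+i$, $i\in\F_p$, are separable over $\ell$ and so cannot lie in the purely inseparable extension $k'$), and the separable step rests on the identity $\Tr_{\ell/k}(x^p)=\Tr_{\ell/k}(x)^p$, which holds because Frobenius commutes with each embedding and with finite sums in characteristic $p$; hence $\Tr\circ\wp=\wp\circ\Tr$ and the trace descends to a map $\ell/\wp(\ell)\to k/\wp(k)$, surjective precisely because separability makes $\Tr_{\ell/k}$ surjective. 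All of this checks out, and your closing remark correctly identifies the real difficulty (a witness $a\in k\setminus\wp(k)$ can die in $\ell$ when $p\mid[\ell:k]$, so one cannot track a single Artin--Schreier extension). Comparing the two: your route is more elementary and self-contained, avoiding Witt's existence theorem for $C_{p^m}$-extensions entirely, and it yields the stronger quantitative statement $\kappa(\ell)\ge\kappa(k)$ for finite separable extensions; the paper's route is shorter given the citation, requires no separable/inseparable case split since composites and group quotients are insensitive to inseparability of $k'/k$, and fits the paper's broader reliance on Witt's results.
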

\begin{proof}
Let $m\ge1$ be an integer such that $p^m>[k':k]$.
Since $\kappa(k)\neq0$, there exists a cyclic extension $k''/k$ of degree $p^m$ by \cite[Satz 13]{witZ}.
Then the composite field $k'k''$ is a non-trivial cyclic extension over $k'$ of degree a power of $p$, and Galois group $\Gal(k'k''/k')$ has a normal subgroup of index $p$.
Therefore, there exists a cyclic extension over $k'$ of degree $p$.
\end{proof}

Let $K$ and $L$ be complete discrete valuation fields.
If $L$ is an extension of $K$ and $v_L(x)=\e(L/K)v_K(x)$ for any $x\in K^{\times}$, we say that $L/K$ is an {\it extension of complete discrete valuation fields}.

\begin{lemma} \label{reduce1}
Let $K'$ be a complete discrete valuation field of residue characteristic $p>0$, and let $K/K'$ be a extension of complete discrete valuation fields such that $\e(K/K')=1$.
Let $L'/K'$ be a finite totally ramified extension, and put $L=KL'$.
\begin{enumerate}[$(1)$]
\item $L/K$ is a totally ramified extension such that $[L:K]=[L':K']$ and $\e(L/L')=1$.
\item We assume that $L'/K'$ is a Galois extension.
Then $L/K$ is a Galois extension whose Galois group $\Gal(L/K)$ is isomorphic to the Galois group $\Gal(L'/K')$ of $L'/K'$.
Moreover, $u\ge 0$ is an upper ramification break of $L'/K'$ if and only if $u$ is an upper ramification break of $L/K$.
\end{enumerate}
\end{lemma}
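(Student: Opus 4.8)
The plan is to realize $L'/K'$ through an Eisenstein uniformizer and to check that the defining polynomial stays Eisenstein after base change to $K$. Since $L'/K'$ is finite and totally ramified over the complete discrete valuation field $K'$, it is monogenic: there is a uniformizer $\pi_{L'}$ of $L'$ with $\OO_{L'}=\OO_{K'}[\pi_{L'}]$ whose minimal polynomial $f\in\OO_{K'}[X]$ over $K'$ is Eisenstein of degree $n=[L':K']$. I would then exploit the hypothesis $\e(K/K')=1$: a uniformizer $\pi_{K'}$ of $K'$ is also a uniformizer of $K$, and $v_K$ agrees with $v_{K'}$ on $K'^{\times}$, so the valuations of the coefficients of $f$ are unchanged and $f$ remains Eisenstein, hence irreducible, over $K$. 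Consequently $L=KL'=K(\pi_{L'})=K[X]/(f)$ is a field with $[L:K]=n=[L':K']$, the extension $L/K$ is totally ramified, and $\pi_{L'}$ is a uniformizer of $L$. Since $\pi_{L'}$ is then a common uniformizer of $L$ and $L'$, we get $v_L=v_{L'}$ on $L'^{\times}$, i.e. $\e(L/L')=1$; this proves $(1)$.

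For $(2)$, assume $L'/K'$ is Galois. Then $f$ is separable and splits completely in $L'$, so all of its roots already lie in $L'\subseteq L$. Hence $L=K(\pi_{L'})$ is the splitting field of the separable polynomial $f$ over $K$, so $L/K$ is Galois of degree $n$ (its residue extension is trivial, being totally ramified, so the upper ramification groups are defined in the classical sense). Restriction gives a homomorphism $\Gal(L/K)\to\Gal(L'/K')$: each $\sigma$ fixes $K'\subseteq K$ and permutes the roots of $f$, which generate $L'$ over $K'$, whence $\sigma(L')=L'$. This map is injective because $\sigma$ is determined by $\sigma(\pi_{L'})$ with $\pi_{L'}\in L'$, and since both groups have order $n$ it is an isomorphism.

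Finally, for the ramification breaks I would compare the two filtrations in the lower numbering and then pass to the upper numbering through the Herbrand function. Using the common uniformizer $\pi_{L'}$, which generates both $\OO_L$ over $\OO_K$ and $\OO_{L'}$ over $\OO_{K'}$, and writing $\sigma'=\sigma|_{L'}$, one has $\sigma(\pi_{L'})=\sigma'(\pi_{L'})$ together with $v_L=v_{L'}$ on $L'$, so that $i_{L/K}(\sigma)=v_L(\sigma(\pi_{L'})-\pi_{L'})=v_{L'}(\sigma'(\pi_{L'})-\pi_{L'})=i_{L'/K'}(\sigma')$ for every $\sigma\neq\id$. Thus the isomorphism of $(2)$ identifies the lower-numbering ramification groups $\Gal(L/K)_r\cong\Gal(L'/K')_r$ for all $r\ge-1$; in particular the orders of the successive subquotients coincide, so the Herbrand functions $\varphi_{L/K}=\varphi_{L'/K'}$ agree, and therefore the upper-numbering groups and their breaks correspond under the same isomorphism, which gives the last assertion. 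The step to get right is the first one: that $\e(K/K')=1$ forces the Eisenstein polynomial to remain Eisenstein, hence irreducible, over $K$, so that the degree and ramification index are preserved by the base change; once a common uniformizer is in hand, both the Galois descent and the invariance of the ramification breaks follow formally.
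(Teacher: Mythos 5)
Your proposal is correct and follows essentially the same route as the paper: you show the Eisenstein minimal polynomial of a uniformizer $\pi_{L'}$ of $L'$ remains Eisenstein over $K$ because $\e(K/K')=1$ (giving $(1)$), and you use the common uniformizer generating $\OO_L$ over $\OO_K$ and $\OO_{L'}$ over $\OO_{K'}$, together with $v_L|_{L'}=v_{L'}$, to identify the lower ramification filtrations and hence, via Herbrand, the upper breaks, exactly as the paper does. The only cosmetic difference is in $(2)$: you obtain the isomorphism $\Gal(L/K)\cong\Gal(L'/K')$ by a splitting-field plus degree-count argument, where the paper instead notes $L'\cap K=K'$ and invokes the standard compositum fact; these are interchangeable.
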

\begin{proof}
$(1)$ Since $L'/K'$ is totally ramified, $L'$ is generated by a uniformizer $\pi_{L'}$ for $L'$ over $K'$ by \cite[$\mathrm{I}$ Proposition 18]{ser}. 
Then $L$ is generated by $\pi_{L'}$ over $K$.
Moreover, the minimal polynomial $f$ of $\pi_{L'}$ over $K'$ is an Eisenstein polynomial.
Since $\e(K/K')=1$, $f$ is Eisenstein over $K$ and hence irreducible over $K$.
Therefore, $L/K$ is totally ramified and $[L:K]=[L':K']$ by \cite[$\mathrm{I}$ Proposition 17]{ser}.
Since $L/K$ and $L'/K'$ are totally ramified and $\e(K/K')=1$, we have 
\begin{align*}
\e(L/L')[L':K']
&=\e(L/K')=\e(K/K')[L:K]\\
&=[L:K]=[L':K']
\end{align*}
and hence $\e(L/L')=1$.

$(2)$
Since $L'/K'$ is totally ramified and $\e(K/K')=1$, we have $L'\cap K=K'$.
Hence $L/K$ is a Galois extension such that $\Gal(L/K)\cong \Gal(L'/K')$.
Let $\pi_{L'}$ be a uniformizer for $L'$.
Since $\OO_L=\OO_K[\pi_{L'}]$ and $\OO_{L'}=\OO_{K'}[\pi_{L'}]$ with $e(L/L')=1$, we have
\begin{align*}
\Gal(L/K)_a
&=\{\sigma\in \Gal(L/K):v_L(\sigma(\pi_{L'})-\pi_{L'})\ge a+1\}\\
&\cong\{\sigma\in \Gal(L'/K'):v_{L'}(\sigma(\pi_{L'})-\pi_{L'})\ge a+1\}\\
&=\Gal(L'/K')_a
\end{align*}
for any $a\ge-1$.
Therefore, the assertion follows.
\end{proof}

\begin{cor} \label{subex}
In Lemma \ref{reduce1}, let $M'$ be a subextension of $L'/K'$ and put $M=KM'$.
\begin{enumerate}[$(1)$]
\item If $M'/K'$ is the maximal tamely ramified subextension of $L'/K'$, then $M/K$ is the maximal tamely ramified subextension of $L/K$.
\item We assume that $L'/M'$ is a Galois extension. 
Then $L/M$ is a Galois extension whose Galois group $\Gal(L/M)$ is isomorphic to the Galois group $\Gal(L'/M')$ of $L'/M'$.
Moreover, $u\ge 0$ is an upper ramification break of $L'/M'$ if and only if $u$ is an upper ramification break of $L/M$.
\end{enumerate}
\end{cor}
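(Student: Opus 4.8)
The plan is to deduce both assertions by re-applying Lemma~\ref{reduce1}, first to the extension $M'/K'$ and then to the extension $L'/M'$. The point to keep in mind throughout is that every intermediate field of the totally ramified extension $L'/K'$ is again totally ramified over $K'$, and likewise $L'/M'$ is totally ramified; in particular $M'$ is a complete discrete valuation field of residue characteristic $p$, so the hypotheses of Lemma~\ref{reduce1} make sense with $M'$ in place of $K'$.

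For $(1)$, I would first apply Lemma~\ref{reduce1}$(1)$ with $M'$ in place of $L'$, so that $M=KM'$ plays the role of $L$. This yields that $M/K$ is totally ramified with $[M:K]=[M':K']$ and $\e(M/M')=1$. Since $M'/K'$ is the maximal tamely ramified subextension of $L'/K'$, the degree $[M':K']=\e(M'/K')$ is prime to $p$, hence $\e(M/K)=[M:K]=[M':K']$ is prime to $p$ and $M/K$ is tamely ramified. On the other hand, combining $[L:K]=[L':K']$ from Lemma~\ref{reduce1}$(1)$ with $[M:K]=[M':K']$ gives $[L:M]=[L':M']$, which is a power of $p$ because $M'$ is maximal tame in $L'/K'$. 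A tamely ramified subextension $M/K$ for which $L/M$ has $p$-power degree must be the maximal tamely ramified subextension of $L/K$, which proves $(1)$.

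For $(2)$, I would apply Lemma~\ref{reduce1} once more, now taking $M'$ as the base field and $M/M'$ as the extension with $\e(M/M')=1$. The required hypotheses are all furnished by $(1)$: $M'$ is a complete discrete valuation field of residue characteristic $p$, the extension $M/M'$ satisfies $\e(M/M')=1$, the extension $L'/M'$ is finite totally ramified, and $L=ML'$ because $M'\subseteq L'$ forces $ML'=KM'L'=KL'=L$. Lemma~\ref{reduce1}$(2)$ then gives that $L/M$ is Galois with $\Gal(L/M)\cong\Gal(L'/M')$ and that $u\ge 0$ is an upper ramification break of $L'/M'$ if and only if it is one of $L/M$, which is $(2)$.

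The routine but essential point, and the only place where care is required, is the verification that Lemma~\ref{reduce1} may legitimately be re-based at $M'$: namely that $M$ is again a complete discrete valuation field (being a finite extension of $K$ of degree $[M':K']$), that $v_M$ restricts on $M'$ to the normalized valuation $v_{M'}$ so that $M/M'$ is genuinely an extension of complete discrete valuation fields, and that $\e(M/M')=1$. All of these are supplied by part $(1)$ together with the uniqueness of the extension of a valuation to a finite extension of a complete field, so no genuine obstacle remains.
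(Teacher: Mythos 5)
Your proposal is correct and follows essentially the same route as the paper: part $(1)$ by applying Lemma~\ref{reduce1}$(1)$ to $M'/K'$ and $K/K'$ and comparing ramification indices (degrees), and part $(2)$ by re-basing Lemma~\ref{reduce1}$(2)$ at $M'$ using $\e(M/M')=1$. Your closing verification that $M/M'$ is a legitimate extension of complete discrete valuation fields, and that this works for an arbitrary subextension $M'$ (not only the maximal tame one), is exactly the implicit content of the paper's two-line proof of $(2)$.
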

\begin{proof}
$(1)$ By Lemma \ref{reduce1} $(1)$, $L/K$ is totally ramified, and we get $\e(L/K)=\e(L'/K')$.
If $\e(L'/K')=mp^n$ with $p\nmid m$, then we have $\e(L'/M')=p^n$ and $\e(M'/K')=m$.
Applying Lemma \ref{reduce1} $(1)$ to $M'/K'$ and $K/K'$, we have $\e(M/K)=\e(M'/K')=m$, and hence $\e(L/M)=\e(L'/M')=p^n$.
Therefore, $M/K$ is the maximal tamely ramified subextension.

$(2)$ Applying Lemma \ref{reduce1} $(1)$ to $M'/K'$ and $K/K'$, we have $\e(M/M')=1$.
Thus we can apply Lemma \ref{reduce1} $(2)$ to $L'/M'$ and $M/M'$.
Therefore, the assertion follows.
\end{proof}

\begin{prop}[{cf.\ \cite[Remark 1]{feshas}}]\label{kappa}
Let $K$ be a complete discrete valuation field with the perfect residue field $\Kbar$ of residue characteristic $p>0$.
Let $L/K$ be a finite Galois extension, and let $M/K$ be the maximal tamely ramified subextension of $L/K$.
We assume that the Galois group $\Gal(L/K)$ of $L/K$ is equal to the inertia group $I(L/K)$ of $L/K$.
If $L/M$ is abelian and if the upper ramification breaks of $L/K$ are all integers, then $L/K$ is abelian.
\end{prop}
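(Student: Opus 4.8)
The case $\kappa(\Kbar)\neq0$ is precisely Proposition \ref{HAP2}, so the whole content here is to remove that hypothesis; accordingly, assume $\kappa(\Kbar)=0$. The plan is to enlarge the base field to one whose residue field is \emph{still perfect} but now has nonzero $\kappa$, so that Proposition \ref{HAP2} becomes applicable, and then to transport the conclusion back down using Lemma \ref{reduce1} and Corollary \ref{subex}. Note that since $\Gal(L/K)=I(L/K)$ and $\Kbar$ is perfect, $L/K$ is totally ramified, which is exactly the shape required by Lemma \ref{reduce1}.

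First I would manufacture a residue field with nonzero $\kappa$ out of thin air. Let $t$ be transcendental over $\Kbar$. A direct computation, analogous to the proof of Lemma \ref{alg-ind} but with $t$ itself playing the role of the element $a\in k\setminus\wp(k)$, shows that $t\notin\wp(\Kbar(t))$: if $t=\beta^p-\beta$ with $\beta=f/g\in\Kbar(t)$ in lowest terms, then clearing denominators forces $g$ to be constant and then a degree count rules out $\beta\in\Kbar[t]$. Hence $\kappa(\Kbar(t))\neq0$ even though $\kappa(\Kbar)=0$. Let $\tilde\Kbar$ be the perfect closure of $\Kbar(t)$; by Lemma \ref{per-clo} we still have $\kappa(\tilde\Kbar)\neq0$, while $\tilde\Kbar$ is perfect and contains $\Kbar$.

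Next I would lift this residue extension to the complete discrete valuation level, producing a complete discrete valuation field $\tilde K\supseteq K$ with $\e(\tilde K/K)=1$ and residue field $\tilde\Kbar$. In the equal-characteristic case this is immediate, since $K\cong\Kbar(\!(\pi)\!)$ and one may take $\tilde K=\tilde\Kbar(\!(\pi)\!)$, for which $\pi$ remains a uniformizer. In the mixed-characteristic case it follows from the Cohen structure theory of complete discrete valuation rings, realizing $\tilde\Kbar/\Kbar$ by a flat complete discrete valuation ring extension $\OO_K\to\OO_{\tilde K}$ with the same uniformizer. I expect this lifting step, in particular accommodating the non-algebraic, imperfect-to-perfect residue extension in mixed characteristic, to be the main technical obstacle; the rest is formal given the preparatory lemmas.

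Finally I would run the reduction. Applying Lemma \ref{reduce1} with base field $K$ and extension $\tilde K$, and setting $\tilde L=\tilde K L$, the extension $\tilde L/\tilde K$ is totally ramified and Galois with $\Gal(\tilde L/\tilde K)\cong\Gal(L/K)$, and it has exactly the same upper ramification breaks as $L/K$, hence all integral. By Corollary \ref{subex}, $\tilde M=\tilde K M$ is the maximal tamely ramified subextension of $\tilde L/\tilde K$, and $\tilde L/\tilde M$ is Galois with $\Gal(\tilde L/\tilde M)\cong\Gal(L/M)$, hence abelian. Thus $\tilde L/\tilde K$ meets every hypothesis of Proposition \ref{HAP2} over the field $\tilde K$, whose perfect residue field $\tilde\Kbar$ satisfies $\kappa(\tilde\Kbar)\neq0$ (total ramification with perfect residue field gives $\Gal(\tilde L/\tilde K)=I(\tilde L/\tilde K)$). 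Therefore $\tilde L/\tilde K$ is abelian, and since $\Gal(L/K)\cong\Gal(\tilde L/\tilde K)$, the extension $L/K$ is abelian as well.
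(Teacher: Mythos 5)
Your proof is correct, and it takes a genuinely different route from the paper's: the two arguments both funnel through Proposition \ref{HAP2} by an $\e=1$ change of base field, with Lemma \ref{reduce1} and Corollary \ref{subex} transporting the Galois group, the maximal tame subextension, and the integral upper breaks --- but you \emph{ascend} while the paper \emph{descends}. You base-change to $\tilde K\supseteq K$ whose residue field is the perfect closure of $\Kbar(t)$; the paper instead fixes a transcendence basis $S$ of $\Kbar$ over $\F_p$, takes $k$ to be the perfect closure of $\F_p(S)$ (so $\kappa(k)\neq0$ by Lemmas \ref{alg-ind} and \ref{per-clo}), collects the finitely many coefficients $C\subset\OO_F$ needed to define $L$ and its Galois action, and completes $F''(C)$ to obtain a complete subfield over which $L/K$ descends to $L'/K'$ with $L=KL'$, the residue field being a \emph{finite} extension of $k$ (whence $\kappa\neq0$ by Lemma \ref{finite}). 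Two remarks on your version. First, your ascent needs an input the paper's lemmas do not literally supply: Lemma \ref{alg-ind} assumes $\kappa(k)\neq0$, which is exactly what fails for $\Kbar$; your direct Artin--Schreier computation that $t\notin\wp(\Kbar(t))$ (clearing denominators forces $g$ constant by the UFD argument, and a degree count rules out polynomial solutions of $\beta^p-\beta=t$) is sound, and in fact strengthens Lemma \ref{alg-ind}: adjoining a single transcendental forces $\kappa\neq0$ over \emph{any} base of characteristic $p$. Second, the lifting step you flag as the main technical obstacle is routine in this setting, precisely because both $\Kbar$ and $\tilde\Kbar$ are perfect: in mixed characteristic, functoriality of Witt vectors gives $\W(\Kbar)\hookrightarrow\W(\tilde\Kbar)$ with $\e=1$, $K$ is finite totally ramified over $\Frac(\W(\Kbar))$ by \cite[$\mathrm{II}$ Theorem 4]{ser}, and adjoining a root of the (still Eisenstein) minimal polynomial of $\pi_K$ to $\Frac(\W(\tilde\Kbar))$ yields $\tilde K$ with $\e(\tilde K/K)=1$ --- the same toolkit as case $\mathrm{i})$ of the paper's proof, with no need for general Cohen theory over imperfect residue fields. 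As for what each route buys: yours eliminates the fiddliest portion of the paper's argument --- the descent of $L/K$ to $L'/K'$, with its verifications that $K\cap L'=K'$ and that $L'/K'$ is Galois --- and renders Lemmas \ref{alg-ind} and \ref{finite} dispensable; the paper's descent, in exchange for those verifications, keeps the auxiliary base inside $K$ and only ever enlarges residue fields by finite extensions of a fixed small field.
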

\begin{proof}
If $\kappa(\Kbar)\neq0$, the assertion follows from Proposition \ref{HAP2}.
We assume that $\kappa(\Kbar)=0$.
Let $S$ be a transcendence basis of $\Kbar$ over $\F_p$, and set $k'=\F_p(S)$ and $k=\bigcup_{n\ge0} k'^{p^{-n}}$.
By Lemma \ref{alg-ind} and Lemma \ref{per-clo}, we have $\kappa(k)\neq0$ and $\Kbar/k$ is algebraic.

It suffices to show that there exist extensions $K/F$ and $F/F''$ of complete discrete valuation fields, such that $K/F$ is a finite totally ramified extension, the residue field of $F''$ is $k$, and that $\e(F/F'')=1$.
Actually, we can show that $\Gal(L/K)$ is abelian by using such extensions as follows:

Since $L/K$ and $K/F$ are finite totally ramified extensions, so is $L/F$.
Hence $\OO_L$ is generated by a uniformizer $\pi_L$ over $\OO_F$ and $\OO_K$ is generated by a uniformizer $\pi_K$ over $\OO_F$ by \cite[$\mathrm{I}$ Proposition 18]{ser}. 
Moreover, the minimal polynomial $f$ of $\pi_L$ over $F$ is an Eisenstein polynomial.
Since $L/K$ is a Galois extension, the conjugate elements $\alpha_i$ of $\pi_L$ over $K$ belong to $\OO_L=\OO_F[\pi_L]$, and $\alpha_i$ can be written as $\alpha_i=g_i(\pi_L)$ for $g_i\in \OO_F[X]$.
Let $C\subset\OO_F$ be the finite set of the coefficients of $f$ and $g_i$.
Then $F''(C)\subset F$ is a discrete valuation field with the valuation $v_F|_{F''(C)}$.
Let $F'\subset F$ be the completion of $F''(C)$.
Then the residue extension $\overline{F'}/k$ of $F'/F''$ is finite, since $\overline{F'}=\overline{F''(C)}$ is generated by the images of elements of $C$ in $\overline{F}$ over $k$.
Since $k$ is perfect, so is $\overline{F'}$.
By Lemma \ref{finite}, $\overline{F'}$ satisfies $\kappa(\overline{F'})\neq0$.
Put $L'=F'(\pi_L)$ and $K'=F'(\pi_K)$. 
Then we have $K=FK'$ and $L=FL'=KL'$.
Since $\e(F/F')=1$, $f$ is Eisenstein over $F'$ and hence irreducible over $F'$.
Thus $L'/F'$ is a finite totally ramified extension by \cite[$\mathrm{I}$ Proposition 17]{ser}.
By Lemma \ref{reduce1} $(1)$, we have $e(K/K')=1$.
Hence we get $K\cap L'=K'$, since $L'/K'$ is a finite totally ramified extension.
Since $\alpha_i=g_i(\pi_L)\in F'(\pi_L)=L'$, the coefficients of the minimal polynomial of $\pi_L$ over $K$ belong to $K\cap L'=K'$.
Hence, the conjugate elements of $\pi_L$ over $K'$ belong to $L'$ and $L'/K'$ is a Galois extension.
Let $M'/K'$ be the maximal tamely ramified subextension of $L'/K'$. 
Applying Corollary \ref{subex} to a Galois extension $L'/F'$, a extension $F/F'$ and subextensions $K'/F'$ and $M'/F'$ of $L'/F'$, we get $\Gal(L/K)\cong\Gal(L'/K')$, $M'=KM'$, and $\Gal(L/M)\cong\Gal(L'/M')$, and the upper ramification breaks of $L'/K'$ are all integers. 
By Proposition \ref{HAP2}, the Galois group $\Gal(L/K)\cong\Gal(L'/K')$ is abelian, which is what we desired.

We construct the desired fields $F$ and $F''$.

$\mathrm{i})$ Consider the case $\ch(K)=0$.
By \cite[$\mathrm{II}$ Theorem 3]{ser}, there exists a complete discrete valuation ring $W(\Kbar)$ (resp. $W(k)$) of characteristic $0$ with the residue field $\Kbar$ (resp. $k$) which is absolutely unramified.
Put $F=\Frac(\W(\Kbar))$ and $F''=\Frac(\W(k))$.
Then there exists an injection $F\to K$ such that $K/F$ is a finite totally ramified extension by \cite[$\mathrm{II}$ Theorem 4]{ser}.
There exists an injection $F''\to F$ such that $\e(F/F'')=1$ by \cite[$\mathrm{II}$ Proposition 10]{ser}.

$\mathrm{ii})$ Consider the case $\ch(K)=p$.
Put $F=K$.
By \cite[$\mathrm{II}$ Theorem 2]{ser}, $K$ is isomorphic to $\Kbar((T))$. 
Let $F''\subset K$ be the subfield  corresponding to  $k((T))\subset \Kbar((T))$.
Then $F''$ is a complete discrete valuation field with the residue field $k$ and $\e(K/F'')=1$.
\end{proof}

\section{Generalization to the case the residue field is imperfect} \label{gene2}
Let $K$ be a complete discrete valuation field with the residue field $\Kbar$, which is not necessarily perfect, of residue characteristic $p>0$ , and let $L/K$ be a finite Galois extension.
The upper numbering ramification groups $\Gal(L/K)^r\subset\Gal(L/K)$ is defined in \cite[Remark 3.5]{abbes-saito}.
In order to avoid confusion, we call them the {\it non-log upper ramification groups} and denote them by $\Gal(L/K)_{\text{n-log}}^r$.
We say that $u\ge0$ is a {\it non-log upper ramification break} of $L/K$ if $\Gal(L/K)_{\text{n-log}}^{u+\epsilon}\subsetneq\Gal(L/K)_{\text{n-log}}^{u}$ for any $\epsilon>0$.
If a subextension $M/K$ of $L/K$ is a Galois extension, it follows that $\Gal(M/K)_{\text{n-log}}^r=\Gal(L/K)_{\text{n-log}}^r\Gal(L/M)/\Gal(L/M)$.
In particular, if $u\ge0$ is a non-log upper ramification break of $M/K$, then $u$ is a non-log upper ramification break of $L/K$.
By \cite[Proposition 3.7]{abbes-saito}, the inertia group $I(L/K)$ of $L/K$ is equal to $\Gal(L/K)_{\text{n-log}}^1$, and the wild inertia group $P(L/K)$ of $L/K$ is equal to $\bigcup_{r>1}\Gal(L/K)_{\text{n-log}}^r$.
In addition, if the residue extension of $L/K$ is separable, $\Gal(L/K)_{\text{n-log}}^r$ is isomorphic to the classical upper ramification group $\Gal(L/K)^{r-1}$ for any $r\ge 0$.
In this section, we generalize Proposition \ref{kappa} to the case the residue field is not necessarily perfect.

\vspace{\baselineskip}
Let $K$ be a discrete valuation field.
If $\Kbar^{alg}$ is an algebraic closure of $\Kbar$, the cotangent complex $L_{\Kbar^{alg}/\OO_K}$ is defined in \cite[$\mathrm{II}$, 1.2.3]{illusie}.
The tangentially dominant morphism of discrete valuation rings are defined in \cite[Definition 1.1.6]{graded} as follows:

\begin{definition}
Let $K$ and $K'$ be discrete valuation fields with the residue field $\Kbar$ and $\overline{K'}$. 
We say that a morphism $\OO_K\to \OO_{K'}$ is a {\it morphism of discrete valuation rings} if it is faithfully flat, i.e., $\OO_K\to \OO_{K'}$ is flat and $\Spec\,\OO_{K'}\to\Spec\,\OO_K$ is surjective.
Let $\OO_K\to \OO_{K'}$ be a morphism of discrete valuation rings and $\Kbar^{alg}\to \overline{K'}^{alg}$ be a morphism of algebraic closures of residue fields extending $\Kbar\to \overline{K'}$.
We say that a morphism $\OO_K\to \OO_{K'}$ of discrete valuation rings is {\it tangentially dominant} if the the canonical morphism
\[S(H_1(L_{\Kbar^{alg}/\OO_K}))\to S(H_1(L_{\overline{K'}^{alg}/\OO_{K'}}))\]
of symmetric algebras is an injection.
\end{definition}

If $K$ and $K'$ are complete discrete valuation fields and $\OO_K\to \OO_{K'}$ is a morphism of discrete valuation rings, then $\OO_K\to \OO_{K'}$ is an injection and $K'/K$ is a extension of complete discrete valuation fields.

\begin{lemma}[{cf.\ \cite[PROPOSITION 1.1.10]{graded}}]\label{tan-dom}
Let $K$ be a complete discrete valuation field of residue characteristic $p>0$.
Then there exists a complete discrete valuation field $K'$ with the perfect residue field of residue characteristic $p$ and there exists a tangentially dominant morphism $\OO_K\to \OO_{K'}$ of discrete valuation rings. 
\end{lemma}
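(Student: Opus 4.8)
The plan is to treat Lemma \ref{tan-dom} as a mild repackaging of \cite[PROPOSITION 1.1.10]{graded}: I would produce $\OO_{K'}$ as an explicit faithfully flat local extension of $\OO_K$ whose residue field $\overline{K'}$ is the perfect closure $\bigcup_{n\ge0}\Kbar^{p^{-n}}$ of $\Kbar$, and then verify the two conditions in the definition of a tangentially dominant morphism, namely faithful flatness of $\OO_K\to\OO_{K'}$ and injectivity of the induced map $S(H_1(L_{\Kbar^{alg}/\OO_K}))\to S(H_1(L_{\overline{K'}^{alg}/\OO_{K'}}))$ of symmetric algebras. Since $\overline{K'}$ is perfect, the field $K'$ will automatically be of the required form, so everything reduces to constructing the ring and checking the tangential dominance.

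First I would construct $\OO_{K'}$, splitting into the two characteristic cases exactly as in the proof of Proposition \ref{kappa}. When $\ch(K)=p$, Cohen's theorem \cite[$\mathrm{II}$ Theorem 2]{ser} gives $\OO_K\cong\Kbar[[t]]$, and I would set $\OO_{K'}=\overline{K'}[[t]]$ with $t\mapsto t$; this inclusion is a filtered colimit of the (free) extensions $\Kbar^{p^{-n}}[[t]]$, hence flat, and it is local with $\e(K'/K)=1$. When $\ch(K)=0$, I would instead lift $\overline{K'}$ to an absolutely unramified complete discrete valuation ring $\W(\overline{K'})$ via \cite[$\mathrm{II}$ Theorem 3]{ser} and form a compositum of $\OO_K$ with $\W(\overline{K'})$ inside a common complete extension so that $\OO_K\to\OO_{K'}$ is flat with $\e(K'/K)=1$, using \cite[$\mathrm{II}$ Proposition 10]{ser} to control absolute ramification. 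In both cases $\Spec\,\OO_{K'}\to\Spec\,\OO_K$ is surjective because the map is local, so the morphism is faithfully flat; if the naive choice $\OO_{K'}=\overline{K'}[[t]]$ turns out not to be tangentially dominant, I would fall back on the precise extension constructed in \cite[PROPOSITION 1.1.10]{graded}, which is designed for exactly this purpose.

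The hard part will be the injectivity of the symmetric-algebra map, and this is where the cotangent-complex input of \cite{graded} is indispensable. Here I would exploit the transitivity triangles for $\OO_K\to\Kbar\to\Kbar^{alg}$ and for $\OO_{K'}\to\overline{K'}\to\overline{K'}^{alg}$, together with the identification $\overline{K'}^{alg}=\Kbar^{alg}$ (the algebraic closure is unchanged by perfection), to reduce each of $H_1(L_{\Kbar^{alg}/\OO_K})$ and $H_1(L_{\overline{K'}^{alg}/\OO_{K'}})$ to the contribution $(\pi)/(\pi^2)$ of a uniformizer together with the residue-field differentials, in the sense of \cite[$\mathrm{II}$, 1.2.3]{illusie}. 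Comparing these under the functoriality of the cotangent complex for the chosen commutative square, and invoking \cite[PROPOSITION 1.1.10]{graded} for the resulting injection of symmetric algebras, then concludes the argument. I expect the delicate point to be not the flatness bookkeeping but precisely this comparison of the residue-field contributions across the perfection map $\Kbar\to\overline{K'}$, since controlling that comparison is exactly the role played by \cite[PROPOSITION 1.1.10]{graded}.
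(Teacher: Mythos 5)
There is a genuine gap, and it is in your Plan~A. The naive choice $\OO_{K'}=\overline{K'}[[t]]$ with $\overline{K'}=\bigcup_{n\ge0}\Kbar^{p^{-n}}$ is \emph{not} tangentially dominant whenever $\Kbar$ is imperfect, which is exactly the case the lemma is needed for. Indeed, from the transitivity triangle for $\OO_K\to\Kbar\to\Kbar^{alg}$, the module $H_1(L_{\Kbar^{alg}/\OO_K})$ contains, besides the one-dimensional conormal contribution of a uniformizer, classes coming from $H_1(L_{\Kbar^{alg}/\Kbar})$, i.e., from a $p$-basis of $\Kbar$; meanwhile, since $\overline{K'}$ is perfect, $H_1(L_{\overline{K'}^{alg}/\OO_{K'}})$ is one-dimensional, spanned by the class of the uniformizer. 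Under your map $t\mapsto t$, $b\mapsto b$, every $p$-basis element of $\Kbar$ becomes a $p$-th power in $\overline{K'}$, so all the imperfection classes die, and the map $S(H_1(L_{\Kbar^{alg}/\OO_K}))\to S(H_1(L_{\overline{K'}^{alg}/\OO_{K'}}))$ has a large kernel: the morphism fails Definition~1.1.6. Injectivity of a symmetric-algebra map from many variables into one variable is only possible because the target field can be enlarged, with the $p$-basis classes sent to algebraically independent multiples of the uniformizer class; arranging this is precisely the content of the construction in \cite[PROPOSITION 1.1.10]{graded}, which you cannot reproduce by pure perfection of the residue field (the same objection applies to your mixed-characteristic compositum with $\W(\overline{K'})$). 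Your closing step is also circular: \cite[PROPOSITION 1.1.10]{graded} is an existence statement for its own morphism, not a criterion you can invoke to certify injectivity for a morphism you built yourself.

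Your Plan~B (``fall back on the extension of \cite[PROPOSITION 1.1.10]{graded}'') is the paper's actual route, but you then skip the one point that the paper's proof genuinely has to handle: that proposition produces a discrete valuation field $K''$ with perfect residue field which need not be \emph{complete}, whereas the lemma (and its later use via Lemma \ref{reduce2}) requires completeness. The paper completes $K''$ to $K'$, notes $\e(K'/K'')=1$ and $\overline{K'}=\overline{K''}$, cites \cite[PROPOSITION 1.1.8 (b)]{graded} to see that $\OO_{K''}\to\OO_{K'}$ is tangentially dominant, and concludes because a composition of two injections of symmetric algebras is an injection. If you adopt Plan~B, you must add this completion-and-composition argument; with it, the proof is complete and coincides with the paper's.
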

\begin{proof}
By \cite[PROPOSITION 1.1.10]{graded}, there exists a discrete valuation field $K''$ with the perfect residue field of residue characteristic $p$ and there exists a tangentially dominant morphism $\OO_K\to \OO_{K''}$ of discrete valuation rings. 
Let $K'$ be the completion of $K''$.
Then we have $\e(K'/K'')=1$ and $\overline{K'}=\overline{K''}$.
Thus the canonical morphism $\OO_{K''}\to \OO_{K'}$ is tangentially dominant by \cite[PROPOSITION 1.1.8  (b)]{graded}.
Therefore, the composition $\OO_K\to \OO_{K'}$ is tangentially dominant by the injectivity of the composition of two injections of symmetric algebras.
\end{proof}

\begin{lemma}[{cf.\ \cite[COROLLARY 4.2.6]{graded}}]\label{reduce2}
Let $K$ be a complete discrete valuation field of residue characteristic $p>0$.
Let $L/K$ be a finite Galois extension, and let $M/K$ the maximal tamely ramified subextension of $L/K$.
We assume that the Galois group $\Gal(L/K)$ of $L/K$ is equal to the inertia group $I(L/K)$ of $L/K$.
Let $K'$ be a complete discrete valuation field with the perfect residue field of residue characteristic $p>0$ and $\OO_K\to \OO_{K'}$ be a tangentially dominant morphism.
Let $L'=LK'$  and $M'=MK'$ be composite fields.
Then $L'/K'$ is a finite totally ramified Galois extension with the Galois group $\Gal(L'/K')$ isomorphic to $\Gal(L/K)$, and $M'/K'$ is the maximal tamely ramified subextension of $L'/K'$ with $\Gal(L'/M')$ isomorphic to $\Gal(L/M)$.
Moreover, $u\ge 1$ is a non-log upper ramification break of $L/K$ if and only if $u-1$ is an upper ramification break of $L'/K'$.
\end{lemma}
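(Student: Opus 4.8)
The plan is to concentrate all the genuine ramification-theoretic content in the cited \cite[COROLLARY 4.2.6]{graded} and then translate its output into the three assertions of the lemma. First I would invoke that corollary for the tangentially dominant morphism $\OO_K\to\OO_{K'}$: it yields that $L'/K'$ is a finite Galois extension with a canonical isomorphism $\Gal(L'/K')\cong\Gal(L/K)$, and that under this isomorphism the non-log ramification filtrations correspond, i.e.\ $\Gal(L'/K')_{\text{n-log}}^r\cong\Gal(L/K)_{\text{n-log}}^r$ for every $r\ge0$. Everything else is bookkeeping built on this compatibility.

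Next I would deduce total ramification. Since $\Gal(L/K)=I(L/K)=\Gal(L/K)_{\text{n-log}}^1$ by \cite[Proposition 3.7]{abbes-saito}, the filtration compatibility gives $I(L'/K')=\Gal(L'/K')_{\text{n-log}}^1=\Gal(L'/K')$, so the residue extension $\overline{L'}/\overline{K'}$ is purely inseparable. As $\overline{K'}$ is perfect it has no nontrivial purely inseparable extension, whence $\overline{L'}=\overline{K'}$ and $L'/K'$ is totally ramified; in particular its residue extension is separable, so the classical upper ramification groups of $L'/K'$ are defined.

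To identify $M'$, recall that $M=L^{P(L/K)}$ with $P(L/K)=\bigcup_{r>1}\Gal(L/K)_{\text{n-log}}^r$ normal in $\Gal(L/K)$, so $M/K$ is Galois. Applying the compatibility to the Galois subextension $M/K$ (functoriality of the non-log filtration under quotients, as recalled in Section \ref{gene2}) gives $\Gal(L'/M')\cong\Gal(L/M)$ and identifies $P(L'/K')=\bigcup_{r>1}\Gal(L'/K')_{\text{n-log}}^r$ with $P(L/K)$. Since $L'/K'$ is totally ramified with separable residue field, its maximal tamely ramified subextension is precisely the fixed field $(L')^{P(L'/K')}$, which equals $MK'=M'$; this also records $\Gal(L'/M')\cong\Gal(L/M)$.

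Finally, for the numbering of breaks I would chain two equivalences. By the filtration compatibility, $u$ is a non-log upper ramification break of $L/K$ if and only if it is a non-log upper ramification break of $L'/K'$. Since the residue extension of $L'/K'$ is separable, the relation recalled in Section \ref{gene2}, namely $\Gal(L'/K')_{\text{n-log}}^r\cong\Gal(L'/K')^{r-1}$, shows that $u\ge1$ is a non-log break of $L'/K'$ if and only if $u-1$ is a classical upper ramification break of $L'/K'$; combining the two gives the asserted equivalence. The main obstacle is the very first step: the whole argument rests on extracting from \cite[COROLLARY 4.2.6]{graded} both the Galois isomorphism and the \emph{exact} preservation of the non-log filtration indices under a tangentially dominant morphism, which is the technical heart of the cited theory and the only place where the tangential dominance hypothesis is essentially used.
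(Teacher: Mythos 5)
Your overall skeleton---filtration compatibility from \cite[COROLLARY 4.2.6]{graded}, total ramification of $L'/K'$ from $\Gal=I$ over a perfect residue field, identification of $M'$ as the fixed field of the wild inertia, and the shift $r\mapsto r-1$ to classical breaks---matches the paper's proof, and your last two steps are essentially the paper's. But your first step over-extracts from the citation, and this is a genuine gap. The corollary, as invoked in the paper, gives $\Gal(L/K)_{\text{n-log}}^r\cong\Gal(L'/K')_{\text{n-log}}^r$ only for $r>1$, i.e.\ only on the wild part. It does \emph{not} give the isomorphism $\Gal(L'/K')\cong\Gal(L/K)$: since $L'=LK'$, the restriction map $\Gal(L'/K')\to\Gal(L/K)$ is a priori merely injective, and its surjectivity amounts to $[L':K']=[L:K]$, i.e.\ $L\cap K'=K$, which is not formal here. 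In the imperfect-residue setting $L/K$ may contain fierce subextensions (separable over $K$, with $\e=1$ and purely inseparable residue extension), and since $\e(K'/K)=1$ while $\overline{K'}$ is perfect, such a subextension could a priori be absorbed into $K'$ and collapse the degree; nothing in your argument rules this out. Likewise, compatibility at $r=1$ (the inertia level) is not part of the cited statement, and it genuinely fails for general $L/K$ without the hypothesis $\Gal(L/K)=I(L/K)$: for an unramified $L/K$ the wild-part statement is vacuous while $\Gal(L'/K')$ can be anything from trivial to $\Gal(L/K)$.

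The paper closes exactly this hole with a separate tame-part argument that your proposal is missing. By \cite[PROPOSITION 1.1.8 (b)]{graded} one has $\e(K'/K)=1$; since $\Gal(L/K)=I(L/K)$ forces the residue extension of $M/K$ to be simultaneously separable and purely inseparable, $M/K$ is totally (tamely) ramified, so Lemma \ref{reduce1}~(1) gives that $M'/K'$ is totally and tamely ramified with $[M':K']=[M:K]$. Then, writing $M''$ for the maximal tamely ramified subextension of $L'/K'$ (so $M'\subseteq M''$) and using the wild-part isomorphism $[L':M'']=[L:M]$ from the corollary, the degree count
\[
[L:K]\ \ge\ [L':K']\ \ge\ [L':M''][M':K']\ =\ [L:M][M:K]\ =\ [L:K]
\]
forces equality throughout, yielding simultaneously $M'=M''$, the surjectivity of $\Gal(L'/K')\to\Gal(L/K)$, and---only at this point, using $\Gal=I$ on both sides---the compatibility of the filtrations at $r=1$. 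So your bookkeeping is sound, but the step you yourself flagged as "the technical heart" is not contained in \cite[COROLLARY 4.2.6]{graded}; it is precisely the content this lemma must supply, via $\e(K'/K)=1$, Lemma \ref{reduce1}, and the degree count.
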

\begin{proof}
Since $L/K$ is a finite Galois extension, $L'/K'$ is a finite Galois extension.
By \cite[PROPOSITION 1.1.8 (b)]{graded}, we have $\e(K'/K)=1$.
Since $M/K$ is totally ramified and $\e(K'/K)=1$, $M'/K'$ is totally ramified and we have $\e(M'/K')=[M':K']=[M:K]=\e(M/K)$ by Lemma \ref{reduce1} (1).
Since $\e(M'/K')=\e(M/K)$ is indivisible by $p$, $M'/K'$ is tamely ramified.
Let $M''/K'$ be the maximal tamely ramified subextension of $L'/K'$.
By \cite[COROLLARY 4.2.6]{graded}, for any $r>1$, $\Gal(L/K)_{\text{n-log}}^r$ is isomorphic to $\Gal(L'/K')_{\text{n-log}}^r$.
Hence the wild inertia group $\Gal(L/M)$ of $L/K$ is isomorphic to the wild inertia group $\Gal(L'/M'')$ of $L'/K'$.
Since $M'\subset M''$, we have 
\[[L:K]\ge [L':K']\ge [L':M''][M':K']=[L:M][M:K]=[L:K].\]
Thus we get $M'=M''$ and the canonical injection $\Gal(L'/K')\to\Gal(L/K)$ is an isomorphism.
Since $L'/M''$ and $M'/K'$ are totally ramified, $L'/K'$ is totally ramified and we have $\Gal(L'/K')_{\text{n-log}}^1=I(L'/K')=\Gal(L'/K')$.
Since $\Gal(L/K)_{\text{n-log}}^1=I(L/K)=\Gal(L/K)$, we get $\Gal(L/K)_{\text{n-log}}^r\cong\Gal(L'/K')_{\text{n-log}}^r$ for any $r\ge 1$.
Moreover, the non-log upper ramification group $\Gal(L'/K')_{\text{n-log}}^r$ is isomorphic to the classical upper ramification group $\Gal(L'/K')^{r-1}$ for $r\ge1$.
Therefore, the assertion follows.
\end{proof}

\begin{theorem}[{cf.\ \cite[Remark 1]{feshas}}]\label{imperfect}
Let $K$ be a complete discrete valuation field of residue characteristic $p>0$.
Let $L/K$ be a finite Galois extension, and let $M/K$ be the maximal tamely ramified subextension of $L/K$.
We assume that the Galois group $\Gal(L/K)$ of $L/K$ is equal to the inertia group $I(L/K)$ of $L/K$.
If $L/M$ is abelian and the non-log upper ramification breaks of $L/K$ are all integers, then $L/K$ is abelian.
\end{theorem}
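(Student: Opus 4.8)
The plan is to reduce to the case of a perfect residue field, where Proposition \ref{kappa} already applies, by base-changing along a tangentially dominant morphism. First I would invoke Lemma \ref{tan-dom} to produce a complete discrete valuation field $K'$ with perfect residue field of residue characteristic $p$ together with a tangentially dominant morphism $\OO_K\to\OO_{K'}$. Setting $L'=LK'$ and $M'=MK'$, I would then apply Lemma \ref{reduce2} to this data, which is legitimate because the hypothesis $\Gal(L/K)=I(L/K)$ of the theorem is exactly the one required there.

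Lemma \ref{reduce2} does essentially all the work: it tells us that $L'/K'$ is a finite totally ramified Galois extension with $\Gal(L'/K')\cong\Gal(L/K)$, that $M'/K'$ is its maximal tamely ramified subextension with $\Gal(L'/M')\cong\Gal(L/M)$, and that $u\ge 1$ is a non-log upper ramification break of $L/K$ if and only if $u-1$ is an upper ramification break of $L'/K'$. The next step is to verify that $L'/K'$ satisfies all the hypotheses of Proposition \ref{kappa}. The residue field of $K'$ is perfect by construction. Since $L'/K'$ is totally ramified, $I(L'/K')=\Gal(L'/K')$, so the inertia-group condition holds. Because $\Gal(L'/M')\cong\Gal(L/M)$ and $L/M$ is abelian by assumption, $L'/M'$ is abelian. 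Finally, since $\Gal(L/K)=I(L/K)=\Gal(L/K)_{\text{n-log}}^1$, the filtration equals the whole group for $r\le 1$, so every non-log upper ramification break $u$ of $L/K$ satisfies $u\ge 1$; by hypothesis each such $u$ is an integer, hence by the break-correspondence of Lemma \ref{reduce2} every upper ramification break of $L'/K'$ equals $u-1$ for some integer $u\ge 1$ and is therefore a nonnegative integer.

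Having checked the hypotheses, I would apply Proposition \ref{kappa} to conclude that $L'/K'$ is abelian, whence $\Gal(L/K)\cong\Gal(L'/K')$ is abelian and $L/K$ is abelian. The genuine conceptual difficulty here---comparing the non-log ramification filtration over the imperfect-residue base $K$ with the classical filtration over the perfect-residue base $K'$---has already been absorbed into Lemma \ref{reduce2}, which rests on the invariance of the non-log upper ramification groups under tangentially dominant base change from \cite[COROLLARY 4.2.6]{graded}. Given that input, the argument is a clean assembly rather than a computation; the only point deserving explicit mention is that the equality $\Gal(L/K)=I(L/K)$ forces all non-log breaks to lie in $[1,\infty)$, so that the shift $u\mapsto u-1$ carries integers to integers and the integrality hypothesis transfers faithfully to $L'/K'$.
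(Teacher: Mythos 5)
Your proposal is correct and follows essentially the same route as the paper's own proof: invoke Lemma \ref{tan-dom} to obtain $K'$ with perfect residue field, apply Lemma \ref{reduce2} to transfer the Galois group, the maximal tamely ramified subextension, and the (shifted) ramification breaks to $L'/K'$, and conclude by Proposition \ref{kappa}. Your explicit check that all non-log breaks of $L/K$ lie in $[1,\infty)$ (so that the shift $u\mapsto u-1$ preserves integrality) is a worthwhile detail that the paper leaves implicit.
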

\begin{proof}
By Lemma \ref{tan-dom}, there exists a complete discrete valuation field $K'$ with the perfect residue field of residue characteristic $p$ and there exists a tangentially dominant morphism $\OO_K\to \OO_{K'}$ of discrete valuation rings. 
Let $L'=LK'$ be a composite field of $L$ and $K'$, and let $M'/K'$ be the maximal tamely ramified subextension of $L'/K'$.
By Lemma \ref{reduce2}, $L'/K'$ is a finite totally ramified Galois extension, and we have $\Gal(L/K)\cong\Gal(L'/K')$ and $\Gal(L/M)\cong\Gal(L'/M')$.
Moreover, the upper ramification breaks of $L'/K'$ are all integers.
By Proposition \ref{kappa}, the Galois group $\Gal(L/K)\cong\Gal(L'/K')$ is abelian.
\end{proof}


\section{Application: a converse to the Hasse-Arf theorem} \label{apply}
Let $K$ be a complete discrete valuation field with the residue field $\Kbar$ of residue characteristic $p>0$ and $L/K$ be a finite Galois extension.
The Hasse-Arf theorem states that if $G=\Gal(L/K)$ is abelian, then the upper ramification breaks of $L/K$ must be integers.
In this section, we consider a converse to the Hasse-Arf theorem.
If the residue field $\Kbar$ is perfect, the following theorem is proved in \cite[Theorem 6.3]{converse}.

\begin{theorem}[{\cite[Theorem 6.3]{converse}}]\label{converse1}
Let $K$ be a complete discrete valuation field with the perfect residue field $\Kbar$ of residue characteristic $p>2$ and let $L/K$ be a finite Galois extension. 
We assume that the Galois group $G=\Gal(L/K)$ of $L/K$ is non-abelian and that $G$ is equal to the inertia group $I(L/K)$ of $L/K$.
Then there exist a complete discrete valuation field $K'$ with the residue field $\Kbar$ and a $G$-extension $L'/K'$ (i.e., the Galois group is isomorphic to $G$) such that the Galois group $G=\Gal(L'/K')$ is equal to the inertia group $I(L'/K')$ of $L'/K'$, and that $L'/K'$ has a non-integral upper ramification break.
\end{theorem}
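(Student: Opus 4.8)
The plan is to construct $L'/K'$ and to \emph{force} a non-integral upper ramification break out of the non-abelianity of $G$, using the contrapositive of Proposition \ref{kappa}. Read contrapositively, that proposition says: if $K'$ has perfect residue field and $L'/K'$ is a finite Galois extension with $\Gal(L'/K')=I(L'/K')$, with maximal tame subextension $M'/K'$, such that $L'/M'$ is abelian and $\Gal(L'/K')$ is \emph{non}-abelian, then $L'/K'$ has a non-integral upper ramification break. Hence it suffices to realize $G$ as $\Gal(L'/K')=I(L'/K')$ over some complete discrete valuation field $K'$ with residue field $\Kbar$ in such a way that the wild part $L'/M'$ is abelian.

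Here lies the first structural point: since $M'/K'$ is the \emph{maximal} tame subextension, $\Gal(L'/M')$ is forced to be the wild inertia $p$-group $P$ of $G$, so $L'/M'$ is abelian only if $P$ is abelian, a property of $G$ we cannot change. To get around this I would pass to the quotient that abelianizes the wild inertia. Put $N=[P,P]$, the commutator subgroup of a $p$-Sylow $P$ of $G$; it is characteristic in $P$, hence normal in $G$, and $\Gbar=G/N$ has abelian wild inertia $P/[P,P]$ and the same tame quotient. Suppose first that $\Gbar$ is non-abelian, equivalently $[G,G]\neq[P,P]$, i.e.\ the tame quotient acts non-trivially on $P^{\mathrm{ab}}$. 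Then I realize $G$ as $\Gal(L'/K')=I(L'/K')$ over a complete discrete valuation field $K'$ with residue field $\Kbar$ --- the given extension $L/K$ already certifies that $G$ is admissible as a totally ramified inertia group over $\Kbar$ (in particular that $\Kbar$ contains the relevant roots of unity), and this is transported to a conveniently chosen $K'$, for instance an equal-characteristic field $\Kbar((t))$. The subextension $(L')^{N}/K'$ is then a totally ramified $\Gbar$-extension with $\Gbar=I$ and abelian wild part; as $\Gbar$ is non-abelian, Proposition \ref{kappa} produces a non-integral upper break of $(L')^{N}/K'$. Since upper numbering is compatible with passage to quotients, a break of $(L')^{N}/K'$ is a break of $L'/K'$, settling this case.

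The hard case is the complementary one, $\Gbar=G/[P,P]$ \emph{abelian}, equivalently $[G,G]=[P,P]$ (so $P$ is non-abelian, since otherwise $G$ would be abelian); here the quotient trick collapses and Proposition \ref{kappa} is unavailable. For this case I would build $L'/K'$ explicitly. Two elements of $P$ with non-trivial commutator generate, after passing to a suitable subquotient, a non-abelian group of order $p^3$; the hypothesis $p>2$ guarantees that this is one of the two non-abelian groups of that order and keeps their ramification behaviour under control. I would realize a matching tower over $\Kbar((t))$ --- and lift it to mixed characteristic via Witt vectors as in case $\mathrm{i})$ of the proof of Proposition \ref{kappa} if a characteristic-$0$ base is desired --- with an abelian bottom layer of Artin--Schreier type and a central Artin--Schreier step on top. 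The upper break contributed by the central step is obtained by feeding its lower break through the Herbrand function of the abelian bottom layer, and by choosing the two lower breaks suitably coprime to $p$ this value comes out non-integral.

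I expect the main obstacle to be precisely this last, explicit step: carrying out the Herbrand-function computation to certify non-integrality while simultaneously verifying that the Galois group of the constructed tower is exactly $G$ (not merely a subquotient or an enlargement), that its residue field is $\Kbar$, and that $\Gal(L'/K')=I(L'/K')$. The reduction through $\Gbar=G/[P,P]$ disposes of every $G$ whose non-commutativity survives abelianizing the wild inertia, so all the genuine work is concentrated in the groups where it does not.
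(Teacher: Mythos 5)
Your first case is essentially the paper's own argument (the paper proves the analogous Theorem \ref{converse2}, and by \cite[Remark 6.4]{converse} the same scheme proves the present statement): the paper splits according to whether the tame action $\psi\colon C_m\to\Aut(P)$ in $G\cong P\rtimes_\psi C_m$ is trivial, which is equivalent to your condition that $G/[P,P]$ is abelian, since a $p'$-automorphism of $P$ acting trivially on $P/[P,P]$ acts trivially on $P/\Phi(P)$ and hence is trivial by the Burnside basis theorem (\cite[Chapter 5 Theorem 1.4]{fingro}). In the non-abelian case the paper simply takes $K'=K$, $L'=L$ and applies the contrapositive of the integrality theorem to the quotient $M=L^{\Phi(P)}$, which is your quotient trick with $\Phi(P)$ in place of $[P,P]$; both work. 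Note, however, that your ``transport'' of the realization of $G$ to $\Kbar((t))$ is both unjustified as stated (no realization result for groups with nontrivial tame action is available) and unnecessary, since the statement permits $K'=K$ and the given $L/K$ is already the witness.

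The genuine gap is in your second case, and it is twofold. First, ``passing to a suitable subquotient'' of order $p^3$ cannot work: upper ramification breaks are inherited only along quotients ($\U_{M/K}\subseteq\U_{L/K}$ for a Galois subextension $M/K$ of $L/K$), never along subgroups, so a tower realizing a non-abelian \emph{subquotient} of $G$ gives no control on the breaks of any $G$-extension; to climb from a small tower to a full $G$-extension preserving $\Gal=I$ and the break one needs the small group to be a \emph{quotient} of $G$, and then an embedding theorem of Witt type (the paper's Lemma \ref{embed}, whose split/non-split dichotomy and rank argument are exactly the delicate points you leave untouched). Second, even the quotient version of your reduction fails as group theory: by \cite[Proposition 4.2]{converse} the minimal non-abelian $p$-groups are the families $H(n,d)$ and $A(n,d)$ of order $p^{2n+d}$ for all $n,d\ge1$, and for $2n+d>3$ every proper quotient of such a group is abelian --- so, e.g., $P=H(2,1)$ has \emph{no} non-abelian quotient of order $p^3$. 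This is precisely why the paper must construct extensions for the entire families (Lemma \ref{minimal}, built inductively from the Heisenberg case of Lemma \ref{H(1,1)}, where the non-integral break $a+p^{-1}b$ arises, and glued by Lemma \ref{composite}) before invoking Lemma \ref{embed} in Theorem \ref{p-group}. Finally, your case 2 omits the tame factor entirely: $\psi$ trivial with $G$ non-abelian gives $G\cong P\times C_m$ with possibly $m>1$, and one must still adjoin a totally ramified cyclic $C_m$-extension --- using $\zeta_m\in\Kbar$, extracted from the given $L/K$ via Lemma \ref{tr-cyclic}, together with Kummer theory --- and verify $\Gal=I$ for the compositum. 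Your Herbrand-function sketch has the right flavor for $H(1,1)$, but, as you yourself concede, the decisive work of the proof is concentrated exactly in the steps your reduction does not reach.
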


We prove a similar theorem without the assumption that the residue field is perfect.
In Theorem \ref{converse2} below, $L/K$ is not necessarily totally ramified, while $L/K$ is totally ramified in Theorem \ref{converse1}.

\begin{theorem}[{cf.\ \cite[Theorem 6.3]{converse}}]\label{converse2}
Let $K$ be a complete discrete valuation field with the residue field $\Kbar$ of residue characteristic $p>2$ and let $L/K$ be a finite Galois extension. 
We assume that the Galois group $G=\Gal(L/K)$ of $L/K$ is non-abelian and that $G$ is equal to the inertia group $I(L/K)$ of $L/K$.
Then there exist a complete discrete valuation field $K'$ with the residue field $\Kbar$ and a $G$-extension $L'/K'$ such that the Galois group $G=\Gal(L'/K')$ is equal to the inertia group $I(L'/K')$ of $L'/K'$, and that $L'/K'$ has a non-integral non-log upper ramification break.
\end{theorem}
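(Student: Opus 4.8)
The plan is to transfer the perfect-residue-field result, Theorem \ref{converse1}, to residue field $\Kbar$ by reversing the reduction used to prove Theorem \ref{imperfect}. First I would feed the given data into Theorem \ref{converse1}. Since $\Gal(L/K)=I(L/K)=G$, the residue extension of $L/K$ is purely inseparable, so by Lemma \ref{tan-dom} I obtain a tangentially dominant morphism $\OO_K\to\OO_{K_0}$ into a complete discrete valuation field $K_0$ whose residue field I may take to be the perfect closure $\Kbar^{\mathrm{perf}}=\bigcup_{n\ge0}\Kbar^{p^{-n}}$. Because this residue field is perfect and $\overline{L}$ is purely inseparable over $\Kbar$, the residue extension of $L_0=LK_0$ over $K_0$ becomes trivial, so by Lemma \ref{reduce2} the extension $L_0/K_0$ is totally ramified Galois with $\Gal(L_0/K_0)\cong G$ non-abelian and $G=I(L_0/K_0)$. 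Applying Theorem \ref{converse1} to $L_0/K_0$ then produces a complete discrete valuation field $K_2$ with residue field $\Kbar^{\mathrm{perf}}$ and a totally ramified $G$-extension $L_2/K_2$, with $G=\Gal(L_2/K_2)=I(L_2/K_2)$, having a non-integral classical upper ramification break.

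It then remains to push $L_2/K_2$ back down to residue field $\Kbar$, for which I would use Lemma \ref{reduce2} in the opposite direction. It suffices to construct a complete discrete valuation field $K'$ with residue field isomorphic to $\Kbar$, a totally ramified Galois extension $L'/K'$ with $\Gal(L'/K')\cong G$, and a tangentially dominant morphism $\OO_{K'}\to\OO_{K_2}$ with $L_2=L'K_2$. Granting this, Lemma \ref{reduce2} identifies the non-log upper ramification break $u$ of $L'/K'$ with the classical break $u-1$ of $L_2/K_2$; since the latter is non-integral, $L'/K'$ has a non-integral non-log upper ramification break, while $G=\Gal(L'/K')=I(L'/K')$ holds automatically because $L'/K'$ is totally ramified. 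Finally, since $\Kbar^{p^{-n}}\cong\Kbar$ via the $p^n$-power map for every $n$, a residue field isomorphic to $\Kbar$ is exactly what the statement requires.

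The construction of $(K',L')$ is the main obstacle. The totally ramified extension $L_2/K_2$ is cut out by an Eisenstein polynomial over $\OO_{K_2}$ together with finitely many polynomial relations expressing the conjugates of a uniformizer over $\OO_{K_2}$, and the natural candidate for $K'$ is obtained by descending the residue field of $K_2$ along the perfection map, so that the resulting $\OO_{K'}\to\OO_{K_2}$ is tangentially dominant by the argument behind \cite[PROPOSITION 1.1.8 (b)]{graded} and Lemma \ref{reduce2} applies. The delicate point is that the coefficients of the defining data need only lie in $\Kbar^{\mathrm{perf}}$, hence a priori in $\Kbar^{p^{-n}}$ for unbounded $n$; I must arrange the example $L_2/K_2$ so that these coefficients descend to a single $\Kbar^{p^{-n}}$ (that is, to a copy of $\Kbar$) and so that base-changing back up recovers $L_2$ with $\Gal(L'/K')\cong G$ intact. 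Controlling this descent, and carrying out the analogous construction over a Cohen ring with residue field $\Kbar$ for the characteristic-$0$ refinement, is precisely where the explicit lemmas of Section \ref{lemma} enter.
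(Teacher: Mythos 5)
Your steps 1--3 (lifting $L/K$ along a tangentially dominant morphism via Lemmas \ref{tan-dom} and \ref{reduce2}, then invoking Theorem \ref{converse1} over the perfect residue field) are fine, but the whole theorem is concentrated in your step 4, the descent of $(K_2,L_2)$ back down to residue field $\Kbar$, and that step is not carried out --- it is only stated as a goal. It cannot be patched by citing the tools you name. First, Lemma \ref{reduce2} only goes one way: it starts from an extension over the small field and forms the composite upstairs; it provides no descent mechanism, and Theorem \ref{converse1} hands you $L_2/K_2$ as a black box, with no control that its defining data (Eisenstein polynomial and conjugation relations) lie in a complete subfield with residue field $\Kbar$, let alone one mapped compatibly to $\OO_{K_2}$. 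Second, and more fundamentally, even if the field extension did descend, the break-transfer would fail for the natural map: an inclusion such as $\Kbar[[t]]\hookrightarrow \Kbar^{\mathrm{perf}}[[t]]$ is \emph{never} tangentially dominant, because every $x\in\Kbar$ becomes a $p$-th power in $\Kbar^{\mathrm{perf}}$, so the differential directions of $\Kbar$ die in $H_1(L_{\overline{K_2}^{alg}/\OO_{K_2}})$ and injectivity of the map of symmetric algebras fails. Tangentially dominant morphisms into a perfect-residue-field DVR must twist residue directions into the maximal ideal (adjoining $u$ with $u^p=x+\pi$ rather than $u^p=x$, as in Saito's construction behind Lemma \ref{tan-dom}), and there is no reason the specific $L_2$ is compatible with such a twist. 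For the same reason your claim that the residue field of $K_0$ ``may be taken to be $\Kbar^{\mathrm{perf}}$'' is not licensed by Lemma \ref{tan-dom} as stated. So the proposal reduces the theorem to an unproven descent claim that is essentially as hard as the theorem itself.

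The paper avoids descent entirely, and this is why Section \ref{lemma} looks the way it does. Writing $G\cong P\rtimes_{\psi}C_m$ with $P$ the wild inertia group, it splits into two cases. If $\psi$ is nontrivial, no new field is needed: passing to the Frattini quotient $M=L^{\Phi(P)}$, the group $\Gal(M/K)\cong(P/\Phi(P))\rtimes_{\psibar}C_m$ is non-abelian with abelian wild part, so Theorem \ref{imperfect} (the generalized Hasse--Arf property) forces a non-integral non-log break of $M/K$, hence of $L/K$ itself, and one takes $K'=K$, $L'=L$. If $\psi$ is trivial, then $P$ is non-abelian and the example is built \emph{directly} over $K'=\Kbar((t))$: Theorem \ref{p-group} constructs a $P$-extension with a non-integral non-log break by explicit $H(1,1)$-type Artin--Schreier constructions (Lemmas \ref{composite}, \ref{embed}, \ref{H(1,1)}, \ref{minimal}), and this is composed with a tame Kummer $C_m$-extension of $X^m-t$, using Lemma \ref{tr-cyclic} to guarantee $\zeta_m\in\Kbar$. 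In that architecture the perfect-residue-field world is only ever used in the ``upward'' direction (inside Lemma \ref{H(1,1)} and Lemma \ref{minimal}, to compute breaks via Lemma \ref{reduce2}), never descended from.
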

\begin{proof}
Let $P$ be the wild inertia group of $L/K$ and put $m=(G:P)$.
Then we have $G/P\cong C_m=\Z/m\Z$, and $G\cong P\rtimes_{\psi}C_m$ for some homomorphism $\psi\colon C_m\to\Aut(P)$.

In the case $\psi$ is trivial, $P$ is not abelian.
Put $K'=\Kbar((t))$.
By Theorem \ref{p-group} proved in section \ref{lemma}, there exists a $P$-extension $F'/K'$ such that the Galois group $P$ is equal to the inertia group $I(F'/K')$ of $F'/K'$, and that $F'/K'$ has a non-integral non-log upper ramification break.
Since $K$ has a totally ramified $C_m$-extension and $p\nmid m$, $\Kbar$ contains a primitive $m$-th root of unity by Lemma \ref{tr-cyclic} proved in Section \ref{lemma}.
Hence $K'$ contains a primitive $m$-th root of unity $\zeta_m$.
Since $f(X)=X^m-t\in K'[X]$ is an Eisenstein polynomial, the extension $M'/K'$ generated by a root of $f$ is a totally ramified extension of degree $m$.
Moreover, since $\zeta_m\in K'$, $M'/K'$ is a cyclic extension by the Kummer theory.
Put $L'=F'M'$.
Since $p\nmid m$, we have $F'\cap M'=K'$ and $L'/K'$ is a $G$-extension.
Since \[P=I(F'/K')=I(L'/K')G(L'/F')/G(L'/F')\] and \[C_m=I(M'/K')=I(L'/K')G(L'/M')/G(L'/M'),\]
the order of $I(L'/K')$ is divisible by $|P|$ and $m$.
Thus $I(L'/K')$ is equal to $G=\Gal(L'/K')$, since $P$ is a $p$-group and $p\nmid m$.
Moreover, since $F'\subset L'$, $L'/K'$ has a non-integral non-log upper ramification break.

In the case $\psi$ is nontrivial, we claim that $L/K$ has a non-integral non-log upper ramification break.
Let $\Phi(P)$ be the Frattini subgroup of $P$ (i.e., the intersection of all maximal subgroups of $P$) and let $M=L^{\Phi(P)}$ be the fixed field of $\Phi(P)$.
Let $f$ be an automorphism of $P$ and let $Q$ be a maximal subgroup of $P$.
Then $f^{-1}(Q)$ is a maximal subgroup of $P$.
Thus we have $f(\Phi(P))\subset f(f^{-1}(Q))=Q$ and hence $f(\Phi(P))\subset\Phi(P)$.
Therefore, $\Phi(P)$ is a normal subgroup of $P$ and $M/K$ is a Galois extension.
Moreover, we have $\Gal(M/K)=\Gal(L/K)/\Gal(L/M)=I(L/K)/\Gal(L/M)=I(M/K)$.
Put $\Pbar=P/\Phi(P)$ and let $\psibar\colon C_m\to\Aut(\Pbar)$ be the homomorphism induced by $\psi$.
Then $\psibar$ is nontrivial by \cite[Chapter 5 Theorem 1.4]{fingro}.
Thus the Galois group $\Gal(M/K)\cong\Pbar\rtimes_{\psibar}C_m$ is not abelian.
Let $M'/K$ be the maximal tamely ramified subextension of $M/K$.
Then the Galois group $\Gal(M/M')=\Pbar$ is abelian by \cite[(23.2)]{asch}.
By applying Theorem \ref{imperfect} to $M/M'/K$, $M/K$ has a non-integral non-log upper ramification break.
Hence $L/K$ has a non-integral non-log upper ramification break.
\end{proof}

\cite[Theorem 6.5]{converse} shows that there exists a $G$-extension of fields of characteristic $0$ which satisfies the conclusion of Theorem \ref{converse1}.
We prove a similar theorem without the assumption that the residue field is perfect.

\begin{theorem}[{cf.\ \cite[Theorem 6.5]{converse}}]\label{converse3}
Let $K$ be a complete discrete valuation field with the residue field $\Kbar$ of residue characteristic $p>2$ and let $L/K$ be a finite Galois extension. 
We assume that the Galois group $G=\Gal(L/K)$ of $L/K$ is non-abelian and that $G$ is equal to the inertia group $I(L/K)$ of $L/K$.
Then there exist a complete discrete valuation field $K'$ of characteristic $0$ with the residue field $\Kbar$ and a $G$-extension $L'/K'$ such that the Galois group $G=\Gal(L'/K')$ is equal to the inertia group $I(L'/K')$ of $L'/K'$, and that $L'/K'$ has a non-integral non-log upper ramification break.
\end{theorem}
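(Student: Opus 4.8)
The plan is to follow the structure of the proof of Theorem \ref{converse2}, replacing the equal-characteristic base field by one of characteristic $0$ with the same residue field. As there, write $P=P(L/K)$ for the wild inertia group, so that $P$ is the normal $p$-Sylow subgroup of $G$; put $m=(G:P)$ with $p\nmid m$, and fix a splitting $G\cong P\rtimes_{\psi}C_m$ with $\psi\colon C_m\to\Aut(P)$. Since $G=I(L/K)$ has $C_m$ as its tame quotient, the maximal tame subextension of $L/K$ is a totally ramified $C_m$-extension, so Lemma \ref{tr-cyclic} gives $\zeta_m\in\Kbar$. By Cohen's structure theorem there is an absolutely unramified complete discrete valuation ring $\OO_0$ of characteristic $0$ with residue field $\Kbar$; set $K_0=\Frac(\OO_0)$, a characteristic-$0$ complete discrete valuation field with residue field $\Kbar$. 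Since $p\nmid m$, a primitive $m$-th root of unity lifts from $\Kbar$ to $\OO_0$ by Hensel's lemma, so $K_0$ contains $\zeta_m$. The field $K'$ produced below will be a tame totally ramified extension of $K_0$, hence again of characteristic $0$ with residue field $\Kbar$.

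When $\psi$ is trivial we have $G=P\times C_m$ with $P$ non-abelian. Here I would first produce, over a characteristic-$0$ field $K'$ with residue field $\Kbar$, a $P$-extension $F'/K'$ with $I(F'/K')=P$ and a non-integral non-log upper ramification break: this is a mixed-characteristic analogue of Theorem \ref{p-group}, which I expect to be established in Section \ref{lemma}. I would then build the tame part exactly as in Theorem \ref{converse2}: since $\zeta_m\in K'$ and $X^m-\pi_{K'}$ is Eisenstein for a uniformizer $\pi_{K'}$, adjoining a root gives a totally ramified cyclic $C_m$-extension $M'/K'$ by Kummer theory. As $p\nmid m$ one has $F'\cap M'=K'$, so $L'=F'M'$ is a $G$-extension, $I(L'/K')=G$ by the same $p$-group/prime-to-$p$ order argument as in Theorem \ref{converse2}, and $L'/K'$ inherits the non-integral non-log break from $F'$.

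When $\psi$ is nontrivial the break will again be forced by group theory rather than constructed by hand. The task is then to realize $G$ as a full inertia group over characteristic $0$: I would construct a characteristic-$0$ complete discrete valuation field $K'$ with residue field $\Kbar$ together with a $G$-extension $L'/K'$ satisfying $I(L'/K')=G$. Once such an $L'/K'$ is in hand, set $M'=L'^{\Phi(P)}$; since $\Phi(P)$ is characteristic in $P$ and $P\triangleleft G$, the subgroup $\Phi(P)$ is normal in $G$ and $M'/K'$ is Galois with $\Gal(M'/K')\cong\Pbar\rtimes_{\psibar}C_m$, which is non-abelian because the induced action $\psibar$ is nontrivial by \cite[Chapter 5 Theorem 1.4]{fingro}. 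The wild inertia of $M'/K'$ is $\Pbar=P/\Phi(P)$, which is abelian, so $M'/(\text{its maximal tame subextension})$ is abelian; applying the contrapositive of Theorem \ref{imperfect} to $M'/K'$ shows that $M'/K'$, and hence $L'/K'$, has a non-integral non-log upper ramification break.

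The main obstacle is the characteristic-$0$ construction of the wild part in both cases: producing over a mixed-characteristic base a $p$-group extension with prescribed Galois group, full inertia, and—in the $\psi$-trivial case—a controlled non-integral break, and realizing the full semidirect product $G$ together with its tame action when $\psi$ is nontrivial. In equal characteristic these are handled by Artin--Schreier--Witt theory, which is unavailable here; in characteristic $0$ one must substitute a Kummer-theoretic construction using the roots of unity furnished by $\zeta_m\in K_0$ (after a suitable tame base change), encoding the $C_m$-action on the defining Kummer data, or equivalently lift the equal-characteristic extension of Theorem \ref{converse2} to characteristic $0$ by a deformation argument preserving the ramification filtration. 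This mixed-characteristic realization, which I expect to occupy Section \ref{lemma}, is the technical heart of the proof.
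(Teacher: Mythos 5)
There is a genuine gap: the entire weight of your argument rests on a ``mixed-characteristic realization'' that you never carry out, and which is not available in the paper in the form you expect. In the $\psi$-trivial case you invoke ``a mixed-characteristic analogue of Theorem \ref{p-group}'' and in the $\psi$-nontrivial case a characteristic-$0$ realization of $G$ as a full inertia group, both deferred to Section \ref{lemma}; no such analogues exist there, and they are precisely the hard part. The machinery behind Theorem \ref{p-group} (Witt's theorem \cite{witK} in Lemma \ref{embed}, and the explicit Artin--Schreier constructions of Lemmas \ref{H(1,1)} and \ref{minimal}) is specific to characteristic $p$, and your proposed Kummer substitute does not work as stated: you insist that $K'$ be a \emph{tame} totally ramified extension of the absolutely unramified field $K_0$, but Kummer theory for $p$-power extensions needs $p$-power roots of unity, and already $\zeta_{p^2}\in K'$ forces $p\mid\e(K')$, so $K'/K_0$ cannot be tame (such roots are needed, e.g., for the $C_{p^{d+1}}$-subextension in the $A(1,d)$ construction). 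Worse, in mixed characteristic the non-log upper ramification breaks of $p$-extensions are bounded in terms of the absolute ramification index $\e(K')$, so the inductive choices of arbitrarily large breaks $a>b>v$ on which Lemma \ref{minimal} depends are impossible over any fixed base; the base would have to grow with $G$, and none of this is addressed. Your closing alternative --- ``lift the equal-characteristic extension of Theorem \ref{converse2} to characteristic $0$ by a deformation argument preserving the ramification filtration'' --- gestures at the right idea but is not something one can improvise; it is a deep theorem, not a routine deformation.

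What the paper actually does is much shorter and constructs nothing in mixed characteristic. By Theorem \ref{converse2} one may assume there is a characteristic-$p$ example $L_1/K_1$ with residue field $\Kbar$, full inertia, and a non-integral non-log break. One then chooses \emph{any} complete discrete valuation field $K_2$ of characteristic $0$ with residue field $\Kbar$ whose absolute ramification index $\e(K_2)$ exceeds all non-log breaks of $L_1/K_1$, and applies Lemma \ref{category} with $j=\e(K_2)$: Hattori's equivalence of categories $\FE_{K_1}^{<j}\to\FE_{K_2}^{<j}$ (\cite[Corollary 4.18]{hattori}, proved by perfectoid techniques) transports $L_1/K_1$ to a $G$-extension $L_2/K_2$ with the \emph{same} non-log upper ramification breaks, whence $I(L_2/K_2)=\Gal(L_2/K_2)_{\text{n-log}}^1=G$ and the non-integral break persists. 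Note that this makes your case division on $\psi$, the tame Kummer construction, and the Frattini/Theorem \ref{imperfect} argument unnecessary for this theorem --- they were already consumed in proving Theorem \ref{converse2} --- and that the quantitative hypothesis $j\le\min_i\e(K_i)$ of Lemma \ref{category}, hence the choice of $K_2$ with large absolute ramification, is an essential point absent from your sketch. To repair your proposal, replace the hoped-for mixed-characteristic constructions by this single transfer step.
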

\begin{proof}
By Theorem \ref{converse2}, we can assume that there exist a complete discrete valuation field $K_1$ of characteristic $p$ with the residue field $\Kbar$ and a $G$-extension $L_1/K_1$ such that the Galois group $G=\Gal(L_1/K_1)$ is equal to the inertia group $I(L_1/K_1)$ of $L_1/K_1$, and that $L_1/K_1$ has a non-integral non-log upper ramification break.
We take a complete discrete valuation field $K_2$ of characteristic $0$ with the residue field $\Kbar$ whose absolute ramification index $\e(K_2)$ is larger than all the non-log upper ramification breaks of $L_1/K_1$.
By Lemma \ref{category} in the case $j=\e(K_2)$, there exists a $G$-extension $L_2/K_2$ such that the Galois group $G=\Gal(L_2/K_2)$ is equal to the inertia group $I(L_2/K_2)$ of $L_2/K_2$, and that $L_2/K_2$ has a non-integral non-log upper ramification break.
\end{proof}

\section{Lemmas} \label{lemma}

Let $K$ be a complete discrete valuation field with the residue field $\Kbar$, which is not necessarily perfect, of residue characteristic $p>0$, and let $L/K$ be a finite Galois extension of degree $p^n$.
If $u$ is a non-log upper ramification break of $L/K$, then we have $(\Gal(L/K)_{\text{n-log}}^u:\Gal(L/K)_{\text{n-log}}^{u+\epsilon})=p^d$ for a sufficiently small $\epsilon>0$ and some $d\ge1$. 
In this case we say that $u$ is a {\it non-log upper ramification break with multiplicity} $d$.
The non-log upper ramification breaks of $L/K$, counted with multiplicities, form a multiset with the cardinality $n$, which we denote by $\U_{L/K}$.
For example, if $L/K$ is a finite Galois extension of degree $p^3$ and if $L/K$ has the unique non-log upper ramification break $u$, then $u$ has the multiplicity $3$ and we have $\U_{L/K}=\{u,u,u\}$.
If a subextension $M/K$ of $L/K$ is a Galois extension, it follows that $\U_{M/K}\subset\U_{L/K}$, since the non-log upper ramification groups are compatible with quotients by \cite[Remark 3.5]{abbes-saito}.

\begin{lemma} \label{composite}
Let $K$ be a complete discrete valuation field of residue characteristic $p>0$.
For $i=1,2$, let $L_i/K$ be a finite Galois extension of degree $p^{n_i}$.
We assume that the Galois group $\Gal(L_i/K)$ of $L_i/K$ is equal to the (wild) inertia group $I(L_i/K)=P(L_i/K)$ of $L_i/K$ for $i=1,2$.
Let $L=L_1L_2$ be the composite field of $L_1$ and $L_2$, and let $F/K$ be an extension of degree $p^m$ such that $F\subseteq L_1\cap L_2$.
We assume that the cardinality of $\U_{L_1/K}\cap\U_{L_2/K}$ is equal to $m$.
Then $L/K$ is a finite Galois extension of degree $p^{n_1+n_2-m}$ whose Galois group is equal to the (wild) inertia group $I(L/K)=P(L/K)$.
In particular, if $\U_{L_1/K}\cap\U_{L_2/K}=\emptyset$, then $L/K$ is a Galois extension of degree $p^{n_1+n_2}$ whose Galois group is isomorphic to $\Gal(L_1/K)\times\Gal(L_2/K)$ and is equal to the (wild) inertia group $I(L/K)=P(L/K)$.
\end{lemma}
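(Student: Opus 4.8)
The plan is to pin down the degree $[L:K]$ first, and then to use the multiset $\U_{L/K}$ of non-log upper ramification breaks to show that the residue extension of $L/K$ acquires no separable part, i.e.\ that $I(L/K)=\Gal(L/K)$. Since $L_1/K$ and $L_2/K$ are Galois, so is $L=L_1L_2$ over $K$, and restriction embeds $\Gal(L/K)$ into $\Gal(L_1/K)\times\Gal(L_2/K)$ with image the fiber product over $\Gal((L_1\cap L_2)/K)$; in particular $[L:K]=p^{n_1+n_2}/[L_1\cap L_2:K]$, and $[L_1\cap L_2:K]=p^{j}$ is a power of $p$. To see $j=m$, I would note that $F\subseteq L_1\cap L_2$ with $[F:K]=p^m$ gives $j\ge m$, while $(L_1\cap L_2)/K$ is a Galois subextension of both $L_1/K$ and $L_2/K$, so $\U_{(L_1\cap L_2)/K}$ is a sub-multiset of each of $\U_{L_1/K}$ and $\U_{L_2/K}$, hence of their intersection; this yields $j=|\U_{(L_1\cap L_2)/K}|\le|\U_{L_1/K}\cap\U_{L_2/K}|=m$. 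Thus $[L:K]=p^{n_1+n_2-m}$, so $\Gal(L/K)$ is a $p$-group and $|\U_{L/K}|=n_1+n_2-m$.

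The heart of the argument is the identity $\U_{L/K}=\U_{L_1/K}\cup\U_{L_2/K}$ (multiset union). Since $L_1/K$ and $L_2/K$ are Galois subextensions of $L/K$, both $\U_{L_1/K}$ and $\U_{L_2/K}$ are sub-multisets of $\U_{L/K}$, hence so is their union; comparing cardinalities, $|\U_{L_1/K}\cup\U_{L_2/K}|=|\U_{L_1/K}|+|\U_{L_2/K}|-|\U_{L_1/K}\cap\U_{L_2/K}|=n_1+n_2-m=|\U_{L/K}|$, which forces equality. Now the hypothesis $P(L_i/K)=\Gal(L_i/K)$ means, via $P(L_i/K)=\bigcup_{r>1}\Gal(L_i/K)_{\text{n-log}}^r$, that $\Gal(L_i/K)_{\text{n-log}}^{1+\epsilon}=\Gal(L_i/K)$ for all sufficiently small $\epsilon>0$, i.e.\ every element of $\U_{L_i/K}$ is $>1$; by the identity, every element of $\U_{L/K}$ is then $>1$ as well.

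Finally, since $L/K$ has no non-log upper ramification break in $[0,1]$, we get $\Gal(L/K)_{\text{n-log}}^1=\Gal(L/K)$, that is $I(L/K)=\Gal(L/K)$; as $\Gal(L/K)$ is a $p$-group, its tame quotient $I(L/K)/P(L/K)$ is trivial, so $P(L/K)=I(L/K)=\Gal(L/K)$, which is the main assertion. For the last sentence, $\U_{L_1/K}\cap\U_{L_2/K}=\emptyset$ forces $m=0$ and hence $L_1\cap L_2=K$, so the fiber product above is the full direct product and $\Gal(L/K)\cong\Gal(L_1/K)\times\Gal(L_2/K)$.

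I expect the main obstacle to be conceptual rather than computational: one must resist the false expectation that a composite of totally (wildly) ramified extensions is again totally ramified --- it need not be, as $\Q_2(\sqrt2,\sqrt5)/\Q_2$ shows, where two wildly ramified quadratic extensions compose to produce an unramified quadratic subextension --- and recognize that the hypothesis $|\U_{L_1/K}\cap\U_{L_2/K}|=m$ is precisely what prevents the composite from acquiring such a separable residue contribution. Accordingly, the genuine content is the multiset identity $\U_{L/K}=\U_{L_1/K}\cup\U_{L_2/K}$, and the care lies in the multiset bookkeeping, in particular in invoking that $\U_{M/K}$ is a sub-multiset of $\U_{L/K}$ for every Galois subextension $M/K$, which follows from the compatibility of the non-log ramification filtration with quotients.
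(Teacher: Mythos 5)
Your proof is correct and takes essentially the same route as the paper's: you identify $L_1\cap L_2=F$ from $\U_{(L_1\cap L_2)/K}\subseteq\U_{L_1/K}\cap\U_{L_2/K}$, force the multiset identity $\U_{L/K}=\U_{L_1/K}\cup\U_{L_2/K}$ by the same cardinality count using compatibility of the non-log filtration with quotients, and deduce $\Gal(L/K)=\Gal(L/K)_{\text{n-log}}^{1}=I(L/K)=P(L/K)$ from the location of the breaks (you get breaks $>1$ via $P=\bigcup_{r>1}\Gal_{\text{n-log}}^{r}$, the paper gets $\ge 1$ via $I=\Gal_{\text{n-log}}^{1}$ --- an immaterial variant, and your explicit $p$-group argument for $P=I$ is a detail the paper leaves tacit). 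One slip in your motivational aside only: $\Q_2(\sqrt{5})/\Q_2$ is unramified (as $5\not\equiv 1\bmod 8$), so to exhibit two ramified quadratics whose compositum contains an unramified subextension you should present the same field as $\Q_2(\sqrt{2},\sqrt{10})$; this does not affect the proof itself.
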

\begin{proof}
Put $N=L_1\cap L_2$.
Then $N/K$ is a finite Galois extension, and we have $\U_{N/K}\subseteq\U_{L_1/K}\cap\U_{L_2/K}$.
Thus we get $[N:K]\le p^m$, and we have $N=F$.
Hence $L/K$ is a finite Galois extension of degree $p^{n_1+n_2-m}$.
Since the cardinality of $\U_{L_1/K}\cup\U_{L_2/K}\subseteq\U_{L/K}$ is equal to $n_1+n_2-m$, we get $\U_{L_1/K}\cup\U_{L_2/K}=\U_{L/K}$.
Since $\Gal(L_i/K)=I(L_i/K)=\Gal(L_i/K)_{\text{n-log}}^1$, every $u\in\U_{L_1/K}\cup\U_{L_2/K}$ satisfies $u\ge1$.
Therefore, we have $\Gal(L/K)=I(L/K)=\Gal(L/K)_{\text{n-log}}^1$.
If $\U_{L_1/K}\cap\U_{L_2/K}=\emptyset$, then we can take $F=K$ and the last assertion also follows.
\end{proof}

For a finite $p$-group $G$, the smallest cardinality of generating sets for $G$ is denoted by $\rank(G)$. 

\begin{lemma}[{cf.\ \cite[Corollary 3.2]{converse}}]\label{embed}
Let $K$ be a complete discrete valuation field of characteristic $p>0$ and let $L/K$ be a finite Galois extension whose Galois group $G=\Gal(L/K)$ is a $p$-group.
We assume that the Galois group $G=\Gal(L/K)$ of $L/K$ is equal to the (wild) inertia group $I(L/K)=P(L/K)$ of $L/K$.
Let $\Gt$ be a finite $p$-group and let $\phi:\Gt\to G$ be a surjective group homomorphism.
Then there exists an extension $M/L$ with the following properties:
\begin{enumerate}[$(1)$]
\item $M/K$ is a Galois extension and there exists an isomorphism of exact sequences
\begin{equation*}
\begin{array}{*{9}c}
1&\Lrightarrow&\Gal(M/L)&\Lrightarrow&\Gal(M/K)
&\Lrightarrow&\Gal(L/K)&\Lrightarrow&1 \\[1mm]
&&\Ldownarrow&&\Ldownarrow&&\parallel&& \\[-1mm]
1&\Lrightarrow&\ker\phi&\Lrightarrow&\Gt
&\overset{\tst\phi}{\Lrightarrow}&G&\Lrightarrow&1.
\end{array}
\end{equation*}
\item The Galois group $G=\Gal(M/K)$ of $M/K$ is equal to the (wild) inertia group $I(M/K)=P(M/K)$ of $M/K$.
\end{enumerate}
\end{lemma}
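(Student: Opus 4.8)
The plan is to realize $M/K$ as a solution of the Galois embedding problem attached to $\phi$ and the surjection $G_K=\Gal(K^{\mathrm{sep}}/K)\twoheadrightarrow G$ cut out by $L/K$, while choosing the solution so that $M/K$ stays totally ramified. First I would reduce to the case where $\ker\phi$ is central of order $p$. Since $\Gt$ is a $p$-group and $\ker\phi$ is a nontrivial normal subgroup, $\ker\phi\cap Z(\Gt)\neq1$, so it contains a central subgroup $Z$ of order $p$. Applying the statement inductively (on $|\ker\phi|$) to $\Gt/Z\to G$ produces $M_1/L$ with $\Gal(M_1/K)\cong\Gt/Z$ and $\Gal(M_1/K)=I(M_1/K)=P(M_1/K)$ lifting $L/K$; it then remains to lift $M_1/K$ along the central extension $1\to Z\to\Gt\to\Gt/Z\to1$. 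Composing the two isomorphisms of exact sequences gives the diagram for $\phi$. So we may assume $\ker\phi=C_p$ is central, replace $L$ by $M_1$, and treat the following building block: given $L/K$ with $\Gal(L/K)=G=I=P$ a $p$-group and a central extension $1\to C_p\to\Gt\xrightarrow{\phi}G\to1$, construct a $C_p$-extension $M/L$ with $M/K$ Galois of group $\Gt$ and $\Gal(M/K)=I(M/K)=P(M/K)$.

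For solvability I would use that $\ch K=p$ forces the $p$-cohomological dimension of $G_K$ to be at most $1$ (a standard fact for fields of characteristic $p$). Hence $H^2(G_K,\F_p)=0$, so the central embedding problem defined by $\phi$ admits a homomorphic lift $\psi_0\colon G_K\to\Gt$ with $\phi\circ\psi_0$ equal to the projection $G_K\to G$; equivalently, the maximal pro-$p$ quotient $G_K(p)$ is free pro-$p$. The set of such lifts is a torsor under $\Hom(G_K,C_p)\cong K/\wp(K)$, and, because $C_p$ is central, twisting $\psi_0$ by a character $\chi_a$ with $a\in K$ yields another lift $\psi=\psi_0\cdot\chi_a$. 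Via Artin--Schreier theory the extension $M/L$ cut out by $\psi$ corresponds to the class $\psi|_{G_L}\in\Hom(G_L,C_p)\cong L/\wp(L)$, and passing from $\psi_0$ to $\psi$ changes this class by the image $[a]_L$ of $[a]_K$ under the natural map $K/\wp(K)\to L/\wp(L)$.

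The crux is choosing the twist to force total ramification. Since $I(L/K)=G$ and $C_p$ is central, $I(M/K)$ surjects onto $G$ and satisfies $I(M/K)\cdot\Gal(M/L)=\Gt$, whence $\Gt/I(M/K)\cong\Gal(M/L)/I(M/L)$; thus $M/K$ is totally ramified exactly when the $C_p$-extension $M/L$ is. As a $C_p$-extension of a complete discrete valuation field, $M/L$ is either unramified or totally wildly ramified, so it suffices to make its Artin--Schreier class ramified and nonzero. I would take $a\in K$ with $v_K(a)=-m$, $p\nmid m$, and $m$ larger than every upper ramification break of $L/K$ (and than the break of a reduced representative of $\psi_0|_{G_L}$, if that class is ramified). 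Then $K(\theta)/K$ with $\wp(\theta)=a$ is totally ramified of break $m$, hence is not a subextension of $L/K$, and by Herbrand its pullback to $L$ remains ramified with large break, so $[a]_L$ is ramified of very negative valuation; adding it to $\psi_0|_{G_L}$ keeps the class ramified and nonzero. For this $\psi$, the extension $M/L$ is a nontrivial totally ramified $C_p$-extension, which forces $\psi(G_L)=C_p$ and therefore $\psi$ surjective. Consequently $\Gal(M/K)\cong\Gt$ with $\psi(G_L)=\ker\phi=\Gal(M/L)$, giving the required isomorphism of exact sequences, and total ramification of $M/L$ yields $I(M/K)=\Gt=\Gal(M/K)$, which equals $P(M/K)$ as $\Gt$ is a $p$-group.

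The step I expect to be the main obstacle is exactly this ramification control in the imperfect-residue-field setting. Unlike the perfect case of \cite[Corollary 3.2]{converse}, one cannot normalize an Artin--Schreier class to have reduced valuation prime to $p$, so the total-ramification criterion has to be phrased through the (non-log) inertia groups---the residue extensions here are purely inseparable---and the ramification of $[a]_L$ must be established by a Herbrand-transform argument for a high-break character rather than by a valuation computation. The required supply of such characters is guaranteed by the infiniteness of $K/\wp(K)$ (for every $m$ prime to $p$ there is an Artin--Schreier character of $K$ of break $m$). By contrast, the inductive dévissage to a central $C_p$-kernel and the cohomological solvability are formal.
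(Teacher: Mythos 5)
Your reduction to a central $C_p$-kernel by induction and your solvability input (in characteristic $p$ one has $\mathrm{cd}_p(G_K)\le1$, equivalently Witt's theorem, which is what the paper cites) match the paper, but your uniform ``twist by a high-break character'' strategy has a genuine gap exactly where you flagged it. To know that the twisted class $\psi_0|_{G_L}+[a]_L$ still cuts out a non-unramified extension of $L$, you must control the non-log break of $[a]_L$ \emph{over $L$} --- in particular that it differs from the break of $\psi_0|_{G_L}$ when that class is ramified, since two characters with the same break can sum to an unramified (or trivial) class. In the classical perfect-residue setting this is Herbrand via $\psi_{L/K}$, but for the Abbes--Saito filtration there is no such transform for restriction to the subgroup $G_L\subseteq G_K$: the filtration is compatible with quotients only, and indeed the paper never passes ramification data along a base change $K\to L$ (its Lemma \ref{composite} counts breaks of composites over the fixed base $K$). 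So your sentence ``by Herbrand its pullback to $L$ remains ramified with large break'' asserts precisely the missing tool rather than proving it; note also that $v_L(a)=\e(L/K)v_K(a)$ can be divisible by $p$, so no valuation computation rescues this. Relatedly, your dichotomy ``unramified or totally wildly ramified'' for $C_p$-extensions is false over imperfect residue fields (ferociously ramified and defect extensions occur); this is harmless for property $(2)$, which only requires the residue extension of $M/L$ to be purely inseparable, i.e.\ $M/L$ not unramified, but it shows the perfect-residue intuition does not transfer.

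The paper circumvents all of this with a split/non-split dichotomy that makes twisting unnecessary. If the central extension $1\to C_p\to\Gt\to G\to1$ does not split, then \emph{any} Witt solution $M$ of the embedding problem automatically satisfies $(2)$: if $M/L$ were unramified, the maximal unramified subextension $F/K$ of $M/K$ would be nontrivial, and since the residue extension of $L/K$ is purely inseparable one gets $F\cap L=K$ and $\Gal(M/K)\cong\Gal(F/K)\times\Gal(L/K)\cong C_p\times G$, splitting the extension --- a contradiction. If it does split, then $\Gt\cong C_p\times G$, and one takes $M=FL$ with $F/K$ a totally ramified $C_p$-extension whose break exceeds all breaks of $L/K$ (\cite[$\mathrm{III}$ Proposition (2.5)]{fesvos}); then $\U_{F/K}\cap\U_{L/K}=\emptyset$ and Lemma \ref{composite}, a break-counting argument entirely over $K$, yields $(1)$ and $(2)$. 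Your construction can be repaired along exactly these lines: in the split case choose $\psi_0$ trivial on the kernel, so $\chi_0=0$ and you only need $[a]_L\neq0$ and non-unramifiedness of $L(\theta)/L$, both of which follow from Lemma \ref{composite} applied to $LK(\theta)/K$; in the non-split case drop the twist entirely. As written, however, the ramification control over $L$ at the crux of your argument is unestablished.
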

\begin{proof}
We prove the assertion in a similar way to the proof of \cite[Corollary 3.2]{converse}. 
Let $N=\ker\phi$ and $|N|=p^n$.
If $n\ge2$, there exists a nontrivial normal subgroup $N'$ of $N$ such that $N'$ is contained in the center $Z(\Gt)$ of $\Gt$, since $N\cap Z(\Gt)\neq\{1\}$.
Then $\phi$ is the composition of the canonical surjection $\Gt\to\Gt/N'$ and the homomorphisms $\Gt/N'\to G$ induced by $\phi$, and it suffices to consider these two homomorphism.
Therefore, proceeding by induction on $n$, it suffices to consider the case where $N\cong C_p$.

If the extension $\Gt$ of $G$ by $N$ is split, then we have $\Gt\cong C_p\times G$, since $N\cong C_p$ is contained in $Z(\Gt)$.
By \cite[$\mathrm{III}$ Proposition (2.5)]{fesvos}, there exists a totally ramified $C_p$-extension $F/K$ whose non-log upper ramification break is larger than all the non-log upper ramification breaks of $L/K$.
Then we have $\U_{F/K}\cap\U_{L/K}=\emptyset$, and hence $M=FL$ satisfies $(1)$ and $(2)$ by Lemma \ref{composite}.

If the extension $\Gt$ of $G$ by $N$ is not split, then we claim that $\rank(\Gt)=\rank(G)$.
Since the image of a generating set for $\Gt$ under $\phi$ generates $G$, we have $\rank(\Gt)\ge\rank(G)$.
Let $A$ be a generating set for $G$ such that the cardinality of $A$ is equal to $\rank(G)$.
Let $\At\subset\Gt$ be a subset such that the cardinality of $\At$ is equal to that of $A$ and that $\phi(\At)=A$.
We assume that $\langle\At\rangle\not=\Gt$, and then we have $\ker(\langle\At\rangle\to G)\subsetneq N=C_p$, and hence $\langle\At\rangle\cong G$.
Thus the extension $\Gt$ of $G$ by $N$ is split by $G\to\langle\At\rangle\to\Gt$.
This is a contradiction.
Therefore, we have $\langle\At\rangle=\Gt$ and $\rank(\Gt)=\rank(G)$.

By \cite[$\mathrm{III}$]{witK}, there exists an extension $M/L$ satisfying $(1)$.
We assume that $M/L$ is unramified.
Let $F/K$ be the maximal unramified subextension of $M/K$.
Then $F=M^{I(M/K)}$ is a Galois extension over $K$.
Since $M/L$ is unramified, the residue extension of $M/K$ is not purely inseparable.
Hence we have $[F:K]\ge p$.
Since $F/K$ is unramified and the residue extension of $L/K$ is purely inseparable, we have $F\cap L=K$ and $\Gal(M/K)\cong\Gal(F/K)\times\Gal(L/K)$.
Therefore, we have $\Gal(F/K)\cong C_p$ and $\Gal(M/K)\cong C_p\times G$.
This contradicts the assumption that the extension $\Gt$ of $G$ by $N$ is not split.
Thus the residue extension of $M/L$ is purely inseparable, and hence that of $M/K$ is purely inseparable.
Therefore, $M/K$ satisfies $(2)$.
\end{proof}

\begin{cor}[{cf.\ \cite[Corollary 3.3]{converse}}]\label{G-ext}
Let $K$ be a complete discrete valuation field of characteristic $p>0$ and let $\Gt$ be a finite $p$-group.
Then there exists a $\Gt$-extension $L/K$ whose Galois group $\Gal(L/K)$ is equal to the (wild) inertia group of $L/K$.
\end{cor}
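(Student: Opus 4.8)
The plan is to obtain this as a direct specialization of the embedding lemma, Lemma~\ref{embed}, taking the trivial extension as the base of the construction. I would apply Lemma~\ref{embed} with $L=K$: here $G=\Gal(K/K)$ is the trivial group, which is a $p$-group (of order $p^0$) and vacuously satisfies $G=I(K/K)=P(K/K)$. Let $\phi\colon\Gt\to G$ be the unique homomorphism onto the trivial group; it is surjective with $\ker\phi=\Gt$.

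Lemma~\ref{embed} then yields an extension $M/K$ which is Galois, sits in an isomorphism of short exact sequences whose bottom row is $1\to\Gt\to\Gt\to 1\to 1$, and satisfies $\Gal(M/K)=I(M/K)=P(M/K)$. Reading off the isomorphism of exact sequences gives $\Gal(M/K)\cong\ker\phi=\Gt$, so $M/K$ is a $\Gt$-extension whose Galois group equals its (wild) inertia group. Setting $L=M$ finishes the proof.

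Since the statement is a one-step instance of Lemma~\ref{embed}, I do not expect any genuine obstacle: the inductive content — peeling off a central $C_p$ at each stage and realizing the split case via \cite[$\mathrm{III}$ Proposition (2.5)]{fesvos} and the non-split case via Witt theory \cite[$\mathrm{III}$]{witK}, all while keeping the Galois group equal to the wild inertia group — is already carried out inside the proof of Lemma~\ref{embed}. One could equivalently argue by induction on $|\Gt|$, choosing a central subgroup $N\cong C_p$, realizing $\Gt/N$ by the inductive hypothesis, and lifting along the quotient map $\Gt\to\Gt/N$ via Lemma~\ref{embed}; this coincides with the direct argument because Lemma~\ref{embed} internalizes exactly this induction. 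The only point to verify is the harmless convention that the trivial group counts as a $p$-group, so that the hypotheses of Lemma~\ref{embed} are met by $K/K$.
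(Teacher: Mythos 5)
Your proposal is correct and is exactly the paper's proof, which reads simply ``Take $G=\{1\}$ in Lemma \ref{embed}''; your write-up just spells out the specialization (identifying $\Gal(M/K)\cong\ker\phi=\Gt$ via the isomorphism of exact sequences) that the paper leaves implicit. No gap here, since the trivial group does satisfy the hypotheses of Lemma \ref{embed} and the inductive work is, as you note, already internal to that lemma.
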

\begin{proof}
Take $G=\{1\}$ in Lemma \ref{embed}.
\end{proof}

We put a partial order on finite $p$-groups by $H\preccurlyeq G$ if $H$ is isomorphic to a quotient of
$G$.  
We say that a finite non-abelian $p$-group $G$ is a {\it minimal non-abelian $p$-group} if $G$ is minimal with respect to $\preccurlyeq$ among finite non-abelian $p$-groups (\cite[Section 4]{converse}).

Suppose $p>2$.
For $n,d\ge1$, finite non-abelian $p$-groups $H(n,d)$ and $A(n,d)$ of order $p^{2n+d}$ are defined in \cite[Section 4]{converse}, and these groups are minimal non-abelian $p$-groups.
Conversely, every minimal non-abelian $p$-group is isomorphic to one of these groups by \cite[Proposition 4.2]{converse}.
For example, $H(1,1)$ is generated by three elements $x$, $y$, and $z$ such that $|x|=|y|=p$, $|z|=p^d$, $xz=zx$, $yz=zy$, and that $[x,y]=z$.
Thus $H(1,1)$ is the Heisenberg $p$-group.

\begin{lemma}[{cf.\ \cite[Proposition 5.2]{converse}}]\label{H(1,1)}
Let $K$ be a complete discrete valuation field of characteristic $p>2$ and let $F/K$ be a totally ramified $C_p$-extension.
Let $b$ be the upper, or equivalently lower, ramification break of $F/K$, and let $a$ be an integer such that $a>b$ and $a\not\equiv0,-b\pmod{p}$.  
Then there exists an extension $L/F$ such that $L/K$ is an $H(1,1)$-extension with the non-log upper ramification breaks $b+1$, $a+1$, and $a+p^{-1}b+1$, and that the Galois group $\Gal(L/K)$ of $L/K$ is equal to the (wild) inertia group $I(L/K)=P(L/K)$ of $L/K$.
Moreover, we have $p\nmid b$ and $L/K$ has a non-integral non-log upper ramification break.
\end{lemma}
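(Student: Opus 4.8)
The plan is to construct $L$ by an explicit Artin--Schreier tower and then read off the three non-log breaks by reducing to the perfect-residue-field situation, following the pattern of \cite[Proposition 5.2]{converse}. Since $\ch K=p$, I may write $K\cong\Kbar((t))$. The given extension is $F=K(\theta_1)$ with $\wp(\theta_1)=\alpha$ for some $\alpha\in K$ of valuation $v_K(\alpha)=-b$; here $p\nmid b$ holds automatically, because the unique break of a $C_p$-extension in characteristic $p$ is prime to $p$. I would then set $\beta=t^{-a}$ (so $v_K(\beta)=-a$ with leading coefficient $1$), take $\theta_2$ with $\wp(\theta_2)=\beta$, and finally adjoin $\theta_3$ with $\wp(\theta_3)=\theta_1\beta$, putting $L=K(\theta_1,\theta_2,\theta_3)$. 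Defining $\sigma,\tau$ on $L$ by $\sigma(\theta_1)=\theta_1+1$, $\sigma(\theta_2)=\theta_2$, $\sigma(\theta_3)=\theta_3+\theta_2$ and $\tau(\theta_1)=\theta_1$, $\tau(\theta_2)=\theta_2+1$, $\tau(\theta_3)=\theta_3$, a direct check gives $[\sigma,\tau](\theta_3)=\theta_3+1$ while fixing $\theta_1,\theta_2$, so the commutator generates a central $C_p$ and $\Gal(L/K)\cong H(1,1)$.

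Next I would establish that $L/K$ is totally wildly ramified, i.e.\ $\Gal(L/K)=I(L/K)=P(L/K)$; this is exactly the hypothesis needed to invoke Lemma \ref{reduce2}. The fields $F=K(\theta_1)$ and $F'=K(\theta_2)$ are totally ramified $C_p$-extensions with distinct breaks $b\neq a$ (using $p\nmid a$, which follows from $a\not\equiv0\pmod p$); since $\U_{F/K}\cap\U_{F'/K}=\emptyset$, Lemma \ref{composite} shows $E=K(\theta_1,\theta_2)=FF'$ is a totally ramified $(C_p\times C_p)$-extension with $\Gal(E/K)=I(E/K)=P(E/K)$. For the top step $L=E(\theta_3)$ I must analyze $\wp(\theta_3)=\theta_1\beta$, whose pole order over $E$ is $v_E(\theta_1\beta)=-(pb+p^2a)=-p(b+pa)$, which is divisible by $p$. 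The crucial point is that the leading coefficients arising when one reduces this equation modulo $\wp(E)$ lie in $\Kbar$ and are in fact $p$-th powers there: each such coefficient comes from a totally ramified subextension whose residue field is still $\Kbar$, which forces it to be a $p$-th power even when $\Kbar$ is imperfect. Hence the reduction can be performed over $K$ directly and terminates at a reduced pole order prime to $p$; the hypotheses $a>b$ and $a\not\equiv-b\pmod p$ are precisely what guarantee that the intermediate terms (notably the $\theta_1\theta_2$ contribution of pole order $p(a+b)$) reduce strictly below the dominant one and that the final reduced pole order equals $pa+b$. Thus $L/E$ is a totally ramified $C_p$-extension and $L/K$ is totally wildly ramified.

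With total ramification in hand I would compute the breaks by base change. By Lemma \ref{tan-dom} there is a tangentially dominant morphism $\OO_K\to\OO_{K'}$ with $\overline{K'}$ perfect, and by Lemma \ref{reduce2} the extension $L'=LK'$ is again an $H(1,1)$-extension, totally ramified over $K'$, with $u\ge1$ a non-log upper break of $L/K$ if and only if $u-1$ is a classical upper break of $L'/K'$. Over $K'$ the residue field is perfect and the same Artin--Schreier data defines $L'/K'$, so I can compute the classical upper breaks exactly as in \cite[Proposition 5.2]{converse}: the abelianization $\Gal(L'/K')/\langle z\rangle\cong C_p\times C_p$ contributes the breaks $b$ and $a$, while the central $C_p=\langle z\rangle=\Gal(L'/E')$ (with $E'=EK'$) has lower break equal to the reduced pole order $pa+b$; feeding the lower breaks $b$, $pa-(p-1)b$, $pa+b$ through the Herbrand function of $L'/K'$ yields the classical upper breaks $b$, $a$, $a+p^{-1}b$. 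Transporting back through Lemma \ref{reduce2}, the non-log upper breaks of $L/K$ are $b+1$, $a+1$, $a+p^{-1}b+1$.

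Finally, since $p\nmid b$ we have $a+p^{-1}b\notin\Z$, so $a+p^{-1}b+1$ is a non-integral non-log upper ramification break, completing the argument. The main obstacle is the central break computation of the second and third paragraphs: carrying out the Artin--Schreier reduction of $\wp(\theta_3)=\theta_1\beta$ to show that the reduced pole order is exactly $pa+b$ (so that $L/E$ is totally ramified and the resulting upper break is the non-integral $a+p^{-1}b$), and checking that this reduction is legitimate over a possibly imperfect residue field. The congruence hypotheses $a\not\equiv0,-b\pmod p$ and the inequality $a>b$ enter precisely here, to keep the relevant pole orders prime to $p$ and correctly ordered.
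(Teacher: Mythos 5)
Your construction is not the one in the paper, and unfortunately it does not have the breaks you claim: the third upper break of your extension is an integer, so the lemma's conclusion fails for it. The paper, following \cite[Proposition 5.2]{converse}, does not take an arbitrary $\beta$ of valuation $-a$: it writes $a=bt+ps$ with $1\le t\le p-2$ (this is where $a\not\equiv0,-b\pmod p$ really enters), sets $\alpha=\pi_K^{-ps}\beta^t$, and — crucially — adjoins $z$ with $z^p-z=\alpha y+\gamma\alpha\beta$, $\gamma=(t+1)^{-1}$. The correction term $\gamma\alpha\beta$ is engineered so that in the reduction over $F$ the terms $\alpha y$ and $\gamma(t+1)\pi_K^{-ps}\beta^t y$ cancel, leaving reduced pole order $pa-(p-2)b$ for the extension $F(z)/F$; that cancellation is exactly what pushes the center's upper break down to the non-integral $a+p^{-1}b$. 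Your equation $\wp(\theta_3)=\theta_1\beta$ has no such cancellation: $v_F(\theta_1\beta)=-(pa+b)$ is already prime to $p$, so $F(\theta_3)/F$ has break $pa+b$, while $E/F$ has break $pa-(p-1)b$. Running Herbrand for $L/F$ (upper breaks $pa-(p-1)b$ and $pa+b$, differing by $pb$) puts the center $\Gal(L/E)$ at lower break $pa-(p-1)b+p^2b$ in $v_L$, which translates into the upper break $a+b$ for $L/K$ — an integer. So your $L/K$ has non-log breaks $b+1$, $a+1$, $a+b+1$, all integral. The lower breaks $b$, $pa-(p-1)b$, $pa+b$ you quote are those of the Elder--Keating extension, not yours; the coincidence that their last lower break $pa+b$ equals the $F$-pole order of your element $\theta_1\beta$ appears to be the source of the slip. (A sanity check with $p=3$, $b=1$, $a=4$ confirms: your center has lower break $19$ and upper break $5=a+b$, versus $13=pa+b$ and $13/3=a+p^{-1}b$ for the paper's construction.)

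There is a second, independent gap: your justification for the Artin--Schreier reduction over an imperfect residue field — that leading coefficients ``come from a totally ramified subextension whose residue field is still $\Kbar$, which forces them to be $p$-th powers'' — is a non-argument, since residues of elements of a totally ramified subextension fill out all of $\Kbar$, not $\Kbar^p$. The paper performs no reduction over $K$ at all: it proves $L/K$ is Galois by exhibiting $L$ as the splitting field of $f(X)(X^p-X-\alpha)(X^p-X-\beta)$ with $f(X)=\prod_{n=0}^{p-1}(X^p-X+n\alpha)-N_{F/K}(\alpha y+\gamma\alpha\beta)$, observes that the residue extension is purely inseparable to get $\Gal(L/K)=I(L/K)$ (the hypothesis needed for Lemma \ref{reduce2}), and only then base-changes along a tangentially dominant morphism (Lemma \ref{tan-dom}) and quotes the perfect-residue-field computation of \cite[Proposition 5.2]{converse} verbatim. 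Your third paragraph correctly anticipates this base-change mechanism, and for your construction one could even obtain $\Gal(L/K)=I(L/K)$ honestly via Lemma \ref{composite} applied over $F$ (the breaks $pa-(p-1)b$ and $pa+b$ of $E/F$ and $F(\theta_3)/F$ are distinct), with no appeal to the false $p$-th power principle; but none of this rescues the proposal, because the break defect is already present over a perfect residue field.
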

\begin{proof}
We construct a desired extension $L/F$ in a similar way to the proof of \cite[Proposition 5.2]{converse} further using Lemma \ref{tan-dom}.
By \cite[$\mathrm{III}$ Proposition (2.3)]{fesvos}, we have $p\nmid b$.
By \cite[$\mathrm{III}$ Proposition (2.4)]{fesvos}, there exists $y\in F$ such that $F=K(y)$, $v_F(y)=-b$, and that $\beta=y^p-y\in K$.
Since $p\nmid b$, we can write $a=bt+ps$ with $t,s\in\Z$.
We can assume $0\le t<p$, and then we have $1\le t\le p-2$ since $a\not\equiv0,-b\pmod{p}$.
Put $\alpha=\pi_K^{-ps}\beta^t$ for a uniformizer $\pi_K$ of $K$.
Then we have $v_K(\alpha)=-a$.
Let $x\in K^{sep}$ satisfy $x^p-x=\alpha$ and put $M=F(x)=K(x,y)$.
By \cite[$\mathrm{III}$ Proposition (2.5)]{fesvos}, $K(x)/K$ is a totally ramified $C_p$-extension with the upper ramification break $a$.
By Lemma \ref{composite}, the Galois group $\Gal(M/K)$ of $M/K$ is equal to the (wild) inertia group of $M/K$, since $\U_{K(x)/K}\cap\U_{F/K}=\emptyset$.
Let $\gamma$ be the inverse of $t+1$ in $\F_p^{\times}\subset K^{\times}$, let $z\in K^{sep}$ satisfy $z^p-z=\alpha y+\gamma\alpha\beta$, and put $L=M(z)=K(x,y,z)$.
By \cite[$\mathrm{III}$ Proposition (2.5)]{fesvos}, $L/M$ is totally ramified, since $v_M(\alpha y+\gamma\alpha\beta)<0$.

We prove that $L/K$ is a Galois extension.
Let $\sigma\in\Gal(F/K)$ satisfy $\sigma(y)=y+1$.
Then for $1\le n<p$, we have \[\sigma^n(\alpha y+\gamma\alpha\beta)=\alpha y+\gamma\alpha\beta+n\alpha=z^p-z+n\alpha.\] 
Hence we get \[N_{F/K}(\alpha y+\gamma\alpha\beta)=\prod_{n=0}^{p-1}(z^p-z+n\alpha),\] where $N_{F/K}\colon F\to K$ is the norm map.
Put \[f(X)=\prod_{n=0}^{p-1}(X^p-X+n\alpha)-N_{F/K}(\alpha y+\gamma\alpha\beta)\in K[X].\]
Then for $0\le n, m<p$, $z+nx+m$ is a root of $f$, since $n\alpha=(nx)^p-nx$.
Since $L$ is the smallest splitting field of a separable polynomial $f(X)(X^p-X-\alpha)(X^p-X-\beta)$ over $K$, $L/K$ is a Galois extension.
Moreover, since the residue field $\Lbar=\Mbar$ is purely inseparable over $\Kbar$, the Galois group $\Gal(L/K)$ of $L/K$ is equal to the (wild) inertia group of $L/K$.
By Lemma \ref{tan-dom}, there exists a complete discrete valuation field $K'$ with the perfect residue field of residue characteristic $p$ and there exists a tangentially dominant morphism $\OO_K\to \OO_{K'}$ of discrete valuation rings. 
Put $F'=FK'=K'(y)$, $M'=MK'=K'(x,y)=F'(x)$, and $L'=LK'=K'(x,y,z)=M'(z)$.
Then the proof of \cite[Proposition 5.2]{converse} shows that $L'/K'$ is a totally ramified $H(1,1)$-extension with the upper ramification breaks $b$, $a$ and $a+p^{-1}b$.
By Lemma \ref{reduce2}, $L/K$ is an $H(1,1)$-extension with the non-log upper ramification breaks $b+1$, $a+1$, and $a+p^{-1}b+1$
\end{proof}

Put $H(0,d)=C_p^d$.
By \cite[Proposition 4.4]{converse}, for $n,d\ge1$, $H(n,d)$ is isomorphic to a quotient group of $H(n-1,d)\times H(1,1)$.
Further, for $n\ge2$ and $d\ge1$, $A(n,d)$ is isomorphic to a quotient group of $A(n-1,d)\times H(1,1)$.
By \cite[Proposition 4.5]{converse}, for $d\ge1$, $A(1,d)$ can be written as a quotient group of $G_d$, where $G_d$ is the subgroup of $H(1,1)\times C_{p^{d+1}}=\langle x,y,z\rangle\times\langle w\rangle$ generated by $xw$, $y$, and $z$.

\begin{lemma}[{cf.\ \cite[Proposition 5.4]{converse}}]\label{minimal}
Let $K$ be a complete discrete valuation field of characteristic $p>2$ and let $n,d\ge1$.
\begin{enumerate}[$(1)$]
\item There is an $H(n,d)$-extension $L/K$ such that the Galois group $\Gal(L/K)$ of $L/K$ is equal to the (wild) inertia group $I(L/K)=P(L/K)$ of $L/K$, and that the largest non-log upper ramification break of $L/K$ is not an integer.
\item There is an $A(n,d)$-extension $L/K$ such that the Galois group $\Gal(L/K)$ of $L/K$ is equal to the (wild)  inertia group $I(L/K)=P(L/K)$ of $L/K$, and that the largest non-log upper ramification break of $L/K$ is not an integer.
\end{enumerate}
\end{lemma}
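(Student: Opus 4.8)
The plan is to prove both parts together by induction on $n$, following \cite[Proposition 5.4]{converse} but with the classical upper filtration replaced by the non-log filtration; since Lemma \ref{H(1,1)}, Lemma \ref{composite}, Lemma \ref{embed} and Corollary \ref{G-ext} all hold for an arbitrary, possibly imperfect, residue field, no separate reduction to the perfect residue case is needed at this stage (it is already absorbed into the proof of Lemma \ref{H(1,1)}). The engine of the induction is the following observation. Suppose $L_1/K$ is a Galois extension of $p$-power degree with $\Gal(L_1/K)=I(L_1/K)=P(L_1/K)$. Applying Lemma \ref{H(1,1)} with a totally ramified $C_p$-extension $F/K$ of sufficiently large break $b$ (such extensions exist in characteristic $p$ by \cite[$\mathrm{III}$ Proposition (2.5)]{fesvos}) and a sufficiently large integer $a$, I obtain an $H(1,1)$-extension $L_2/K$, with the same property, whose three non-log breaks $b+1,\,a+1,\,a+p^{-1}b+1$ all exceed every element of $\U_{L_1/K}$. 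Then $\U_{L_1/K}\cap\U_{L_2/K}=\emptyset$, so by Lemma \ref{composite} the compositum $L_1L_2/K$ is Galois with $\Gal(L_1L_2/K)\cong\Gal(L_1/K)\times H(1,1)$, again equal to its wild inertia group, and its largest non-log break is the non-integral number $r_0=a+p^{-1}b+1$.

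For part $(1)$ I would induct on $n$, the base $n=0$ being the extension $H(0,d)=C_p^d$ produced by Corollary \ref{G-ext}. For the step, given an $H(n-1,d)$-extension $L_1/K$ with the required properties, the engine above yields an $(H(n-1,d)\times H(1,1))$-extension $L_1L_2/K$ with $\Gal=I=P$ and unique largest, non-integral, break $r_0$. Since $H(n,d)$ is a quotient of $H(n-1,d)\times H(1,1)$, I fix such a surjection $\phi$, put $N=\ker\phi$, and take $L=(L_1L_2)^{N}$. Then $L/K$ is an $H(n,d)$-extension, and $\Gal(L/K)=I(L/K)$ passes to this quotient (the residue extension stays purely inseparable); as $H(n,d)$ is a $p$-group this forces $I(L/K)=P(L/K)$. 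It remains to see that $r_0$ is still a break of $L/K$. By compatibility of the non-log filtration with quotients (\cite[Remark 3.5]{abbes-saito}) one has $\Gal(L/K)^{r}_{\text{n-log}}=\phi\!\left(\Gal(L_1L_2/K)^{r}_{\text{n-log}}\right)$, and since $r_0$ is the top break of $L_1L_2/K$ we have $\Gal(L_1L_2/K)^{r_0+\epsilon}_{\text{n-log}}=1$. Applying the same compatibility to the two projections onto $\Gal(L_1/K)$ and onto $\Gal(L_2/K)=H(1,1)$ shows that, because $r_0$ exceeds all breaks of $L_1$ and is the top break of $L_2$, the group $\Gal(L_1L_2/K)^{r_0}_{\text{n-log}}$ equals $\{1\}\times\langle z\rangle=\{1\}\times[H(1,1),H(1,1)]\cong C_p$. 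Hence $r_0$ survives to $L/K$ precisely when $(1,z)\notin N$, and then, since $\U_{L/K}\subseteq\U_{L_1L_2/K}$ and $r_0=\max\U_{L_1L_2/K}$, it is the largest break of $L/K$.

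For part $(2)$ the inductive step for $n\ge2$ is identical, using that $A(n,d)$ is a quotient of $A(n-1,d)\times H(1,1)$. The base case $A(1,d)$ is the delicate point: here $A(1,d)$ is only a quotient of $G_d\le H(1,1)\times C_{p^{d+1}}$, and $G_d$ is not itself a quotient of a direct product of previously constructed extensions, so Lemma \ref{composite} does not apply directly. I would instead produce a $G_d$-extension with $\Gal=I=P$ and non-integral largest non-log break by the explicit Artin--Schreier computation of \cite[Proposition 5.4]{converse}, performed over a complete discrete valuation field with perfect residue field and then transported to the given $K$ through Lemma \ref{tan-dom} and Lemma \ref{reduce2}, exactly as in the proof of Lemma \ref{H(1,1)}; passing to the quotient $A(1,d)$ and tracking the top break as in part $(1)$ then finishes the argument.

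The main obstacle is the group-theoretic input underlying the survival of the top break: for the structural surjections furnished by \cite[Propositions 4.4 and 4.5]{converse} I must check that $\phi$ does not kill the central commutator $\langle z\rangle$ of the $H(1,1)$-factor carrying $r_0$, i.e.\ that $\phi$ maps $\langle z\rangle$ isomorphically onto $[H(n,d),H(n,d)]$ (respectively $[A(n,d),A(n,d)]$). This is exactly the condition $(1,z)\notin N$ isolated above, and it is what prevents the unique non-integral break from being absorbed in the descent. A secondary difficulty is the break control in the explicit $G_d$-construction for $A(1,d)$, the one place where Lemma \ref{composite} cannot be invoked and the ramification must be computed by hand.
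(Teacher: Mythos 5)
Your inductive step is sound and takes a genuinely different route from the paper. The paper never argues break survival over $K$ itself: it base-changes the compositum $N=N_1N_2$ along the tangentially dominant morphism $\OO_K\to\OO_{K'}$ of Lemma \ref{tan-dom}, cites the computation in the proof of \cite[Proposition 5.4]{converse} over the perfect-residue field $K'$ to conclude that the fixed field $L'=N'^B$ has largest (classical) upper break $a+p^{-1}b$, and then pulls this back through Lemma \ref{reduce2}. You instead stay over $K$ and reduce survival of $r_0=a+p^{-1}b+1$ to the condition $(1,z)\notin\ker\phi$, correctly using quotient-compatibility of the non-log filtration together with the fact that the top-break group of the $H(1,1)$-factor is $\langle z\rangle$ (three simple breaks, so the top group has order $p$ and the quotient of order $p^2$ is abelian, forcing the top group to be the commutator). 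The condition you flag as ``the main obstacle'' is in fact automatic from minimality: the kernel $B$ has order $p$ by counting orders, so if it contained $(1,z)$ it would equal $\langle(1,z)\rangle$ and yield $H(n,d)\cong H(n-1,d)\times C_p\times C_p$, which is impossible (for $n\ge2$ the right-hand side has the non-abelian proper quotient $H(n-1,d)$, contradicting minimality of $H(n,d)$; for $n=1$ it is abelian); the same argument handles $A(n,d)$ and $G_d/B\cong A(1,d)$, where a kernel of order $p$ containing the commutator $\langle z\rangle$ would make $A(1,d)$ abelian. Note also that the paper does not induct: Corollary \ref{G-ext} supplies the $H(n-1,d)$- or $A(n-1,d)$-factor in one stroke with no break control, so the non-integrality clause in your inductive hypothesis is never used --- harmless, but unnecessary.

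The genuine gap is your base case for $A(1,d)$, on two counts. First, your claim that Lemma \ref{composite} ``does not apply directly'' is mistaken: its common-subfield case $m\ge1$ was stated precisely for this situation, and it is exactly how the paper proceeds. One applies Lemma \ref{embed} to the surjection $C_{p^{d+1}}\to C_p=\Gal(F/K)$ to obtain a $C_{p^{d+1}}$-extension $N_1/K$ containing $F$ with $\Gal=I=P$, takes $N_2/F$ from Lemma \ref{H(1,1)} with $a$ larger than the top break of $N_1$; then $\U_{N_1/K}\cap\U_{N_2/K}=\{b+1\}$ and $F\subseteq N_1\cap N_2$, so Lemma \ref{composite} with $m=1$ makes $N=N_1N_2$ Galois of degree $p^{d+3}$ with $\Gal=I=P$, and its Galois group --- the fiber product of $C_{p^{d+1}}$ and $H(1,1)$ over $C_p$ --- is $G_d$ (the paper cites the proof of \cite[Proposition 5.4]{converse} for this identification after base change). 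Second, your fallback --- performing the explicit Artin--Schreier construction over a field with perfect residue field and ``transporting to the given $K$ through Lemma \ref{tan-dom} and Lemma \ref{reduce2}'' --- runs in a direction those lemmas do not support: the tangentially dominant morphism goes $\OO_K\to\OO_{K'}$, so Lemma \ref{reduce2} only pushes extensions of $K$ up to $K'$, and no descent from perfect to imperfect residue fields is available in the paper's toolkit (Lemma \ref{category} requires isomorphic residue fields, which fails here). In Lemma \ref{H(1,1)} the extension is built over $K$ itself and only its breaks are computed after base change; so as written your base case does not go through, though it is repaired at once by the Lemma \ref{composite} route above.
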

\begin{proof}
We construct a desired extension $L/K$ in a similar way to \cite[Proposition 5.4]{converse}.

$(1)$ By Corollary \ref{G-ext}, there exists an $H(n-1,d)$-extension $N_1/K$ whose Galois group $\Gal(N_1/K)$ is equal to the (wild) inertia group of $N_1/K$.
Let $v+1$ be the largest non-log upper ramification break of $N_1/K$ and let $a, b$ be integers such that $a>b>v$, $p\nmid b$, and that $a\not\equiv0,-b\pmod{p}$.
By \cite[$\mathrm{III}$ Proposition (2.5)]{fesvos}, there exists a totally ramified $C_p$-extension $F/K$ with the upper ramification break $b$.
By Lemma \ref{H(1,1)}, there exists an extension $N_2/F$ such that $N_2/K$ is an $H(1,1)$-extension with the non-log upper ramification breaks $b+1$, $a+1$, and $a+p^{-1}b+1$, and the Galois group $\Gal(N_2/K)$ of $N_2/K$ is equal to the (wild) inertia group of $N_2/K$.
Put $N=N_1N_2$.
Then $N/K$ is a Galois extension whose Galois group $\Gal(N/K)$ is isomorphic to $\Gal(N_1/K)\times\Gal(N_2/K)\cong H(n-1,d)\times H(1,1)$ and is equal to the (wild) inertia group $I(N/K)=P(N/K)$ of $N/K$ by Lemma \ref{composite}.

By Lemma \ref{tan-dom}, there exists a complete discrete valuation field $K'$ with the perfect residue field of residue characteristic $p$ and there exists a tangentially dominant morphism $\OO_K\to \OO_{K'}$ of discrete valuation rings. 
Put $N_i'=N_iK'$ for $i=1,2$ and $N'=NK'$.
By Lemma \ref{reduce2}, $N_1'/K'$ is a totally ramified $H(n-1,d)$-extension such that $v$ is the largest upper ramification break of $N_1'/K'$, $N_2'/K'$ is a totally ramified $H(1,1)$-extension with the upper ramification breaks $b$, $a$, and $a+p^{-1}b$, and that $N'/K'$ is a totally ramified Galois extension whose Galois group $\Gal(N'/K')$ is isomorphic to $\Gal(N/K)$.
Let $B\subset H(n-1,d)\times H(1,1)$ be a normal subgroup such that $(H(n-1,d)\times H(1,1))/B\cong H(n,d)$.
The proof of \cite[Proposition 5.4 (a)]{converse} shows that the fixed field $L'=N'^B$ of $B$ is a totally ramified $H(n,d)$-extension of $K'$ whose largest upper ramification break is $a+p^{-1}b$, which is not an integer.
Let $L=N^B$ be the fixed field of $B$.
Then $L/K$ is an $H(n,d)$-extension, and we have $L'=LK'$ and $\Gal(L/K)=\Gal(N/K)/B=I(N/K)/B=I(L/K)$.
By Lemma \ref{reduce2}, the non-log upper ramification breaks of $L/K$ are $u+1$ for the upper ramification breaks $u$ of $L'/K'$.
Hence $L/K$ satisfies $(1)$.

$(2)$ If $n\ge2$, we can prove the assertion in a similar way to the case $(1)$.
By Corollary \ref{G-ext}, there exists an $A(n-1,d)$-extension $N_1/K$ whose Galois group $\Gal(N_1/K)$ is equal to the (wild) inertia group of $N_1/K$.
Let $v+1$ be the largest non-log upper ramification break of $N_1/K$ and let $a, b$ be integers such that $a>b>v$, $p\nmid b$, and that $a\not\equiv0,-b\pmod{p}$.
By \cite[$\mathrm{III}$ Proposition (2.5)]{fesvos}, there exists a totally ramified $C_p$-extension $F/K$ with the upper ramification break $b$.
By Lemma \ref{H(1,1)}, there exists an extension $N_2/F$ such that $N_2/K$ is an $H(1,1)$-extension with the non-log upper ramification breaks $b+1$, $a+1$, and $a+p^{-1}b+1$, and the Galois group $\Gal(N_2/K)$ of $N_2/K$ is equal to the (wild) inertia group of $N_2/K$.
Put $N=N_1N_2$.
Then $N/K$ is a Galois extension whose Galois group $\Gal(N/K)$ is isomorphic to $\Gal(N_1/K)\times\Gal(N_2/K)\cong A(n-1,d)\times H(1,1)$ and is equal to the (wild) inertia group $I(N/K)=P(N/K)$ of $N/K$ by Lemma \ref{composite}.

By Lemma \ref{tan-dom}, there exists a complete discrete valuation field $K'$ with the perfect residue field of residue characteristic $p$ and there exists a tangentially dominant morphism $\OO_K\to \OO_{K'}$ of discrete valuation rings. 
Put $N_i'=N_iK'$ for $i=1,2$ and $N'=NK'$.
By Lemma \ref{reduce2}, $N_1'/K'$ is a totally ramified $A(n-1,d)$-extension such that $v$ is the largest upper ramification break of $N_1'/K'$, $N_2'/K'$ is a totally ramified $H(1,1)$-extension with the upper ramification breaks $b$, $a$, and $a+p^{-1}b$, and that $N'/K'$ is a totally ramified Galois extension whose Galois group $\Gal(N'/K')$ is isomorphic to $\Gal(N/K)$.
Let $B\subset A(n-1,d)\times H(1,1)$ be a normal subgroup such that $(A(n-1,d)\times H(1,1))/B\cong A(n,d)$.
The proof of \cite[Proposition 5.4 (b)]{converse} shows that the fixed field $L'=N'^B$ of $B$ is a totally ramified $A(n,d)$-extension of $K'$ whose largest upper ramification break is $a+p^{-1}b$, which is not an integer.
Let $L=N^B$ be the fixed field of $B$.
Then $L/K$ is an $A(n,d)$-extension, and we have $L'=LK'$ and $\Gal(L/K)=\Gal(N/K)/B=I(N/K)/B=I(L/K)$.
By Lemma \ref{reduce2}, the non-log upper ramification breaks of $L/K$ are $u+1$ for the upper ramification breaks $u$ of $L'/K'$.
Hence $L/K$ satisfies $(2)$.

We consider the case $n=1$.
By \cite[$\mathrm{III}$ Proposition (2.5)]{fesvos}, there exists a totally ramified $C_p$-extension $F/K$ whose upper ramification break $b$ is not divisible by $p$.
By Lemma \ref{embed}, there exists an extension $N_1/F$ such that $N_1/K$ is a $C_{p^{d+1}}$-extension whose Galois group $\Gal(N_1/K)$ is equal to the (wild) inertia group of $N_1/K$.
Let $v+1$ be the largest non-log upper ramification break of $N_1/K$ and let $a$ be an integer such that  $a>v$ and $a\not\equiv0,-b\pmod{p}$.
By Lemma \ref{H(1,1)}, there exists an extension $N_2/F$ such that $N_2/K$ is a $H(1,1)$-extension with the non-log upper ramification breaks $b+1$, $a+1$, and $a+p^{-1}b+1$, and the Galois group $\Gal(N_2/K)$ of $N_2/K$ is equal to the (wild) inertia group of $N_2/K$.
Put $N=N_1N_2$.
Since $\U_{N_1/K}\cap\U_{N_2/K}=\{b+1\}$ and $F\subseteq N_1\cap N_2$, $N/K$ is a Galois extension whose Galois group $\Gal(N/K)$ is equal to the (wild) inertia group $I(N/K)=P(N/K)$ of $N/K$ by Lemma \ref{composite}.

By Lemma \ref{tan-dom}, there exists a complete discrete valuation field $K'$ with the perfect residue field of residue characteristic $p$ and there exists a tangentially dominant morphism $\OO_K\to \OO_{K'}$ of discrete valuation rings. 
Put $N_i'=N_iK'$ for $i=1,2$ and $N'=NK'$.
By Lemma \ref{reduce2}, $N_1'/K'$ is a totally ramified $C_{p^{d+1}}$-extension such that $v$ is the largest upper ramification break of $N_1'/K'$, and $N_2'/K'$ is a totally ramified $H(1,1)$-extension with the upper ramification breaks $b$, $a$, and $a+p^{-1}b$, and $N'/K'$ is a totally ramified Galois extension whose Galois group $\Gal(N'/K')$ is isomorphic to $\Gal(N/K)$.
Let $B\subset G_d$ be a normal subgroup such that $G_d/B\cong A(1,d)$
The proof of \cite[Proposition 5.4 (b)]{converse} shows that the Galois group $\Gal(N'/K')$ is isomorphic to the group $G_d$, and that the fixed field $L'=N'^B$ of $B$ is a totally ramified $A(1,d)$-extension of $K'$ whose largest upper ramification break of is $a+p^{-1}b$, which is not an integer.
Let $L=N^B$ be the fixed field of $B$.
Then $L/K$ is an $A(1,d)$-extension, and we have $L'=LK'$ and $\Gal(L/K)=\Gal(N/K)/B=I(N/K)/B=I(L/K)$.
By Lemma \ref{reduce2}, the non-log upper ramification breaks of $L/K$ are $u+1$ for the upper ramification breaks $u$ of $L'/K'$.
Hence $L/K$ satisfies $(2)$.
\end{proof}

\begin{theorem}[{cf.\ \cite[Theorem 5.5]{converse}}]\label{p-group}
Let $K$ be a complete discrete valuation field of characteristic $p>2$ and let $G$ be a finite non-abelian $p$-group. 
Then there is a $G$-extension $L/K$ such that the Galois group $G$ is equal to the (wild) inertia group $I(L/K)=P(L/K)$ of $L/K$, and that $L/K$ has a non-integral non-log upper ramification break.
\end{theorem}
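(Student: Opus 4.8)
The plan is to reduce the construction for an arbitrary non-abelian finite $p$-group $G$ to the minimal non-abelian case already settled in Lemma \ref{minimal}, and then to propagate the resulting extension upward by the embedding construction of Lemma \ref{embed}.

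First I would observe that $G$ surjects onto a minimal non-abelian $p$-group. The set of non-abelian quotients of $G$ is finite and nonempty, since it contains $G$ itself, so it has an element $Q$ that is minimal with respect to the order $\preccurlyeq$. If some non-abelian $p$-group were strictly below $Q$, it would be a quotient of $Q$, hence a non-abelian quotient of $G$ strictly below $Q$, contradicting the minimality of $Q$; therefore $Q$ is minimal among \emph{all} non-abelian $p$-groups, i.e.\ a minimal non-abelian $p$-group. Since $p>2$, by \cite[Proposition 4.2]{converse} we have $Q\cong H(n,d)$ or $Q\cong A(n,d)$ for some $n,d\ge1$. Let $\phi\colon G\to Q$ denote the quotient map.

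Next I would invoke Lemma \ref{minimal} to produce a $Q$-extension $M/K$ such that $\Gal(M/K)=I(M/K)=P(M/K)$ and whose largest non-log upper ramification break $u$ is not an integer. Applying Lemma \ref{embed} to this $M/K$ together with the surjection $\phi$, taking the group $\Gt$ of that lemma to be $G$, I obtain an extension $L/M$ for which $L/K$ is Galois with $\Gal(L/K)\cong G$ via the displayed isomorphism of exact sequences, and for which $\Gal(L/K)=I(L/K)=P(L/K)$ by property $(2)$ of the lemma. Since $\Gal(L/M)\cong\ker\phi$, the field $M=L^{\ker\phi}$ is a Galois subextension of $L/K$ with $\Gal(M/K)\cong Q$; as recalled in Section \ref{gene2}, a non-log upper ramification break of a Galois subextension is again a non-log upper ramification break of the total extension. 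Hence the non-integral break $u$ of $M/K$ is inherited by $L/K$, which therefore has a non-integral non-log upper ramification break, as desired.

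Once the two reductions are in place the argument is essentially bookkeeping, so I do not anticipate a serious obstacle. The two points that demand care are the verification that $Q$ is a minimal non-abelian $p$-group in the sense of \cite[Section 4]{converse}, and not merely minimal among the quotients of $G$, and the correct identification of $M$ as the fixed field $L^{\ker\phi}$ inside $L$, so that the inheritance of the non-integral ramification break from $M/K$ to $L/K$ applies verbatim. Everything else follows directly from Lemma \ref{minimal}, Lemma \ref{embed}, and the classification in \cite[Proposition 4.2]{converse}.
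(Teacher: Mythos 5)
Your proposal is correct and takes essentially the same route as the paper's proof: both pass to a quotient of $G$ isomorphic to $H(n,d)$ or $A(n,d)$ via \cite[Proposition 4.2]{converse}, apply Lemma \ref{minimal} to get a minimal non-abelian extension $M/K$ with a non-integral non-log upper ramification break, then lift to a $G$-extension by Lemma \ref{embed} and inherit the break from the Galois subextension $M/K$. Your additional justification that a minimal quotient of $G$ is minimal among all non-abelian $p$-groups simply makes explicit what the paper delegates to the cited proposition.
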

\begin{proof}
We prove the assertion in a similar way to \cite[Theorem 5.5]{converse}. 
By \cite[Proposition 4.2]{converse}, there exists a quotient $\Gbar=G/H$ of $G$ which is isomorphic to either
$H(n,d)$ or $A(n,d)$ for some $n,d\ge1$.  
By Lemma \ref{minimal}, there exists a $\Gbar$-extension $M/K$ such that the Galois group $\Gal(M/K)$ of $M/K$ is equal to the (wild) inertia group of $M/K$, and that the largest non-log upper ramification break of $M/K$ is not an integer.
By Lemma \ref{embed}, there exists an extension $L/M$ such that $L/K$ is a $G$-extension whose Galois group $\Gal(L/K)$ is equal to the (wild) inertia group of $L/K$.  
Hence $L/K$ has a non-integral non-log upper ramification break.
\end{proof}

\begin{lemma} \label{tr-cyclic}
Let $K$ be a complete discrete valuation field with the residue field $\Kbar$ of residue characteristic $p>0$, and let $L/K$ is a totally ramified cyclic extension of degree $m>0$.
We assume that $m$ is not divisible by $p$.
Then $\Kbar$ contains a primitive $m$-th root of unity.
\end{lemma}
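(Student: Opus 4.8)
The plan is to exhibit an injective character from the cyclic group $\Gal(L/K)\cong C_m$ into $\Kbar^\times$ whose image is cyclic of order exactly $m$, so that a generator of the image is a primitive $m$-th root of unity. Fix a uniformizer $\pi_L$ of $L$ and a generator $\sigma$ of $\Gal(L/K)$. Since every $\tau\in\Gal(L/K)$ is an isometry, $\tau(\pi_L)$ is again a uniformizer, so $\tau(\pi_L)/\pi_L\in\OO_L^\times$; because $L/K$ is totally ramified we have $\Lbar=\Kbar$, and I would reduce this unit modulo $\mathfrak{m}_L$ to define $\chi(\tau)=\overline{\tau(\pi_L)/\pi_L}\in\Kbar^\times$.

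First I would check that $\chi\colon\Gal(L/K)\to\Kbar^\times$ is a group homomorphism. The key observation is that, $L/K$ being totally ramified, each $\tau$ induces the identity on the residue field $\Lbar=\Kbar$, so $\overline{\tau(a)}=\overline{a}$ for every $a\in\OO_L$. Writing $(\tau'\tau)(\pi_L)/\pi_L=(\tau'(\pi_L)/\pi_L)\cdot\tau'\bigl(\tau(\pi_L)/\pi_L\bigr)$ and reducing, the second factor reduces to $\overline{\tau(\pi_L)/\pi_L}=\chi(\tau)$ by triviality of the residual action, which yields $\chi(\tau'\tau)=\chi(\tau')\chi(\tau)$.

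Next I would prove that $\chi$ is injective, and this is where the tameness hypothesis $p\nmid m$ enters; I expect this to be the crux of the argument. If $\chi(\tau)=1$, then $\tau(\pi_L)/\pi_L\equiv1\pmod{\mathfrak{m}_L}$, so $v_L(\tau(\pi_L)-\pi_L)\ge2$, which means $\tau$ lies in the first ramification group $\Gal(L/K)_1=P(L/K)$. But $L/K$ is tamely ramified since $[L:K]=m$ is prime to $p$, so the wild inertia group $P(L/K)$ is trivial and hence $\tau=1$.

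Finally, an injective homomorphism from $C_m$ into $\Kbar^\times$ has image a cyclic subgroup of $\Kbar^\times$ of order exactly $m$, and a generator $\chi(\sigma)$ of this image is an element of $\Kbar^\times$ of order $m$, that is, a primitive $m$-th root of unity; thus $\Kbar$ contains such a root. The only points requiring care are the two reductions underlying the whole argument, namely that $\Lbar=\Kbar$ and that each $\tau$ acts trivially on residues, but both follow immediately from $L/K$ being totally ramified.
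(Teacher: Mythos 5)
Your proof is correct, but it takes a genuinely different route from the paper's. The paper argues via the multiplicative structure of tame extensions: by \cite[$\mathrm{II}$ PROPOSITION (3.5) (1)]{fesvos} there is a uniformizer $\pi_L$ with $\pi_L^m=\pi_K$, normality of $L/K$ forces the conjugate $\zeta_m\pi_L$ to lie in $L$, hence $\zeta_m\in\OO_L^{\times}$, and reducing modulo the maximal ideal gives a primitive $m$-th root of unity in $\Lbar=\Kbar$ (here $p\nmid m$ guarantees that $X^m-1$ has distinct roots in $\Lbar$, so the reduction of $\zeta_m$ still has exact order $m$). You instead rediscover the canonical tame character $\theta_0\colon I(L/K)\to\Lbar^{\times}$, $\tau\mapsto\overline{\tau(\pi_L)/\pi_L}$ of Serre, {\it Local Fields}, $\mathrm{IV}$ \S 2, Proposition 7: your multiplicativity check correctly exploits that each $\tau$ acts trivially on $\Lbar=\Kbar$, and your injectivity step is legitimate because $\OO_L=\OO_K[\pi_L]$ for a totally ramified extension of complete discrete valuation fields (so $v_L(\tau(\pi_L)-\pi_L)\ge2$ really does mean $\tau\in\Gal(L/K)_1$), because the residue extension is trivial, hence separable, so the classical ramification groups and the identification $\Gal(L/K)_1=P(L/K)$ apply even though $\Kbar$ may be imperfect, and because $p\nmid m$ forces $P(L/K)=1$. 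Comparing the two: your argument is self-contained modulo standard ramification theory, needs only that $\Gal(L/K)_1$ is a $p$-group, and in fact never uses cyclicity of $\Gal(L/K)$ --- an injection of any group of order $m$ into $\Kbar^{\times}$ has cyclic image of order $m$, so your construction simultaneously re-proves that totally tamely ramified Galois extensions are cyclic; the paper's approach buys a stronger intermediate conclusion, namely that $\zeta_m$ lies in $L$ itself and $\sigma(\pi_L)=\zeta_m^{\,i}\pi_L$ holds exactly rather than merely modulo $\mathfrak{m}_L$, at the cost of citing the Kummer-type structure theorem for tame extensions.
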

\begin{proof}
By \cite[$\mathrm{II}$ PROPOSITION (3.5) (1)]{fesvos}, there exists a uniformizer $\pi_L$ in $L$ such that $\pi_L^m=\pi_K$ for some uniformizer $\pi_K$ in $K$.
Put $f(X)=X^m-\pi_K$.
Then $\pi_L$ is a root of $f$, and $\zeta_m\pi_L$ is a conjugate element of $\pi_L$ over $K$, where $\zeta_m$ is a primitive $m$-th root of unity.
Since $f$ is Eisenstein over $K$, $f$ is irreducible over $K$ and we have $\zeta_m\pi_L\in L$.
Hence $\zeta_m\in\OO_L^{\times}$.
Thus $X^m-1\in\Lbar[X]$ splits completely, and $\Kbar=\Lbar$ contains a primitive $m$-th root of unity.
\end{proof}

Let $K$ be a complete discrete valuation field of residue characteristic $p>0$.
In \cite[Definition 3.4]{abbes-saito}, the ramification filtration $(G_K^a, a\in\Q_{\ge0})$ is defined.
In \cite{hattori}, $\FE_{K}^{<j}$ is defined as the category of finite separable extensions whose ramification is bounded by $j$.
Let $L/K$ be a finite separable extension.
The ramification of $L/K$ is {\it bounded by} $j$ if and only if $G_K^j$ of $K$ acts on $\Hom_{K}(L,K^{sep})$ trivially, where $K^{sep}$ is a separable closure of $K$.
If $L/K$ is a Galois extension, the following conditions are equivalent:
\begin{enumerate}[$(1)$]
\item The ramification of $L/K$ is bounded by $j$.
\item $\Gal(L/K)_{\text{n-log}}^j=1$.
\item The largest non-log upper ramification break of $L/K$ is less than $j$.
\end{enumerate}

\begin{lemma}[{cf.\ \cite[Corollary 4.18]{hattori}}]\label{category}
Let $K_1$ and $K_2$ be complete discrete valuation fields with the residue fields $k_1$ and $k_2$ of residue characteristic $p>0$, respectively.
Let $\e(K_i)$ be the absolute ramification index of $K_i$ if $\ch(K_i)=0$ and an arbitrary positive integer if $\ch(K_i)=p$.
Let $j$ be a positive rational number satisfying $j\le\min_i\e(K_i)$.
Suppose that $k_1$ and $k_2$ are isomorphic to each other.
Then there exists an equivalence of categories \[F_j\colon\FE_{K_1}^{<j}\to\FE_{K_2}^{<j}.\]
In particular, if $L_1/K_1$ is a finite Galois extension whose ramification is bounded by $j$, then $L_2:=F_j(L_1)$ satisfies the following properties:
\begin{enumerate}[$(1)$]
\item $L_2/K_2$ is a finite Galois extension.
\item $\Gal(L_2/K_2)$ is isomorphic to $\Gal(L_1/K_1)$.
\item $u\ge0$ is a non-log upper ramification break of $L_2/K_2$ if and only if $u$ is a non-log upper ramification break of $L_1/K_1$.
\end{enumerate}
\end{lemma}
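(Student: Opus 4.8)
The plan is to obtain the equivalence $F_j$ directly from \cite[Corollary 4.18]{hattori}: the hypotheses there---isomorphic residue fields $k_1\cong k_2$ and a bound $j$ with $j\le\min_i\e(K_i)$---are exactly the ones imposed here, so the existence of $F_j\colon\FE_{K_1}^{<j}\to\FE_{K_2}^{<j}$ needs no further argument. The content to be supplied is the transport of the three properties $(1)$--$(3)$ across this equivalence, and I would treat $(1)$, $(2)$ by purely categorical means and $(3)$ by combining the equivalence with the description of the non-log filtration recalled before the statement.

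For $(1)$ and $(2)$ I would first note that $\FE_{K}^{<j}$ is closed under forming composita and Galois closures (the ramification bound is preserved), so all relevant invariants are internal to the category. The degree is then categorical: for $L$ in the category one has $[L:K]=\max_{L'}|\Hom_{K}(L,L')|$, the maximum being attained at the Galois closure of $L$, and $L/K$ is Galois precisely when this maximum equals $|\Aut_K(L)|=|\Hom_K(L,L)|$. Since an equivalence of categories induces bijections on all $\Hom$-sets and an isomorphism of automorphism monoids, $F_j$ preserves the degree, carries Galois extensions to Galois extensions, and yields $\Gal(L_2/K_2)=\Aut_{K_2}(L_2)\cong\Aut_{K_1}(L_1)=\Gal(L_1/K_1)$. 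This gives $(1)$ and $(2)$ at once.

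For $(3)$ the key observation is that the entire non-log filtration of a Galois extension is recoverable from boundedness data of its quotient extensions. Concretely, for a normal subgroup $H\le\Gal(L/K)$ the compatibility $\Gal(L^H/K)_{\text{n-log}}^{j'}=\Gal(L/K)_{\text{n-log}}^{j'}H/H$ together with the equivalence of the three conditions stated before the lemma shows that $L^H/K$ has ramification bounded by $j'$ if and only if $\Gal(L/K)_{\text{n-log}}^{j'}\subseteq H$; hence $\Gal(L/K)_{\text{n-log}}^{j'}$ is the smallest normal subgroup $H$ for which $L^H/K$ is bounded by $j'$. Under $F_j$ the poset of subextensions together with the isomorphism of $(2)$ identifies the normal subgroups of $\Gal(L_1/K_1)$ with those of $\Gal(L_2/K_2)$ and matches $L_1^H$ with $L_2^H=F_j(L_1^H)$. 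Applying \cite[Corollary 4.18]{hattori} also with each rational $j'\le j$ in place of $j$, and using that these equivalences are compatible with the full inclusions $\FE^{<j'}\subseteq\FE^{<j}$, one gets that $L_1^H/K_1$ is bounded by $j'$ if and only if $L_2^H/K_2$ is. Therefore $\Gal(L_1/K_1)_{\text{n-log}}^{j'}$ and $\Gal(L_2/K_2)_{\text{n-log}}^{j'}$ correspond under the isomorphism of $(2)$ for every $j'\le j$; since $L_1/K_1$ is bounded by $j$ all its breaks lie in $[0,j)$, so the jumps of the two filtrations coincide and $(3)$ follows.

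The main obstacle is precisely the compatibility of the equivalences in the bound, i.e.\ that $F_j$ restricts (up to natural isomorphism) to the $F_{j'}$ for $j'\le j$, so that ``bounded by $j'$'' is an intrinsic property preserved by $F_j$. This is what makes the individual ramification breaks---and not merely the single bound $j$---visible through the equivalence; I expect it to follow from the uniformity of Hattori's construction, but it is the point that must be verified carefully, rather than the categorical bookkeeping underlying $(1)$--$(3)$.
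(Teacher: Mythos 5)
Your proposal is correct and follows essentially the same route as the paper: closure of $\FE_{K}^{<j}$ under Galois closure plus full faithfulness of $F_j$ acting on $\Hom$-sets for $(1)$--$(2)$, and transport of ``ramification bounded by $j'$'' through the compatible family of equivalences for $(3)$ (the paper phrases this as comparing largest breaks of the corresponding quotient extensions, which is the same mechanism as your smallest-normal-subgroup characterization of $\Gal(L/K)_{\text{n-log}}^{j'}$). The compatibility you flag as the main obstacle is exactly part of what \cite[Corollary 4.18]{hattori} asserts---the paper invokes it as showing both the existence of $F_j$ and that $F_{j'}$ is the restriction of $F_j$ for every positive rational $j'\le j$---so no further verification is needed.
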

\begin{proof}
\cite[Corollary 4.18]{hattori} shows the existence of $F_j$ and that $F_u\colon\FE_{K_1}^{<u}\to\FE_{K_2}^{<u}$ is the restriction of $F_j$ for any positive rational number $u\le j$.
We assume that $L_1/K_1$ is a finite Galois extension whose ramification is bounded by $j$.

(1) Let $\tilde{L_2}$ be the Galois closure of $L_2/K_2$.
Since the ramification of $L_2/K_2$ is bounded by $j$, $G_{K_2}^j$ acts on $\Hom_{K_2}(L_2,\tilde{L_2})$ trivially.
Hence $G_{K_2}^j$ acts on $\Hom_{K_2}(\tilde{L_2},\tilde{L_2})$ trivially, and the ramification of $\tilde{L_2}/K_2$ is bounded by $j$.
Thus there exists a finite separable extension $\tilde{L_1}/K_1$ whose ramification is bounded by $j$ such that $F_j(\tilde{L_1})\cong\tilde{L_2}$.
Since $F_j$ is full and faithful, $\Hom_{K_1}(L_1,\tilde{L_1})\to\Hom_{K_2}(L_2,\tilde{L_2})$ and $\Hom_{K_1}(L_1,L_1)\to\Hom_{K_2}(L_2,L_2)$ are bijective.
Since $L_1/K_1$ is a Galois extension, there exists a bijection $\Hom_{K_1}(L_1,L_1)\to\Hom_{K_1}(L_1,\tilde{L_1})$.
Therefore, we have $\Hom_{K_2}(L_2,\tilde{L_2})=\Hom_{K_2}(L_2,L_2)$, and $L_2/K_2$ is a Galois extension.

(2) The map $\Aut_{K_1}(L_1)=\Hom_{K_1}(L_1,L_1)\to\Hom_{K_2}(L_2,L_2)=\Aut_{K_2}(L_2)$ induced by $F_j$ is a group homomorphism.
Since $F_j$ is full and faithful, this homomorphism is bijective.

(3) Since the non-log upper ramification groups are compatible with quotients, it suffices to show that the largest non-log upper ramification break of $L_1/K_1$ is equal to the largest non-log upper ramification break of $L_2/K_2$.
Let $u_i$ be the largest non-log upper ramification break of $L_i/K_i$ for $i=1,2$.
Since the ramification of $L_1/K_1$ is bounded by $j$, we have $u_i<j$.
For any rational number $r$ satisfying $u_i<r\le j$, the ramification of $L_1/K_1$ is bounded by $r$.
Since $F_r$ is the restriction of $F_j$, the ramification of $L_2/K_2$ is bounded by $r$, and we have $u_2<r$.
Therefore, we have $u_2\le u_1$.
We can show that $u_1\le u_2$ in a similar way.
\end{proof}


\begin{thebibliography}{99}
\bibitem{abbes-saito} A. Abbes and T. Saito, Ramification of local fields with imperfect residue fields, {\it Amer.\ J.\ Math.}  124 (2002), no. 5, 879--920.

\bibitem{asch} M. Aschbacher, {\it Finite group theory}, Cambridge University Press, 1986.

\bibitem{converse} G. G. Elder and K. Keating, A converse to the Hasse-Arf theorem, {\it Proc.\ Amer.\ Math.\ Soc.\ Ser.\ B} 10 (2023), 326--340.

\bibitem{feshas} I. B. Fesenko, Hasse-Arf property and abelian extensions, {\it Math.\ Nachr.} 174 (1995), 81--87.

\bibitem{fesvos} I. B. Fesenko and S. V. Vostokov, {\it Local fields and their extensions}, With a foreword by I. R. Shafarevich, Second edition, Transl.\ Math.\ Monogr., 121 American Mathematical Society, Providence, RI, 2002. xii+345 pp.

\bibitem{fingro} D. Gorenstein, {\it Finite groups}, Second edition, Chelsea, New York, 1980.

\bibitem{hattori} S. Hattori, Ramification theory and perfectoid spaces, {\it Compos. Math.} 150 (2014), no. 5, 798--834.

\bibitem{illusie} L. Illusie, {\it Complexe cotangent et d\'eformations I}, Springer Lecture Notes in Math., 239, Springer-Verlag, Berlin, Heidelberg, New York 1971.

\bibitem{graded} T. Saito, Graded quotients of ramification groups of local fields with imperfect residue fields, {\it Amer.\ J.\ Math.} 145 (2023), no. 5, 1389--1464.

\bibitem{ser} J.-P. Serre, {\it Local fields}, Graduate Texts in Mathematics 67, Springer, 1979.

\bibitem{witK} E. Witt, Konstruktion von galoisschen K\"{o}rpen der Charakteristik $p$ zu vorgegebener Gruppe
der Ordnung $p^f$, {\it J. Reine Angew. Math.} 174 (1936), 237--245.

\bibitem{witZ} E. Witt, Zyklische K\"orper und Algebren der Charakteristik $p$ vom Grad $p^n$. Struktur diskret bewerteter perfekter K\"orper mit vollkommenem Restklassenk\"orper der Charakteristik $p$, {\it J. Reine Angew. Math.} 176 (1937), 126--140.

\end{thebibliography}
\end{document}